\documentclass[twoside, 14pt]{article}
\usepackage{amsmath,amssymb,amsthm,mathrsfs,dsfont}
\usepackage[margin=2.0cm]{geometry} 
\usepackage{lipsum}
\usepackage{titlesec,hyperref}
\usepackage{fancyhdr}
\usepackage[numbers,sort&compress]{natbib}
\usepackage{color}
\usepackage[titletoc]{appendix}

\pagestyle{fancy}
\fancyhf{}
\fancyhead[CO]{\footnotesize\it
Optimal decay of CNS equations with or without potential force}
\fancyhead[CE]{\footnotesize\it J.C.Gao, M.L.Li, Z.A.Yao}
\fancyfoot[CE,CO]{\footnotesize\rm\thepage}
\fancypagestyle{plain}
{
\fancyhf{}

}

\linespread{1.1}

\titleformat{\subsection}{\it}{\thesubsection.\enspace}{1.5pt}{}
\titleformat{\subsubsection}{\it}{\thesubsubsection.\enspace}{1.5pt}{}

\newtheorem{theo}{Theorem}[section]
\newtheorem{lemm}[theo]{Lemma}

\newtheorem{prop}[theo]{Proposition}
\newtheorem{rema}[theo]{Remark}
\numberwithin{equation}{section}
\def\bel{\begin{equation}\label}

\def\eeq{\end{equation}}

\newcommand\R{{\mathbb R}}
\newcommand\wf{{\widetilde f}}
\newcommand\wg{{\widetilde g}}
\newcommand\h{{\widetilde h}}
\newcommand{\al}{{\alpha}}
\newcommand{\be}{{\beta}}
\newcommand{\na}{{\nabla}}
\newcommand{\pa}{{\partial}}

\newcommand{\beq}{\begin{equation}}
\newcommand{\beno}{\begin{equation*}}
\newcommand{\eeno}{\end{equation*}}

\let\pa=\partial
\let\al=\alpha

\let\d=\delta

\let\lam=\lambda

\let\s=\sigma
\let\f=\frac

\let\tri=\triangle
\let\ep=\epsilon
\def\na{\nabla}
\def\dive{\mathop{\rm div}\nolimits}

\begin{document}
\title{Optimal decay of compressible Navier-Stokes equations with or without potential force \hspace{-4mm}}
\author{Jincheng Gao$^\dag$ \quad Minling Li$^\ddag$ \quad Zheng-an Yao$^\sharp$ \\[10pt]
\small {School of Mathematics, Sun Yat-Sen University,}\\
\small {510275, Guangzhou, P. R. China}\\[5pt]
}

\footnotetext{Email: \it $^\dag$gaojch5@mail.sysu.edu.cn,
\it $^\ddag$limling3@mail2.sysu.edu.cn,
\it $^\sharp$mcsyao@mail.sysu.edu.cn}
\date{}

\maketitle

\begin{abstract}
In this paper, we investigate the optimal decay rate for the higher order spatial derivative of global solution to the compressible Navier-Stokes (CNS) equations with or without potential force in three-dimensional whole space.
First of all, it has been shown in \cite{guo2012} that the $N$-th order spatial derivative of global small solution of the CNS equations without potential force tends to zero with the $L^2-$rate $(1+t)^{-(s+N-1)}$ when the initial perturbation around the constant equilibrium state belongs to $H^N(\mathbb{R}^3)\cap \dot H^{-s}(\mathbb{R}^3)(N \ge 3 \text{~and~} s\in [0, \frac32))$.
Thus, our first result improves this decay rate to $(1+t)^{-(s+N)}$.
Secondly, we establish the optimal decay rate for the global small solution of the CNS equations with potential force as time tends to infinity.
These decay rates for the solution itself and its spatial derivatives are really optimal since the upper bounds of decay rates coincide with the lower ones.

\vspace*{5pt}
\noindent{\it {\rm Keywords}}: Compressible Navier-Stokes equations; potential force; optimal decay rates; negative Sobolev space.

\vspace*{5pt}
\noindent{\it {\rm 2020 Mathematics Subject Classification:}}\ {\rm 35Q30, 35Q35, 35B40}
\end{abstract}

\tableofcontents

\section{Introduction}
In this paper, we are concerned with the optimal decay rate
of global small solution to the Cauchy problem for the compressible Navier-Stokes (CNS) equations with and without external force in three-dimensional whole space.
Thus, our first result is to investigate the optimal decay rate
for the CNS equations without external force as follows:
\beq\label{ns1}
\left\{\begin{array}{lr}
	\rho_t +\dive(\rho u)=0,\quad (t,x)\in \mathbb{R}^{+}\times \mathbb{R}^3,\\
	(\rho u)_t+\dive(\rho u \otimes u)+ \na p-\mu\tri u-(\mu+\lam)\na\dive u = 0,
	\quad (t,x)\in \mathbb{R}^{+}\times \mathbb{R}^3,
\end{array}\right.
\eeq
where $\rho$, $u$ and $p$ represent the unknown density, velocity and
pressure, respectively.
The initial data is given by
\beq\label{initial1}
(\rho,u)|_{t=0}=(\rho_0,u_0)(x),\quad x\in  \mathbb{R}^3.
\eeq
Furthermore, as the space variable tends to infinity, we assume
\beq\label{boundary1}
\lim\limits_{|x|\rightarrow+\infty}(\rho,u)=(\bar \rho,0),
\eeq
where $\bar\rho$ is a positive constant.
The pressure $p(\rho)$ here is assumed to be a smooth function
in a neighborhood of $\bar\rho$ with $p'(\bar\rho)>0$.
The constant viscosity coefficients $\mu$ and $\lambda$ satisfy the physical conditions\beq\label{physical-condition}
\mu>0,~~~~2\mu+3\lambda\geq 0. \eeq
The CNS system \eqref{ns1} is a well-known model which describes
the motion of compressible fluid.
In the following, we will introduce some mathematical results related to the CNS equations, including the local and global-in-time well-posedness, large time behavior and so on.

When the initial data are away from the vacuum, the local existence and
uniqueness of classical solutions have been obtained in \cite{{Serrin-1959},{Nash-1962}}.
If the initial density may vanish in open sets, the well-posedness theory also has been studied in \cite{{Cho-Choe-Kim-2004},{Choe-Kim-2003},{Cho-Kim-2006},{Li-Liang-2014},{Salvi-Straskraba-1993}} when the initial data satisfy some compatibility conditions.
The global classical solution was first obtained by Matsumura and Nishida \cite{Matsumura-Nishida-1980} as initial data is closed to a non-vacuum
equilibrium in some Sobolev space $H^s$.
This celebrated result requires that the solution has small oscillation from a uniform non-vacuum state such that the density is strictly away from vacuum.
For general data, one has to face a tricky problem of the possible appearance of vacuum.
As observed in \cite{Li2019,rozanova2008blow,Xin1998,Xin2013}, the strong (or smooth) solution for the CNS equations will blow up in finite time.
In order to solve this problem, it is important to study some blow-up criteria of strong solutions, refer to \cite{Huang2011blowcriter1,{Sun-Wang-Zhang-2011},
Huang2011blowcriter2,Wen2013}.
It is worth noting that the bounds established by the above papers are dependent on time.
Thus, under the assumption that $\sup_{t\in \mathbb R^+}\|\rho(t, \cdot)\|_{C^\alpha}\le  M$ for some $0<\alpha<1$, He, Huang and Wang \cite{he-huang-wang} proved global stability of large solution and built the decay rate for the global solution as it tends to the
constant equilibrium state.
Later, Gao, Wei and Yao studied the optimal decay rate for this class of
global large solution itself and its derivatives
in a series of articles \cite{{gao-wei-yao-D},{gao-wei-yao-NA},{gao-wei-yao-pre}}.
In the presence of vacuum, Huang, Li and Xin\cite{Huang-Li-Xin-2012} established the global existence and uniqueness of strong solution for the CNS equations in three-dimensional space in the condition that the initial energy is small.
Recently, Li and Xin \cite{Li-Xin-2019-pde} obtained similar results for the dimension two,
in addition, they also studied the large time behavior of the solution for the CNS system with small initial data but allowing large oscillations.
Some other related results with respect to the global well-posedness theory can be found in \cite{Li2020,Wen2017}.

The large time behavior of the solutions to the isentropic compressible Navier-Stokes system has been studied extensively.
The optimal decay rate for strong solution to the CNS system was first derived by Matsumura and Nishida \cite{nishida2}, and later by Ponce \cite{ponce} for the optimal $L^p(p\ge 2)$ decay rate.
With the help of the study of Green function, the optimal $L^p$ ($1\le p\le\infty$) decay rates in $\mathbb R^n(n\ge 2)$ were obtained in \cite{{hoff-zumbrun},{liu-wang}} when the small initial perturbation bounded in $H^s\cap L^1$ with the integer $s\ge[n/2]+3$.
All these decay results mentioned above are restricted to the perturbation framework, that is, if the initial data is a small perturbation of constant equilibrium in $L^1\cap H^3$,
the decay rate of global solution to system \eqref{ns1} in $L^2$-norm is
$$\|\rho(t)-\bar\rho\|_{L^2}+\|u (t)\|_{L^2}\le C(1+t)^{-\frac34}.$$
Furthermore, Gao, Tao and Yao \cite{gao2016} applied the Fourier splitting method,
developed by Schonbek \cite{Schonbek1985}, to establish optimal decay rate for
the higher-order spatial derivative of global small solution.
Specially, they built the decay rate as follows:
\begin{equation}\label{Decay-Gao-Tao-Yao}
\|\nabla^k(\rho-\bar\rho)(t)\|_{H^{N-k}}
+\|\nabla^k u(t)\|_{H^{N-k}}
\leq C_0 (1+t)^{-\f34-\f k2},\quad \text{for}~~ 0\leq k\leq N-1,
\end{equation}
if the initial perturbation belongs to $H^N(\mathbb{R}^3)\cap L^1(\mathbb{R}^3)$.
Obviously, the decay rate for the $N-$th order spatial derivative of global solution
in \eqref{Decay-Gao-Tao-Yao} is still not optimal.
\textbf{Recently, this tricky problem is addressed simultaneously in a series of articles
\cite{{chen2021},{Wang2020},{wu2020}} by using the spectrum analysis of
the linearized part.}
In the perturbation setting, the approach to proving the decay estimate for the solution of the CNS system relies heavily on the analysis of the linearization of the system.
More precisely, most of these decay results were proved by combining the linear optimal decay of spectral analysis with the energy method.
From another point of view, under the assumption that the initial perturbation is bounded in $\dot H^{-s}(s\in [0, \frac32))$, Guo and Wang \cite{guo2012} obtained the optimal decay rate of the solution and its spatial derivatives of system \eqref{ns1} under the $H^N(N\ge 3)-$framework by using pure energy method.
More precisely, they established the following decay estimate
\beq\label{Decay-Guo}
\|\nabla^l (\rho-\bar{\rho})(t)\|_{H^{N-l}}
+\|\nabla^l u(t)\|_{H^{N-l}}\le C_0(1+t)^{-(l+s)}, ~{\rm for~}-s< l \le N-1.
\eeq
This method in \cite{guo2012} mainly combined the energy estimates with the interpolation between negative and positive Sobolev norms, and hence do not use the analysis of the linearized part.
From the decay rate of \eqref{Decay-Guo}, it is easy to see that the decay rate of $N-th$ order derivative of solution $(\rho-\bar{\rho}, u)$ coincides with the lower one.
However, the $N-th$ order spatial derivative of heat equation has the optimal decay rate $(1+t)^{-(N+s)}$ rather than $(1+t)^{-(N-1+s)}$(see Theorem $1.1$ in \cite{guo2012}).
\textbf{\textit{Thus, the first purpose in this paper is to investigate the optimal decay rate for the quantity $\nabla^N (\rho, u)$ as it converges to zero in $L^2-$norm.}}

Now, we state the result of decay rate for the CNS equations that has been established before.

\begin{prop}\label{hs1}(\cite{guo2012})
Assume that $(\rho_0-\bar\rho,u_0)\in H^{N}$ for an integer $N\geq 3$.
Then there exists a constant $\delta_0$ such that if
	\[\|(\rho_0-\bar\rho,u_0)\|_{H^{[\f N2]+2}}\leq \delta_0, \]
	then the problem \eqref{ns1}--\eqref{boundary1} admits a unique global solution $(\rho, u)$
    satisfying that for all $t \ge 0$
	\begin{equation}\label{energy-01}
	\|(\rho-\bar\rho)(t)\|_{H^m}^2
    +\|u(t)\|_{H^m}^2
    +\int_0^t(\|\nabla\rho\|_{H^{m-1}}^2+\|\nabla u\|_{H^m}^2)d\tau
    \leq C (\|\rho_0-\bar\rho \|_{H^m}^2+\|u_0\|_{H^m}^2),
	\end{equation}
    where $[\f N2]+2\leq m\leq N$.
    If further, $(\rho_0-\bar\rho,u_0)\in \dot H^{-s}$ for some
    $s \in [0, \f32)$, then for all $t \ge 0$
		\begin{equation}
		\|\Lambda^{-s}(\rho-\bar\rho)(t)\|_{L^2}^2
        +\|\Lambda^{-s}u (t)\|_{L^2}^2\leq C_0,
		\end{equation}
		and
		\begin{equation}\label{sde1}
		\|\nabla^l(\rho-\bar\rho)(t)\|_{H^{N-l}}^2+\|\nabla^l u(t)\|_{H^{N-l}}^2
        \leq C_0 (1+t)^{-(l+s)}, {~\rm for~}-s<l \le N-1.
		\end{equation}
\end{prop}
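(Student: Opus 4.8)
The statement is classical, so I would reproduce the pure energy argument of Guo and Wang. First I would pass to the perturbation variables $\varrho=\rho-\bar\rho$ and $u$: dividing the momentum equation by $\rho$ turns \eqref{ns1} into a hyperbolic--parabolic system $\pa_t\varrho+\bar\rho\,\dive u=-\dive(\varrho u)$, $\pa_t u+\f{p'(\bar\rho)}{\bar\rho}\na\varrho-\f{\mu}{\bar\rho}\tri u-\f{\mu+\lam}{\bar\rho}\na\dive u=\mathcal N(\varrho,u)$, where $\mathcal N$ gathers the quadratic-and-higher terms $-u\cdot\na u$, $-h(\varrho)\bigl(\mu\tri u+(\mu+\lam)\na\dive u\bigr)$ and $-g(\varrho)\na\varrho$, with $h,g$ smooth and vanishing at the origin. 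Local well-posedness in $C([0,T];H^N)$ is standard (contraction in the energy space), so the whole content lies in the a priori estimates.

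For the energy estimate \eqref{energy-01} I would apply $\na^k$ to both equations and pair with $\na^k\varrho$ (weighted by $p'(\bar\rho)/\bar\rho$) and $\na^k u$; the viscosity yields the velocity dissipation $\|\na u\|_{H^m}^2$, while the density dissipation $\|\na\varrho\|_{H^{m-1}}^2$ is recovered by the usual device of pairing $\na^k$ of the momentum equation with $\na^{k+1}\varrho$ and eliminating $\langle\na^k\pa_t u,\na^{k+1}\varrho\rangle$ through the continuity equation, producing $\f{d}{dt}\langle\na^k u,\na^{k+1}\varrho\rangle+c\|\na^{k+1}\varrho\|^2\lesssim\|\na^{k+1}u\|^2+(\text{nonlinear})$. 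A combination $\mathcal E_m\sim\|(\varrho,u)\|_{H^m}^2+\eta\sum_{k\le m-1}\langle\na^k u,\na^{k+1}\varrho\rangle$ with $\eta$ small then satisfies $\f{d}{dt}\mathcal E_m+c\bigl(\|\na\varrho\|_{H^{m-1}}^2+\|\na u\|_{H^m}^2\bigr)\le C\|(\varrho,u)\|_{H^{[\f N2]+2}}\bigl(\|\na\varrho\|_{H^{m-1}}^2+\|\na u\|_{H^m}^2\bigr)$, the nonlinear terms being handled by Moser/Kato--Ponce commutator estimates and Sobolev embedding. Taking $\delta_0$ small enough to absorb the right-hand side, a continuity argument closes the bound first at $m=[\f N2]+2$ and then propagates it to every $[\f N2]+2\le m\le N$, giving the global solution and \eqref{energy-01}.

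Next I would propagate the negative norm: applying $\Lam^{-s}$ and pairing with $\Lam^{-s}\varrho$ (weighted) and $\Lam^{-s}u$, the linear part gives $\f12\f{d}{dt}\bigl(\f{p'(\bar\rho)}{\bar\rho}\|\Lam^{-s}\varrho\|^2+\|\Lam^{-s}u\|^2\bigr)+\f{\mu}{\bar\rho}\|\Lam^{-s}\na u\|^2+\cdots$, and the nonlinear terms $\langle\Lam^{-s}\mathcal N,\Lam^{-s}(\varrho,u)\rangle$ are controlled by the Hardy--Littlewood--Sobolev bound $\|\Lam^{-s}F\|_{L^2}\lesssim\|F\|_{L^{6/(3+2s)}}$ combined with Hölder and embedding when $s\in[0,\f12]$, and by a product estimate of the form $\|\Lam^{-s}(fg)\|_{L^2}\lesssim\|f\|_{\dot H^{s_1}}\|g\|_{\dot H^{s_2}}$ with $s_1+s_2=\f32-s$ when $s\in(\f12,\f32)$; in all cases they are dominated by a time-integrable factor built from $\|\na(\varrho,u)\|_{H^{N-1}}^2$ times $\|\Lam^{-s}(\varrho,u)\|_{L^2}$, so Grönwall together with \eqref{energy-01} gives the uniform bound $C_0$.

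Finally I would deduce \eqref{sde1}. For $0\le l\le N-1$ the localized version of the energy estimate yields a Lyapunov inequality $\f{d}{dt}\mathcal E^N_l+c\,\mathcal D^N_l\le0$ with $\mathcal E^N_l\sim\sum_{l\le k\le N}\|\na^k(\varrho,u)\|^2$ and $\mathcal D^N_l\gtrsim\sum_{l+1\le k\le N}\|\na^k(\varrho,u)\|^2=\mathcal E^N_{l+1}$. The interpolation inequality $\|\na^l f\|_{L^2}\lesssim\|\Lam^{-s}f\|_{L^2}^{1/(l+s+1)}\|\na^{l+1}f\|_{L^2}^{(l+s)/(l+s+1)}$ together with $\|\Lam^{-s}(\varrho,u)\|_{L^2}\le C_0$ and the smallness of the higher modes upgrades this to $\mathcal E^N_l\lesssim(\mathcal E^N_{l+1})^{(l+s)/(l+s+1)}$, hence $\mathcal D^N_l\gtrsim C_0^{-2/(l+s)}(\mathcal E^N_l)^{1+1/(l+s)}$, so that $\f{d}{dt}\mathcal E^N_l+C_1(\mathcal E^N_l)^{1+1/(l+s)}\le0$ and this differential inequality gives $\mathcal E^N_l(t)\le C_0(1+t)^{-(l+s)}$; the range $-s<l<0$ then follows by interpolating the bounded $\dot H^{-s}$ norm against the $l=0$ decay. \textbf{The two delicate points} are the negative-norm propagation for $s\in(\f12,\f32)$, where the crude product estimate does not suffice and one must bootstrap using the algebraic decay of $\|\na(\varrho,u)\|_{L^2}$ obtained along the way, and the lack of smoothing in the density equation: since the density dissipation sits one derivative below the velocity dissipation, $\mathcal D^N_l$ fails to dominate $\mathcal E^N_{l+1}$ once $l=N$, which is exactly what confines \eqref{sde1} to $l\le N-1$.
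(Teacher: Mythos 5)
The paper does not prove this proposition: it is quoted verbatim from Guo--Wang \cite{guo2012}, and your sketch is a correct and faithful reconstruction of that pure-energy argument — the cross-term device $\langle\na^k u,\na^{k+1}\varrho\rangle$ for the density dissipation, the $\dot H^{-s}$ propagation via Hardy--Littlewood--Sobolev with the bootstrap for $s\in(\tfrac12,\tfrac32)$, and the interpolation-driven Lyapunov inequality $\f{d}{dt}\mathcal E^N_l+C(\mathcal E^N_l)^{1+1/(l+s)}\le 0$. So you take essentially the same route as the cited source, and your closing remark on why the method stalls at $l=N$ matches the obstruction the present paper describes in Section \ref{approach}.
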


Now, we state the first result, concerning the optimal decay rate of $N-th$ order spatial derivative of the solution $(\rho-\bar\rho,u)$ of problem \eqref{ns1}--\eqref{boundary1}.

\begin{theo}\label{hs2}
	Under all the assumptions in Proposition \ref{hs1}, then the global solution
    $(\rho, u)$ of problem \eqref{ns1}--\eqref{boundary1} admits the decay estimate for all $t\ge 0$
	\beq\label{sde3}
		\|\nabla^N(\rho-\bar\rho)(t)\|_{L^2}^2
        +\|\nabla^N u(t)\|_{L^2}^2\leq C_0 (1+t)^{-(s+N)}.
	\eeq
	where $C_0$ is a constant independent of time.
\end{theo}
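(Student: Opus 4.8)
The plan is to close the gap between the $(1+t)^{-(N-1+s)}$ decay in Proposition \ref{hs1} and the desired $(1+t)^{-(N+s)}$ by combining a time-weighted energy estimate at the top order with the Fourier splitting technique. First I would write the perturbation equations for $\sigma=\rho-\bar\rho$ and $u$ in the standard form
\beno
\sigma_t+\bar\rho\,\dive u=g_1,\qquad u_t-\mu\tri u/\bar\rho-(\mu+\lam)\na\dive u/\bar\rho+(p'(\bar\rho)/\bar\rho)\na\sigma=g_2,
\eeno
where $g_1,g_2$ collect the quadratic (and higher) nonlinear terms. Applying $\na^N$, pairing with $(\na^N\sigma,\na^N u)$ in $L^2$, and using the standard symmetrizer $p'(\bar\rho)/(\bar\rho^2)$ on the density block, I obtain an energy-dissipation inequality of the form
\beno
\f{d}{dt}\mathcal{E}_N(t)+c\|\na^{N+1}u\|_{L^2}^2\le C\big(\text{nonlinear terms}\big),
\eeno
where $\mathcal{E}_N(t)\approx\|\na^N\sigma\|_{L^2}^2+\|\na^N u\|_{L^2}^2$. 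The crucial missing piece is the dissipation of $\na^N\sigma$ itself; as usual this is recovered by the auxiliary functional $\int\na^{N-1}u\cdot\na^N\sigma\,dx$, whose time derivative produces $+c'\|\na^N\sigma\|_{L^2}^2$ at the cost of $\|\na^N u\|_{L^2}^2$ and nonlinear remainders. Forming $\mathcal{H}_N(t):=\mathcal{E}_N(t)+\eta\int\na^{N-1}u\cdot\na^N\sigma\,dx$ for small $\eta>0$ yields
\beno
\f{d}{dt}\mathcal{H}_N(t)+c\big(\|\na^N\sigma\|_{L^2}^2+\|\na^{N+1}u\|_{L^2}^2\big)\le C\,(\text{nonlinear terms}).
\eeno

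The second step is to control the nonlinear terms using the already-established decay \eqref{sde1} for all derivatives up to order $N-1$. Each term in $g_1,g_2$ after applying $\na^N$ is a product; by Leibniz and the $L^\infty$–$L^2$ (or $L^3$–$L^6$) Hölder splitting together with Gagliardo–Nirenberg, the worst contributions take the schematic form $\|\na^N(\rho,u)\|_{L^2}\cdot\|\na^{\le N-1}(\rho,u)\|_{(\text{lower norms})}$, which by \eqref{sde1} and the smallness of $\delta_0$ can be bounded by (small constant)$\times\|\na^N\sigma\|_{L^2}^2+$ (small)$\times\|\na^{N+1}u\|_{L^2}^2$ plus genuinely faster-decaying terms like $(1+t)^{-(2N-1+2s)}$ or $(1+t)^{-\beta}$ with $\beta>N+s$. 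One must be slightly careful with the top-derivative term on $\sigma$ hitting $\na\dive u$ inside $g_2$, but the $(\mu+\lam)$-structure and integration by parts absorb it into the $\|\na^{N+1}u\|_{L^2}^2$ dissipation. After these estimates, the differential inequality reads
\beno
\f{d}{dt}\mathcal{H}_N(t)+C_1\mathcal{H}_N(t)\le C_2(1+t)^{-(N+s)-1}+(\text{faster}),
\eeno
using that $\|\na^{N+1}u\|_{L^2}^2$ and $\|\na^N\sigma\|_{L^2}^2$ together dominate $\mathcal{H}_N(t)$ up to the $\|\na^N u\|_{L^2}^2$ piece — which is precisely where the Fourier splitting enters.

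The third step handles the low-frequency part of $\|\na^N u\|_{L^2}^2$ (and $\|\na^N\sigma\|_{L^2}^2$) that the pure energy dissipation cannot see. Following Schonbek's method, I split frequency space at radius $R(t)^2=\beta/(1+t)$ for a suitable constant $\beta$; on the high-frequency region $\|\na^N u\|_{L^2}^2\lesssim R(t)^{-2}\|\na^{N+1}u\|_{L^2}^2$, while on the low-frequency ball I estimate $\int_{|\xi|\le R(t)}|\xi|^{2N}(|\hat\sigma|^2+|\hat u|^2)\,d\xi$ using Duhamel's formula for the linearized system: the linear semigroup contributes $R(t)^{2N+3}\cdot(\text{data in }\dot H^{-s})^2\lesssim(1+t)^{-(N+3/2)}\cdot(1+t)^{\text{?}}$ — more precisely, combining the $|\xi|^{2N}$ weight with the $\dot H^{-s}$ bound of the initial data and the Duhamel nonlinear term (whose spatial integral is controlled by the already-known decay of lower derivatives) gives a low-frequency bound of order $(1+t)^{-(N+s)}$. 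Feeding this back produces
\beno
\f{d}{dt}\mathcal{H}_N(t)+\f{C_3}{1+t}\mathcal{H}_N(t)\le C_4(1+t)^{-(N+s)-1},
\eeno
and multiplying by $(1+t)^{C_3}$ with $C_3$ chosen larger than $N+s$, then integrating, yields $\mathcal{H}_N(t)\le C_0(1+t)^{-(N+s)}$, which gives \eqref{sde3} since $\mathcal{H}_N(t)$ is equivalent to $\|\na^N\sigma\|_{L^2}^2+\|\na^N u\|_{L^2}^2$ for $\eta$ small. The main obstacle I anticipate is the bookkeeping in the low-frequency Duhamel estimate: one must verify that every nonlinear term, after the $|\xi|^{2N}$ weight and the time integration against the decaying linear kernel, decays at least as fast as $(1+t)^{-(N+s)}$ — this requires invoking the full strength of \eqref{sde1} for all orders $l\le N-1$ and carefully distributing derivatives so no single factor is forced to carry order $N$ in an $L^1$-type norm. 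The energy part (steps one and two) is essentially routine given Proposition \ref{hs1}; the delicate point is making the Fourier-splitting low-frequency input consistent with the desired sharp rate.
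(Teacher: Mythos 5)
Your plan founders at the point where you form $\mathcal{H}_N(t)=\mathcal{E}_N(t)+\eta\int\na^{N-1}u\cdot\na^N\sigma\,dx$ and claim it is equivalent to $\|\na^N\sigma\|_{L^2}^2+\|\na^N u\|_{L^2}^2$ for small $\eta$. That equivalence is false: the cross term is bounded only by $\|\na^{N-1}u\|_{L^2}\|\na^N\sigma\|_{L^2}$, and in the whole space $\|\na^{N-1}u\|_{L^2}$ is not controlled by $\|\na^N u\|_{L^2}$. Moreover, the time derivative of the cross term costs $C\|\na^N u\|_{L^2}^2$ (from $\int\na^{N-1}u\cdot\na^N\sigma_t\,dx$ after substituting the continuity equation), with an $O(1)$ constant that cannot be absorbed by the available viscous dissipation $\|\na^{N+1}u\|_{L^2}^2$; Fourier splitting only extracts $\frac{\beta}{1+t}\|\na^N u\|_{L^2}^2$ from that dissipation, which is too weak for large $t$. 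The only way to absorb $\|\na^N u\|_{L^2}^2$ is to include the $(N-1)$-level energy in the functional, and that is exactly what pins the decay of the functional --- hence of $\|\na^N(\sigma,u)\|_{L^2}^2$ --- to the $(N-1+s)$ rate in \cite{guo2012}. Reintroducing the cross term at top order therefore reinstates the very obstruction the theorem is meant to remove. Your low-frequency Duhamel step also has an internal inconsistency: the data here is only in $\dot H^{-s}$ with $s\in[0,\frac32)$, so the linear low-frequency contribution should be counted as $R^{2N+2s}\|\Lambda^{-s}U_0\|_{L^2}^2\sim(1+t)^{-(N+s)}$ rather than the $R^{2N+3}$ ($L^1$-data) count you wrote, and the nonlinear Duhamel piece would have to be measured in $\dot H^{-s}$ rather than $L^1$.

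The paper's proof avoids all of this and never forms the cross term at order $N$. It keeps the top-order estimate as $\frac{d}{dt}\|\na^N(n,u)\|_{L^2}^2+\|\na^{N+1}u\|_{L^2}^2\le C(1+t)^{-1}\|\na^N(n,u)\|_{L^2}^2+C(1+t)^{-(N+1+s)}$, where the $(1+t)^{-1}$ prefactor comes from the decay of the lower-order factors in the nonlinearity via \eqref{sde1}. Multiplying by $(1+t)^{N+s+\ep_0}$, the only troublesome term is $\int_0^t(1+\tau)^{N+s+\ep_0-1}\|\na^N(n,u)\|_{L^2}^2\,d\tau$, and this is bounded by $C(1+t)^{\ep_0}$ using a separately established weighted integration of the $(N-1)$-level energy inequality (Lemma \ref{integrability}), where the cross term lives harmlessly and supplies the time integrability of $\|\na^N n\|_{L^2}^2$ and $\|\na^N u\|_{H^1}^2$ against the weight $(1+\tau)^{N+s+\ep_0-1}$. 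No Fourier splitting or Duhamel representation is needed. If you want to salvage your architecture, drop the cross term from $\mathcal{H}_N$ and import the dissipation of $\na^N n$ only through its time integral, as the paper does.
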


\begin{rema}
The time decay estimate \eqref {sde3}, together with the decay estimate \eqref{sde1}, yields that
\begin{equation}\label{decay-NS}
\|\nabla^l(\rho-\bar\rho)(t)\|_{H^{N-l}}^2
+\|\nabla^l u(t)\|_{H^{N-l}}^2
\leq C_0 (1+t)^{-(l+s)}, {~\rm for~}-s<l \le N.
\end{equation}
Then, the decay estimate \eqref{decay-NS} provides with optimal decay rate
for the global solution $(\rho-\bar\rho, u)$ itself and
its any spatial derivative converging to zero.
Here the decay rate of global solution of original problem \eqref{ns1}--\eqref{boundary1} converging to equilibrium state is called optimal in the sense that
it coincides with the rate of solution of the linearized system.
\end{rema}
\begin{rema}
Combine the decay rate \eqref{Decay-Gao-Tao-Yao} and
the method as the proof of Theorem \ref{hs2}, we also have
\beno
\|\nabla^N(\rho-\bar\rho)(t)\|_{L^2}+\|\nabla^N u(t)\|_{L^2}
\leq C_0 (1+t)^{-\f34-\f N2}.
\eeno
This decay estimate of the $N$-th order spatial derivative is optimal in the sense that it coincides with the rate of solution of linearized part.
\end{rema}

Next, we consider the optimal decay rate of solution for the CNS equations
with potential force in three-dimensional whole space as follows:
\beq\label{ns3}
\left\{\begin{array}{lr}
	\rho_t +\dive(\rho u)=0,\quad (t,x)\in \mathbb{R}^{+}\times \mathbb{R}^3,\\
	(\rho u)_t+\dive(\rho u \otimes u)+ \na p-\mu\tri u-(\mu+\lam)\na\dive u +\rho\na \phi= 0,\quad (t,x)\in \mathbb{R}^{+}\times \mathbb{R}^3,
\end{array}\right.
\eeq
where $\rho$, $u$ and $p$ represent the density, velocity and pressure, respectively. And $-\na\phi$ is the time independent potential force. The constant viscosity coefficients $\mu$ and $\lambda$ satisfy the physical conditions \eqref{physical-condition}.
The initial data
\beq\label{initial-boundary}
(\rho,u)|_{t=0}=(\rho_0,u_0)(x)\rightarrow (\rho_{\infty},0),\quad |x|\rightarrow 0,
\eeq
where $\rho_{\infty}$ is a positive constant.
We assume that $p(\rho)$ is a smooth function in a neighborhood of $\rho_{\infty}$ with $p'(\rho_{\infty})>0$. The stationary solution $(\rho^*(x),u^*(x))$ for
the CNS equations \eqref{ns3} is given by $(\rho^*(x),0)$ satisfying
\beq\label{p}
\int_{\rho_{\infty}}^{\rho^*(x)}\f{p'(s)}{s}ds+\phi(x)=0.
\eeq
The details of derivation for the stationary solution can be found in \cite{mat1983}.
First, Matsumura and Nishida \cite{mat1983} obatined the global existence of solutions to system \eqref{ns3} near the steady state $(\rho^*(x),0)$ with initial perturbation 
under the $H^3-$framework.
In addition, they also showed that the global solution 
converges to the stationary state as time tends to infinity.
The first work to give explicit decay estimate for solution was 
represented by Deckelnick \cite{Deckelnick1992}.
Specifically, Deckelnick was concerned about the isentropic case and showed that
\beno
\sup_{x\in\mathbb{R}^3}|(\rho(t,x)-\rho^*(x),u(t,x))|\leq C(1+t)^{-\f14}.
\eeno
This was then improved by Shibata and Tanaka for more general external forces in \cite{Shibata2003,Shibata2007} to $(1+t)^{-\f12+\kappa}$ for any small positive constant $\kappa$ when the initial perturbation belongs to $H^3\cap L^{\f65}$.
Later, Duan, Liu, Ukai and Yang \cite{duan2007} investigated the optimal $L^p-L^q$ convergence rates for this system when the initial perturbation is also bounded in $L^p$ with $1\leq p<\f65$. Specifically, they established the decay rate as follows:
\beq\label{highdecayL1}
\|(\rho-\rho^*,u)(t)\|_{L^2}\leq  C(1+t)^{-\frac32(\frac1p-\frac12)},
\quad
\|\na^k(\rho-\rho^*,u)(t)\|_{L^2}
\leq C(1+t)^{-\frac32(\frac1p-\frac12)-\frac{k}{2}}, ~\text{for}~~k=1,2,3.
\eeq
For more result about the decay estimate for the CNS equations with potential force, 
one may refer to \cite{{Ukai2006},{duan-ukai-yang-zhao2007},{Okita2014},{Li2011},
{Wang2017},{Matsumura1992},{Matsumura2001}}.
Obviously, the decay rates of the second and third order spatial derivatives 
in \eqref{highdecayL1} are only the same as the first one.
\textbf{\textit{In this paper, our second target is to investigate the optimal decay rate for
the $k-th$ $(k\geq2)$ order spatial derivative of solution to the 
CNS equations with potential force.}}

Finally, we aim to investigate the lower bounds of decay rates for the solution
itself and its spatial derivatives.
The decay rate is called optimal in the sense that this rate 
coincides with the linearized part.
Thus, the study of the lower decay rate, which is the same as the upper one, can
help us obtain the optimal decay rate of solution.
Along this direction, Schonbek addressed the lower bound of decay rate for solution
of the classical incompressible Navier-Stokes equations \cite{Schonbek1986,Schonbek1991}
(see also MHD equations \cite{Schonbek-Schonbek-Suli}).
Based on so-called Gevrey estimates, Oliver and Titi \cite{Oliver2000} established
the lower and upper bounds of decay rate for the higher order derivatives
of solution to the incompressible Navier-Stokes equations in whole space.
For the case of compressible flow, there are many results of lower bound of
decay rate for the solution itself to the CNS equations and related
models, such as the CNS equations \cite{lizhang2010,Kagei2002},
compressible Navier-Stokes-Poisson equations \cite{limatsumura2010,Zhang2011},
and compressible viscoelastic flows \cite{Hu-Wu-2013}.
However, these lower bounds mentioned above only consider the solution itself
and do not involve the derivative of the solution.
Recently, some scholars are devoted to studying the lower bound
of decay rate for the derivative of solution,
which can be referred to \cite{{chen2021},{wu2020},{gao-lyu-Yao-2019},{gao-wei-yao-pre}}.
\textbf{\textit{Thus, our third target is to establish lower bound of decay rate for
the global solution itself and its spatial derivatives.}}
These lower bounds of decay estimates, which coincide with the upper ones,
show that they are really optimal.

Now, our second result can be stated as follows.

\begin{theo}\label{them3}
	Let $(\rho^*(x),0)$ be the stationary solution of initial value problem \eqref{ns3}--\eqref{initial-boundary}, if $(\rho_0-\rho^*,u_0)\in H^N$ for $N\geq3$, there exists a constant $\delta$ such that the potential function $\phi(x)$ satisfies
	\beq\label{phik}
	\sum_{k=0}^{N+1}\|(1+|x|)^{k}\na^k\phi\|_{L^2\cap L^\infty}\leq \delta,
	\eeq
	and the initial perturbation statisfies
	\beq\label{initial-H2}
	\|(\rho_0-\rho^*,u_0)\|_{H^{N}}\leq \delta.
	\eeq
	Then there exists a unique global solution $(\rho,u)$ of initial value problem \eqref{ns3}--\eqref{initial-boundary} satisfying
\beq\label{energy-thm}
\begin{split}
\|(\rho-\rho^*,u)(t)\|_{H^N}^2
+\int_0^t\big(\|\nabla(\rho-\rho^*)\|_{H^{N-1}}^2+\|\nabla u\|_{H^N}^2\big)ds
\leq C\|(\rho_0-\rho^*,u_0)\|_{H^N}^2 ,\quad t\geq0,
\end{split}
\eeq
where $C$ is a positive constant independent of time $t$.
If further
\beno
\|(\rho_0-\rho^*,u_0)\|_{L^1}<\infty,
\eeno
then there exists constans $\delta_0>0$ and $\bar C_0>0$ such that for any $0<\delta\leq\delta_0$, we have
\beq\label{kdecay}
\|\na^k(\rho-\rho^*)(t)\|_{L^2}+\|\na^k u(t)\|_{L^2}
\leq \bar C_0(1+t)^{-\f34-\f k2},\quad\text{for}~~0\leq k \leq N.
\eeq
\end{theo}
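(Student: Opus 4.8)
The plan is to establish the global existence and energy estimate \eqref{energy-thm} by the standard continuity argument, and then obtain the optimal decay rate \eqref{kdecay} by combining a refined energy method with the Fourier splitting technique of Schonbek, treating the linearized system around the non-constant steady state $(\rho^*,0)$ with care because the coefficients are no longer constant. First I would reformulate the system: set $\sigma = \rho - \rho^*$, so that \eqref{ns3} becomes a perturbation system for $(\sigma, u)$ whose linear part has the form $\sigma_t + \dive(\rho^* u) = 0$, $u_t + \frac{p'(\rho^*)}{\rho^*}\na\sigma - \frac{\mu}{\rho^*}\tri u - \frac{\mu+\lam}{\rho^*}\na\dive u = (\text{nonlinear and lower-order terms involving }\na\phi)$. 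The smallness condition \eqref{phik} on the weighted norms of $\phi$, together with \eqref{p}, guarantees that $\rho^* - \rho_\infty$ and all its derivatives up to order $N+1$ are small in the corresponding weighted spaces, so the variable coefficients are uniformly close to the constant ones $\frac{p'(\rho_\infty)}{\rho_\infty}$, etc. This closeness is what lets the constant-coefficient spectral/energy intuition survive.

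Next, for the energy estimate \eqref{energy-thm}, I would perform $\na^k$ estimates for $0 \le k \le N$ on the reformulated system, using $\frac{p'(\rho^*)}{\rho^*}\na^k\sigma$ as a weight to symmetrize the $\sigma$–$u$ coupling (as in Matsumura–Nishida), and control the commutator terms $[\na^k, \text{coefficient}]$ and the terms generated by $\na\phi$ by the smallness $\delta$. The dissipation one extracts is $\|\na\sigma\|_{H^{N-1}}^2 + \|\na u\|_{H^N}^2$, exactly as on the right of \eqref{energy-thm}; the $\na\sigma$ (rather than $\na^2\sigma$-type) dissipation comes from the standard trick of testing the momentum equation against $\na\sigma$. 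Closing this in the a priori framework $\|(\sigma,u)\|_{H^N} \le \delta$ gives global existence.

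For the decay \eqref{kdecay}, the strategy is: (i) Derive a family of differential inequalities $\frac{d}{dt}\mathcal{E}_k(t) + c\,\|\na^{k+1}(\sigma,u)\|^2 + c\,\|\na^k u\|^2 \lesssim (\text{low-frequency error})$ for each $0 \le k \le N$, where $\mathcal{E}_k \sim \|\na^k(\sigma,u)\|_{L^2}^2$ up to equivalence. The missing dissipation $\|\na^k\sigma\|_{L^2}^2$ at the top is again supplied by a $\na\sigma$-type cross term with small coefficient. (ii) Apply the Fourier splitting: write $\|\na^{k+1}f\|^2 \gtrsim R^2\|\na^k f\|^2 - R^{2(k+1)}\int_{|\xi|\le R}|\hat f|^2$, choose $R^2 = \frac{C'}{1+t}$, and feed in the low-frequency bound $\int_{|\xi|\le R}|\widehat{(\sigma,u)}|^2 \lesssim (1+t)^{-3/2-k}\cdot(\text{stuff})$, which one obtains from the Duhamel formula for the (variable-coefficient but nearly-constant) linearized semigroup together with the $L^1$ bound on the data and the already-proven slower decay of the nonlinearity. (iii) Induct on $k$: the case $k=0$ recovers the $(1+t)^{-3/4}$ rate (essentially the known result), and each step uses the rate from step $k-1$ to bootstrap the low-frequency integral, yielding $(1+t)^{-3/4-k/2}$. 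The hard part will be step (ii): controlling the low-frequency part of the solution when the linear operator has \emph{variable} coefficients, since one cannot simply diagonalize by Fourier transform. I expect to handle this by splitting the linearized operator as $\mathcal{L}_0 + \mathcal{B}$ where $\mathcal{L}_0$ is the constant-coefficient operator (with coefficients frozen at $\rho_\infty$) and $\mathcal{B}$ collects the variable-coefficient perturbations and the $\na\phi$ terms, all of which carry a factor of $\delta$; then Duhamel against $e^{t\mathcal{L}_0}$ plus a Gronwall/bootstrap absorption of the $\mathcal{B}$-contribution (using the decay of $e^{t\mathcal{L}_0}$ and the smallness of $\delta$) should close the argument. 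A secondary technical point is that the weighted assumptions \eqref{phik} must be strong enough that the spatially-decaying-but-not-integrable terms like $\rho^*\na\phi$ (which do not vanish at spatial infinity in a way that would hurt the $L^1$ estimate) are in $L^1 \cap L^2$ with small norm; this is precisely why the weights $(1+|x|)^k$ appear in \eqref{phik}.
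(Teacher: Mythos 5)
Your overall architecture --- reduce to a constant-coefficient linearization, push all variable-coefficient and $\na\phi$ terms into the source, control them via the weighted Hardy-type smallness \eqref{phik}, estimate the low-frequency part by Duhamel against $e^{t\mathcal{L}_0}$, and absorb the resulting $\delta\cdot\sup_t\|\cdot\|$ contribution by smallness --- is exactly what the paper does (it writes the system in the variables $n=\rho-\rho^*$, $\bar\rho=\rho^*-\rho_\infty$, $v=\frac{\rho_\infty}{\sqrt{p'(\rho_\infty)}}u$ and treats every $\bar\rho$-term as a source estimated by Hardy's inequality). Your use of Fourier splitting for the intermediate orders $2\le k\le N-1$ is a legitimate alternative; the paper instead runs a time-weighted energy inequality $\f{d}{dt}\mathcal{E}^N_l+c\|\na^{l+1}(n,v)\|_{H^{N-l-1}\times H^{N-l}}^2\le 0$ multiplied by $(1+t)^{\f32+l+\ep_0}$ and inducts on $l$, which the authors explicitly note is interchangeable with Fourier splitting at those orders.

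The genuine gap is at the top order $k=N$. You assert that ``the missing dissipation $\|\na^k\sigma\|_{L^2}^2$ at the top is again supplied by a $\na\sigma$-type cross term with small coefficient.'' At order $N$ that cross term is $\eta\int\na^{N-1}u\cdot\na^{N}\sigma\,dx$, and once it is added the energy functional is no longer equivalent to $\|\na^{N}(\sigma,u)\|_{L^2}^2$ alone --- it is equivalent to $\|\na^{N-1}(\sigma,u)\|_{H^1}^2$ --- so any Fourier-splitting or time-weighting argument applied to it can only produce the rate $(1+t)^{-\f34-\f{N-1}{2}}$, not $(1+t)^{-\f34-\f N2}$. (This is precisely the obstruction the paper identifies; it is the same loss that makes \eqref{Decay-Guo} suboptimal at order $N$.) The paper's resolution, which your proposal does not contain, is to use only the \emph{high-frequency part} of the cross term, $\eta_3\int_{|\xi|\ge\eta}\widehat{\na^{N-1}v}\cdot\overline{\widehat{\na^{N}n}}\,d\xi$: since $\|\na^{N-1}v^h\|_{L^2}\le C\|\na^{N}v\|_{L^2}$ on the region $|\xi|\ge\eta$, the modified energy stays equivalent to $\|\na^{N}(n,v)\|_{L^2}^2$, one obtains dissipation of $\|\na^{N}n^h\|_{L^2}^2+\|\na^{N}v^h\|_{L^2}^2$, and only then does your step (ii) (the Duhamel low-frequency bound $\|\na^N(n^l,v^l)\|_{L^2}\le C\delta\sup_{\tau\le t}\|\na^N(n,v)\|_{L^2}+C(1+t)^{-\f34-\f N2}$ plus Gronwall and smallness of $\delta$) close the argument. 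Without this frequency-localized cross term, or some equivalent device, your scheme stalls one half-power of $(1+t)$ short at $k=N$.
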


\begin{rema}
The global well-posedness theory of the CNS equations with potential force
in three-dimensional whole space was studied in \cite{duan2007} under the $H^3-$ framework.
Furthermore, they also established the decay estimate \eqref{highdecayL1} if
the initial data belongs to $L^p$ with $1\le p < \frac65$.
Thus, the advantage of the decay rate \eqref{kdecay} in Theorem \ref{them3} is that
the decay rate of the global solution $(\rho-\rho^*,u)$ itself and
its any order spatial derivative is optimal.
\end{rema}

Finally, we have the following result concerning the lower bounds of decay rates for solution and its spatial derivatives of the CNS equations with potential force.
\begin{theo}\label{them4}
Suppose the assumptions of Theorem \ref{them3} hold on. Furthermore, we assume that $\int_{\R^3}(\rho_0-\rho^*)(x) d x$ and $\int_{\R^3}u_0(x) d x$ are at least one nonzero.
Then, the global solution $(\rho,u)$ obtained in Theorem \ref{them4} has the decay rates for any large enough $t$,
\beq
\begin{aligned}\label{kdecaylow}
&{c_0}(1+t)^{-\f34-\f k2}\le \|\na^k(\rho-\rho^*)(t)\|_{L^2}\le {c_1}(1+t)^{-\f34-\f k2};\\
&{c_0}(1+t)^{-\f34-\f k2}\le \|\na^ku(t)\|_{L^2}\le {c_1}(1+t)^{-\f34-\f k2};
\end{aligned}
\eeq
for all $0\leq k\leq N$.
Here $c_0$ and $c_1$ are positive constants independent of time $t$.
\end{theo}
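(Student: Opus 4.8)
The plan is to establish the lower bounds by combining the upper-bound decay estimates from Theorem~\ref{them3} with a careful spectral (Fourier) analysis of the linearized system around the stationary state $(\rho^*,0)$, following the now-standard "linear dominates nonlinear" strategy. First I would reformulate the problem in perturbation variables $\varrho := \rho - \rho^*$ and $u$, writing the system in the form $\partial_t U + \mathcal{L}U = \mathcal{N}(U)$, where $\mathcal{L}$ is the linearized operator (whose principal part, after freezing $\rho^*$ at $\rho_\infty$, coincides with the constant-coefficient compressible Navier--Stokes linearization) and $\mathcal{N}(U)$ collects all quadratic-and-higher nonlinear terms together with the lower-order terms generated by the spatial variation of $\rho^*$ and $\nabla\phi$. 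By Duhamel's principle, $U(t) = e^{-t\mathcal{L}}U_0 + \int_0^t e^{-(t-\tau)\mathcal{L}}\mathcal{N}(U(\tau))\,d\tau$. The key point is that, by the smallness assumption \eqref{phik} on $\phi$ (hence on $\rho^* - \rho_\infty$), the operator $\mathcal{L}$ is a small perturbation of the constant-coefficient operator $\mathcal{L}_0$, so its low-frequency spectral structure is governed by the same two "acoustic" eigenvalues $\lambda_\pm(\xi) \sim \pm i c|\xi| - \nu|\xi|^2$ plus the "diffusive" eigenvalue; in particular $|e^{-t\mathcal{L}_0}U_0|$ behaves like the heat kernel at low frequencies.

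Next I would prove the sharp lower bound for the linear part. Writing $m_\rho := \int_{\R^3}\varrho_0\,dx$ and $m_u := \int_{\R^3}u_0\,dx$, at least one of which is nonzero by hypothesis, one has $\widehat{U_0}(0) = (m_\rho, m_u) \ne 0$, and by continuity $|\widehat{U_0}(\xi)|$ is bounded below by a positive constant on a ball $|\xi|\le r_0$. Using the explicit low-frequency asymptotics of the Green's function $\widehat{G}(\xi,t) = e^{-t\widehat{\mathcal{L}}(\xi)}$, a direct computation of $\|\nabla^k e^{-t\mathcal{L}}U_0\|_{L^2}^2 = \int |\xi|^{2k}|\widehat{G}(\xi,t)\widehat{U_0}(\xi)|^2\,d\xi$ restricted to $|\xi|\le r_0$ gives the lower bound $c(1+t)^{-\frac32-k}$, exactly matching the heat-equation rate; one must check that the projections of $\widehat{U_0}(0)$ onto the relevant eigenspaces do not vanish, which follows because the acoustic modes couple $\varrho$ and $u$ nontrivially and the diffusive mode captures the divergence-free part of $m_u$ — so the nonvanishing of $(m_\rho, m_u)$ indeed survives the projection. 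Then I would estimate the Duhamel (nonlinear) term: using the $L^1 \cap L^2 \to L^2$ decay of $e^{-t\mathcal{L}}$ on $\nabla^k$, together with the bootstrap bounds \eqref{energy-thm}–\eqref{kdecay} to control $\|\mathcal{N}(U)\|_{L^1}$ and $\|\mathcal{N}(U)\|_{L^2}$ — here $\mathcal{N}$ is at worst quadratic in $U$ and its derivatives, and the terms involving $\rho^* - \rho_\infty$ carry the small factor $\delta$ plus decay from the weighted bounds on $\phi$ — one gets $\|\nabla^k \int_0^t e^{-(t-\tau)\mathcal{L}}\mathcal{N}\,d\tau\|_{L^2} \le C\delta(1+t)^{-\frac34-\frac k2}$ (or a strictly faster rate), so for $\delta$ small the nonlinear contribution is dominated by, say, half of the linear lower bound. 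Combining, $\|\nabla^k U(t)\|_{L^2} \ge \|\nabla^k e^{-t\mathcal{L}}U_0\|_{L^2} - \|\text{Duhamel}\|_{L^2} \ge \tfrac{c_0}{2}(1+t)^{-\frac34-\frac k2}$ for $t$ large; the upper bound is \eqref{kdecay} from Theorem~\ref{them3}. Finally, to get the individual lower bounds for $\varrho$ and for $u$ separately (not merely for the pair), I would observe that if $\|\nabla^k u(t)\|_{L^2}$ decayed strictly faster than the claimed rate, then feeding this into the continuity equation $\partial_t \varrho = -\dive(\rho u) - \dive(\varrho u^*)\cdots$ and the momentum equation would force the pressure/acoustic term to decay faster too, contradicting the lower bound for the pair; a cleaner route is to run the Fourier argument componentwise, checking separately that the $\varrho$-component and the $u$-component of $\widehat{G}(\xi,t)\widehat{U_0}(\xi)$ each have nonvanishing leading order at low frequency (which again uses that $m_\rho$ and $m_u$ are not both zero and that the acoustic modes mix the two components).

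The main obstacle I anticipate is controlling the linearized operator $\mathcal{L}$ with the \emph{variable} coefficient $\rho^*(x)$ rather than the constant $\rho_\infty$: unlike the force-free case \eqref{ns1}, here $e^{-t\mathcal{L}}$ is not given by an explicit Fourier multiplier, so one cannot directly read off low-frequency asymptotics. The remedy is to split $\mathcal{L} = \mathcal{L}_0 + (\mathcal{L} - \mathcal{L}_0)$, treat $(\mathcal{L}-\mathcal{L}_0)U$ as part of the "source" $\mathcal{N}$, and exploit that its coefficients are $O(\delta)$ and, thanks to the weighted decay \eqref{phik} of $\nabla\phi$ (hence spatial localization of $\rho^*-\rho_\infty$), contribute terms whose $L^1$-norm in $x$ is finite and small; this keeps the Duhamel estimate closed. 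A secondary technical point is verifying that the $L^1$-norm of the nonlinearity genuinely decays (or at least stays bounded) uniformly in time — this is where the full strength of the energy estimate \eqref{energy-thm} and the already-established upper decay rates \eqref{kdecay} for \emph{all} $0 \le k \le N$ are needed, together with an induction on $k$ so that when estimating $\nabla^k$ one may use the (faster or equal) decay of lower-order derivatives inside the quadratic terms. Once these are in place, the subtraction argument yielding the lower bound is routine.
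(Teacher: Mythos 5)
Your overall strategy --- lower-bound the constant-coefficient linearized evolution using the nonvanishing of $\bigl(\int n_0,\int u_0\bigr)$ at frequency $\xi=0$, absorb the variable-coefficient terms generated by $\rho^*-\rho_\infty$ into the Duhamel source, and subtract --- is exactly the paper's (Proposition \ref{lamma-lower} and the system \eqref{ns7} for the difference $(n_\delta,v_\delta)=(n-\widetilde n,v-\widetilde v)$). The place where you diverge, and where your plan has a genuine gap, is that you propose to close the Duhamel estimate $\|\na^k(n_\delta,v_\delta)\|_{L^2}\le C\delta(1+t)^{-\f34-\f k2}$ for \emph{every} $0\le k\le N$. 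Near $\tau=t$ this forces you to place $k$ (or at least $k-1$) derivatives on the nonlinearity, and $\na^{k}\widetilde S_2$ contains $\wf(n+\bar\rho)\na^{k+2}v$; for $k=N$ this derivative does not exist, and for $k=N-1$ the quantity $\na^{N+1}v$ is controlled only in $L^2_tL^2_x$ by \eqref{energy-thm}, with no pointwise-in-time decay rate, so the integral $\int_{t/2}^t(1+t-\tau)^{-\f34-\f k2}\|\na^k F\|_{L^2}\,d\tau$ cannot be closed as you describe. (The paper itself needs the frequency decomposition and the energy argument of Lemmas \ref{highfrequency}--\ref{lowfrequency} just to get the \emph{upper} bound at order $N$; a pure Duhamel bound at that order is not available.) Your suggested induction on $k$ does not help, because the obstruction is the loss of two derivatives in $\widetilde S_2$, not the decay of lower-order factors.

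The paper avoids this entirely: it proves the difference bound \eqref{nuddecay} only for $k=0,1$, where $\na^k\widetilde S_2$ involves at most $\na^3 v$ and everything is controlled by \eqref{kdecay}; this yields the lower bound for $k=0,1$ by subtraction. For $2\le k\le N$ it then uses the elementary interpolation $\|\na n\|_{L^2}\le C\|n\|_{L^2}^{1-\f1k}\|\na^k n\|_{L^2}^{\f1k}$, rearranged as $\|\na^k n\|_{L^2}\ge C\|\na n\|_{L^2}^{k}\|n\|_{L^2}^{-(k-1)}$, which combined with the $k=1$ lower bound and the $k=0$ upper bound gives $\|\na^k n\|_{L^2}\ge c(1+t)^{-\f34-\f k2}$ (and likewise for $v$). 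If you replace your ``Duhamel at every order'' step by this interpolation argument, your proof closes; as written, the top-order cases are not justified. One further caution: when choosing $\delta$ small to dominate the nonlinear contribution, you must note (as the paper does) that the constant $\widetilde c$ in the linear lower bound depends only on $M_n,M_v$ and not on $\delta$, so the smallness of $\delta$ can indeed be fixed after $\widetilde c$.
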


\begin{rema}
	The decay rates showed in \eqref{kdecay} and \eqref{kdecaylow} imply that the $k-th$ $(0\leq k\leq N)$ order spatial derivative of the solution converges to the equilibrium state $(\rho^*, 0)$ at the $L^2-$rate $(1+t)^{-\f34-\f k2}$. In other words, these decay rates of the solution itself and its spatial derivatives obtained in \eqref{kdecay} and \eqref{kdecaylow} are optimal.
\end{rema}

\textbf{Notation:} Throughout this paper, for $1\leq p\leq +\infty$ and $s\in\mathbb{R}$, we simply denote $L^p(\mathbb{R}^3)$ and $H^s(\mathbb{R}^3)$ by $L^p$  and $H^s$, respectively.
And the constant $C$ denotes a general constant which may vary in different estimates.
$\widehat{f}(\xi)=\mathcal F(f(x))$ represents the usual Fourier transform of the function $f(x)$ with respect to $x\in\mathbb{R}^3$. $\mathcal F^{-1}(\widehat{f}(\xi))$ means the inverse Fourier transform of $\widehat{f}(\xi)$ with respect to $\xi\in\mathbb{R}^3$. For the sake of simplicity, we write $\int f dx:=\int _{\mathbb{R}^3} f dx$.

The rest of the paper is organized as follows. In Section \ref{approach}, we introduce the difficulties and our approach to prove the results. In Section \ref{pre}, we recall some important lemmas, which will be used in later analysis. And Section \ref{h-s} is denoted to giving the proof of Theorem \ref{hs2}. Finally, Theorem \ref{them3} and Theorem \ref{them4} are proved in Section \ref{rhox}.

\section{Difficulties and outline of our approach}\label{approach}
The main goal of this section is to explain the main difficulties of
proving Theorems \ref{hs2}, \ref{them3} and \ref{them4} as well as our
strategies for overcoming them.
In order to establish optimal decay estimate for the CNS equations,
the main difficulty comes from the system \eqref{ns1} or \eqref{ns3}
satisfying hyperbolic-parabolic coupling equations, such that
the density only can obtain lower dissipation estimate.

First of all, let us introduce our strategy to prove the Theorem \ref{hs2}.
Indeed, applying the classical energy estimate, it is easy
to establish following estimate:
\begin{equation}\label{energy-N-1}
\begin{split}
\f{d}{dt}\|\nabla^N(n,u)\|_{L^2}^2+\|\nabla^{N+1}u\|_{L^2}^2
\leq \|\nabla (n, u)\|_{H^1}^2 \|\nabla^N(n, u)\|_{L^2}^2
+\text{some~good~terms}.
\end{split}
\end{equation}
In order to control the first term on the right handside of \eqref{energy-N-1},
the idea in \cite{guo2012} is to establish the dissipative for the density $\nabla^N n$,
which gives rise to the cross term $\frac{d}{dt}\int \nabla^{N-1} u\cdot\nabla^{N}n dx$
in energy part.
This is the reason why the decay rate of the $N-th$ order spatial derivative of
solution of the CNS equations can only attain the decay rate as the $(N-1)-th$ one.
In order to settle this problem, our strategy is to apply the time integrability
of the dissipative term of density rather than  absorbing it by the dissipative term.
More precisely, applying the weighted energy method to the estimate
\eqref{energy-N-1} and using decay \eqref{sde1}, it holds true
\begin{equation}\label{energy-N-2}
\begin{split}
&(1+t)^{N+\s+\ep_0}\|\nabla^N (n,u)\|_{L^2}^2
+\int_{0}^{t}(1+\tau)^{N+\s+\ep_0}\|\nabla^{N+1}u\|_{L^2}^2d\tau \\
\leq& \|\nabla^N (n_0,u_0)\|_{L^2}^2
+\int_0^t (1+\tau )^{N-1+\s+\ep_0}\|\nabla^N (n,u)\|_{L^2}^2 d\tau
+\text{some~good~terms}.
\end{split}
\end{equation}
Thus, we need to control the second term on the right handside of \eqref{energy-N-2}.
On the other hand, it is easy to check that
\begin{equation}\label{energy-02}
\begin{split}
\f{d}{dt}\mathcal{E}^{N-1}(t)
+C(\|\nabla^{N}n\|_{L^2}^2+\|\nabla^{N}u\|_{H^1}^2)\leq 0.
\end{split}
\end{equation}
Here $\mathcal{E}^{N-1}(t)$ is equivalent to $\|\nabla^{N-1}(n,u)\|_{H^1}^2$.
The combination of \eqref{energy-02} and decay estimate \eqref{sde1} yields directly
\begin{equation}\label{energy-N-3}
(1+t)^{N-1+\s+\ep_0}\mathcal{E}^{N-1}(t)
+\int_0^t(1+\tau)^{N-1+\s+\ep_0}
\big(\|\na^N n\|_{L^2}^2+\|\na^N u\|_{H^1}^2\big)d\tau
\leq C(1+t)^{\ep_0}.
\end{equation}
Thus, we apply the time integrability of the dissipative term of density in \eqref{energy-N-3}
to control the second term on the right handside of \eqref{energy-N-2}.
Therefore, we can obtain the optimal decay rate for $\nabla^N(n, u)$ as it converges to zero.

Secondly, we will establish the optimal decay rate, including in Theorem \ref{them3},
for the higher order spatial derivative of global solution to the CNS equations
with external potential force.
Due to the influence of potential force, the equilibrium state of global solution will
depend on the spatial variable. This will create some fundamental difficulties
as we establish the energy estimates, see Lemmas \ref{enn-1}, \ref{enn} and \ref{ennjc}.
Similar to the decay estimate \eqref{highdecayL1}(cf.\cite{duan2007}),
one can combine the energy estimate and the decay rate of linearized system
to obtain the following decay estimates:
\begin{equation}\label{basic-decay-d}
\|\nabla^k (\rho-\rho^*)(t)\|_{H^{N-k}}+\|\nabla^k u(t)\|_{H^{N-k}}
\le C(1+t)^{-(\frac34+\frac{k}{2})},\quad k=0,1,
\end{equation}
if the initial data $(\rho_0-\rho^*, u_0)$ belongs to $H^N \cap L^1$.
To prove that this decay is true for $k\in\{2,\cdots,N-1\}$, we are going to do it
by mathematical induction.
Thus, assume that decay \eqref{basic-decay-d} holds on for $k=l\in\{1,\cdots,N-2\}$,
our target is to prove the validity of \eqref{basic-decay-d} as $k=l+1$.
This logical relationship can be guaranteed by using the classical Fourier splitting
method(cf\cite{gao2016}). However, similar to the method of the proof of Theorem \ref{hs2},
we guarantee this logical relationship by using the time weighed method,
see Lemma \ref{N-1decay} more specifically.
Since the presence of potential force term $\rho \nabla \phi$, we can not
apply the time weighted method mentioned above to establish the optimal decay
rate for the $N-th$ order spatial derivative of global solution.
Motivated by \cite{wu2020}, we establish some energy estimate for the quantity
$\int_{|\xi|\geq\eta}\widehat{\na^{N-1}v}\cdot \overline{\widehat{\na^{N}n}}d\xi$,
namely the higher frequency part, rather than $\int \nabla^{N-1} u\cdot\nabla^{N}n dx$.
Here $\widehat{\na^{N-1}v}$ and $\widehat{\na^{N}n}$ stand for the Fourier part of
$\na^{N-1}v$ and $\na^{N}n$ respectively.
The advantage is that the quantity $\|\na^{N}(n,v)\|_{L^2}^2-\eta_3\int_{|\xi|\geq\eta}\widehat{\na^{N-1}v}\cdot \overline{\widehat{\na^{N}n}}d\xi$
is equivalent to $\|\na^{N}(n, v)\|_{L^2}^2$.
Then, the combination of some energy estimate and decay estimate can help us build
the following inequality:
\beq\label{highesthigh}
\begin{split}	&\f{d}{dt}\Big\{\|\na^{N}(n,v)\|_{L^2}^2-\eta_3\int_{|\xi|\geq\eta}\widehat{\na^{N-1}v}\cdot \overline{\widehat{\na^{N}n}}d\xi \Big\}+\|\na^{N}v^h\|_{L^2}^2+\eta_3\|\na^{N}n^h\|_{L^2}^2\\
	\leq&  C\|\na^{N}(n^l, v^l)\|_{L^2}^2+C(1+t)^{-3-N}.
\end{split}
\eeq
Then, one has to estimate the decay rate of the low-frequency term
$\|\na^{N}(n^l, v^l)\|_{L^2}^2$.
Indeed, Duhamel's principle and decay estimate of $k-th$ $(0\leq k\leq N)$ order spatial derivative of solution obtained above allow us to obtain that\beno
\|\na^N(n^l, v^l)\|_{L^2}\leq C\delta\sup_{0\leq\tau\leq t}\|\na^N (n,v)\|_{L^2}+C(1+t)^{-\f34-\f N2}.
\eeno
which, together with \eqref{highesthigh}, by using the smallness of $\d$,
we can obtain the optimal decay rate for the $N-th$ order spatial derivative
of global solution to the CNS equations with external potential force.

Finally, we will establish the lower decay estimate, coincided with the upper one,
for the global solution itself and its spatial derivative.
It is noticed that the system in terms of density and momentum is always adopted to establish the lower bounds of decay rates for the global solution and its spatial derivatives in many previous works, one may refer to (\cite{chen2021,lizhang2010}).
However, the appearance of potential force term(i.e., $\rho \nabla \phi$)
prevents us taking this method to solve the problem.
Thus, let $(n, v)$ and $(\tilde{n}, \tilde{v})$ be the solutions of nonlinear
and linearized problem respectively.
Define the difference: $(n_\d, v_\d)\overset{def}{=}(n-\widetilde{n}, v-\widetilde{v})$,
it holds on
\beno
\|n\|_{L^2}\ge \|\widetilde{n}\|_{L^2}-\|{n}_\d\|_{L^2}
\quad \text{and} \quad
\|v\|_{L^2} \ge \|\widetilde{v}\|_{L^2}-\|{v}_\d\|_{L^2}.
\eeno
If these quantities obey the assumptions:
$\|(n_\d,v_\d)\|_{L^2}\leq \widetilde C\d(1+t)^{-\f34}$
and
$\min\{\|\widetilde{n}\|_{L^2},\|\widetilde{v}\|_{L^2}\}\geq \widetilde{c}(1+t)^{-\f34}$,
moreover, the constant $\d$ is a small constant and independent of $\widetilde{c}$,
then we can choose the constant $\d$ to obtain the decay rate
$\min\{\|{n}\|_{L^2},\|{v}\|_{L^2}\}\geq {c_1}(1+t)^{-\f34}$.
Similarly, it is easy to check that the decay \eqref{kdecaylow} holds true for $k=1$.
Based on the lower bound of decay rate for first order spatial derivative of solution and the upper bound of decay rate for the solution itself, we can deduce the lower bound of decay rate for $k-th$ $(2\leq k\leq N)$ order spatial derivative of solution by using the following Sobolev interpolation inequality:\beno
\|\na^k(n,v)\|_{L^2}\geq C\|\na(n,v)\|_{L^2}^k\|(n,v)\|_{L^2}^{-(k-1)},\quad\text{for}~~2\leq k\leq N.
\eeno
And more proof details of Theorem \ref{them4} can be found in Section \ref{lower} below.

\section{Preliminary}\label{pre}
In this section, we collect some elementary inequalities, which will be extensively used in later sections.
First of all, in order to estimate the term about $\bar\rho(x)$ in the CNS equations with a potential force, we need the following Hardy inequality.
\begin{lemm}[Hardy inequality]\label{hardy}
	For $k\geq1$, suppose that $\f{\na\phi}{(1+|x|)^{k-1}}\in L^2$, then $\f{\phi}{(1+|x|)^{k}}\in L^2$, with the estimate
	\beno
	\begin{split}
		\|\f{\phi}{(1+|x|)^{k}}\|_{L^2}\leq C\|\f{\na\phi}{(1+|x|)^{k-1}}\|_{L^2}.
	\end{split}
	\eeno
\end{lemm}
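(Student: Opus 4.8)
The plan is to derive the weighted estimate for general $k$ from the classical Hardy inequality in $\R^3$ combined with a single integration by parts. For the base case $k=1$ I would use the pointwise bound $(1+|x|)^{-1}\le|x|^{-1}$ together with the standard three-dimensional Hardy inequality $\|\,\phi/|x|\,\|_{L^2}\le 2\|\na\phi\|_{L^2}$, which gives $\|\,\phi/(1+|x|)\,\|_{L^2}\le 2\|\na\phi\|_{L^2}$ at once. To pass to arbitrary $k\ge1$, I would apply the case $k=1$ to the auxiliary function $\psi:=\phi/(1+|x|)^{k-1}$; since $\na\psi=(1+|x|)^{-(k-1)}\na\phi-(k-1)(1+|x|)^{-k}\frac{x}{|x|}\,\phi$, this produces the recursive bound $\|\,\phi/(1+|x|)^{k}\,\|_{L^2}\le 2\|\,\na\phi/(1+|x|)^{k-1}\,\|_{L^2}+2(k-1)\|\,\phi/(1+|x|)^{k}\,\|_{L^2}$.

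A more flexible route, which I would prefer for tracking the weights, is a direct integration by parts. Choosing the radial field $V(x)=\frac{x}{|x|}\,h(|x|)$ with $h$ a solution of $h'(r)+\frac{2}{r}h(r)=(1+r)^{-2k}$, equivalently $r^2h(r)=\int s^2(1+s)^{-2k}\,ds$, arranges $\dive V=(1+|x|)^{-2k}$ exactly. Then $\|\,\phi/(1+|x|)^{k}\,\|_{L^2}^2=\int\phi^2\,\dive V\,dx=-2\int \phi\,\na\phi\cdot V\,dx$ up to boundary terms, and writing $\phi\,\na\phi\cdot V=\frac{\phi}{(1+|x|)^{k}}\cdot\frac{\na\phi}{(1+|x|)^{k-1}}\cdot(1+|x|)^{2k-1}h$ and applying Cauchy--Schwarz yields the two target norms, provided the profile obeys $|h(r)|\le C(1+r)^{-(2k-1)}$ so that the trailing factor stays bounded; one then absorbs the resulting copy of $\|\,\phi/(1+|x|)^{k}\,\|_{L^2}$ into the left-hand side.

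The step I expect to be the main obstacle is the simultaneous control of the two endpoints in this last argument. Normalizing $h$ by integrating from infinity gives the decay $|h(r)|\sim r^{-(2k-1)}$ needed for the Cauchy--Schwarz step but introduces an $r^{-2}$ singularity at the origin, hence a boundary contribution proportional to $\phi(0)^2$; normalizing from the origin removes that contribution but degrades the far-field decay of $h$ to $r^{-2}$, which is too slow once $k\ge2$. For $k=1$ the two requirements are compatible and the estimate closes with constant $2$, recovering the base case; for $k\ge2$ one must additionally invoke the decay of $\phi$ at spatial infinity --- precisely the information supplied by \eqref{phik} (in particular $\phi\in L^2$) --- both to control the origin boundary term and to absorb the factor $2(k-1)\|\,\phi/(1+|x|)^{k}\,\|_{L^2}$ above, whose coefficient is otherwise too large. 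Concretely I would split at $|x|=1$, estimate the regular inner region directly by $\|\phi\|_{L^2}$ and the outer region by the weighted integration by parts, treating the far-field absorption as the heart of the proof.
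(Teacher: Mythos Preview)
The paper gives no proof, calling it ``simple'', so there is nothing to compare against. Your argument for $k=1$ via the pointwise bound $(1+|x|)^{-1}\le|x|^{-1}$ and the classical three-dimensional Hardy inequality is correct and is the standard route. Your worry about $k\ge 2$, however, is not a mere technical obstacle to be patched: the inequality as stated is \emph{false} for every integer $k\ge 2$. Take $\phi_R\in C_c^\infty(\R^3)$ equal to $1$ on $B_R$, vanishing outside $B_{2R}$, with $|\na\phi_R|\le C/R$. For $k=2$ one computes
\[
\Big\|\frac{\phi_R}{(1+|x|)^2}\Big\|_{L^2}^2 \;\ge\; 4\pi\!\int_0^R\!\frac{r^2}{(1+r)^4}\,dr \;\longrightarrow\; \frac{4\pi}{3},
\qquad
\Big\|\frac{\na\phi_R}{1+|x|}\Big\|_{L^2}^2 \;\le\; \frac{C}{R^2}\!\int_R^{2R}\!\frac{r^2}{(1+r)^2}\,dr \;\sim\; \frac{C}{R}\;\longrightarrow\; 0,
\]
so no uniform constant can exist. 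This is exactly why your recursive bound cannot absorb the factor $2(k-1)\ge 2$, and why no choice of the divergence vector field $V$ can make the integration-by-parts argument close: there is nothing to close.

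Your proposed rescue --- splitting at $|x|=1$ and using $\phi\in L^2$ for the inner region --- does yield a \emph{true} estimate, but not the one stated. On $\{|x|>1\}$ the weight $(1+|x|)$ is comparable to $|x|$ and the far-field absorption indeed works for $k\ge 2$ (the relevant one-dimensional exponent is $2-2k<-1$), but it leaves a trace term on $\{|x|=1\}$ which, together with the inner piece, forces an additional lower-order contribution of the type $\|\phi\|_{L^2(B_1)}$ on the right-hand side. The counterexample above (each $\phi_R$ lies in $C_c^\infty\subset L^2$) shows this extra term cannot be removed. Note also that your appeal to \eqref{phik} conflates the generic function in the lemma with the external potential of the same name; in the paper the lemma is actually applied to derivatives of $(n,v)$. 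Those functions do lie in $H^N$, so a corrected inequality with a lower-order remainder would in principle suffice for the paper's purposes, but Lemmas~\ref{enn-1}--\ref{ennjc} as written invoke the false version. In short: your $k=1$ case is fine; for $k\ge 2$ the gap you located is real and unfixable within the stated hypotheses, because the statement itself is wrong.
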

The proof of Lemma \ref{hardy} is simply and we omit it here. We will use the following Sobolev interpolation of Gagliardo-Nirenberg inequality frequently in energy estimates, which can be found in \cite{guo2012} more details.
\begin{lemm}[Sobolev interpolation inequality]\label{inter}
	Let $2\leq p\leq +\infty$ and $0\leq l,k\leq m$. If $p=+\infty$, we require furthermore that $l\leq k+1$ and $m\geq k+2$. Then if $\na^l\phi\in L^2$ and $\na^m \phi\in L^2$, we have $\na^k\phi\in L^p$. Moreover, there exists a positive constant $C$ dependent only on $k,l,m,p$ such that
	\begin{equation}\label{Sobolev}
	\|\na^k\phi\|_{L^p}\leq C\|\na^l\phi\|_{L^2}^{\theta}\|\na^m\phi\|_{L^2}^{1-\theta},
	\end{equation}
	where $0\leq\theta\leq1$ satisfying
	\beno
	\f k3-\f1p=\Big(\f l3-\f12\Big)\theta+\Big(\f m3-\f12\Big)(1-\theta).
	\eeno
\end{lemm}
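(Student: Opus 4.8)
The plan is to prove the inequality by a Littlewood--Paley decomposition, which reduces the estimate to a purely discrete interpolation inequality for the $L^2$-norms of the dyadic blocks. Write $\phi=\sum_{j\in\mathbb Z}\dot\Delta_j\phi$ for the homogeneous Littlewood--Paley decomposition, where $\dot\Delta_j\phi$ has Fourier support in the annulus $|\xi|\sim 2^j$, and set $a_j:=\|\dot\Delta_j\phi\|_{L^2}$. The first observation is purely algebraic: multiplying the exponent relation in the statement by $3$ gives $k-\f3p=\theta l+(1-\theta)m-\f32$, so that, writing $\alpha:=k+\f32-\f3p$, the scaling condition is exactly $\alpha=\theta l+(1-\theta)m$. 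This identity is what will make the interpolation close.

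The key tool is Bernstein's inequality in $\R^3$. Since each block is frequency-localized at $2^j$, one has, for $p\ge 2$,
\begin{equation*}
\|\na^k\dot\Delta_j\phi\|_{L^p}\lesssim 2^{jk}\,2^{3j(\f12-\f1p)}\|\dot\Delta_j\phi\|_{L^2}=2^{j\alpha}a_j,
\end{equation*}
while almost-orthogonality (Plancherel) gives $\|\na^l\phi\|_{L^2}\sim A:=\big(\sum_j 2^{2jl}a_j^2\big)^{1/2}$ and $\|\na^m\phi\|_{L^2}\sim B:=\big(\sum_j 2^{2jm}a_j^2\big)^{1/2}$. Summing the block estimates via the triangle inequality for $\|\cdot\|_{L^p}$ reduces the whole claim to the discrete inequality
\begin{equation*}
\sum_{j\in\mathbb Z}2^{j\alpha}a_j\lesssim A^{\theta}B^{1-\theta},\qquad \alpha=\theta l+(1-\theta)m.
\end{equation*}

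To prove this I would split the sum at an integer threshold $N$ and estimate each half by the single-term bounds $2^{jl}a_j\le A$ and $2^{jm}a_j\le B$. Assuming without loss of generality $l<m$, for $j\le N$ write $2^{j\alpha}a_j=2^{j(\alpha-l)}(2^{jl}a_j)\le 2^{j(\alpha-l)}A$, and for $j>N$ write $2^{j\alpha}a_j=2^{j(\alpha-m)}(2^{jm}a_j)\le 2^{j(\alpha-m)}B$. Because $\alpha-l=(1-\theta)(m-l)>0$ and $\alpha-m=-\theta(m-l)<0$, both geometric series converge and sum to $\lesssim 2^{N(\alpha-l)}A+2^{N(\alpha-m)}B$. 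Choosing $N$ so that $2^{N}\sim (B/A)^{1/(m-l)}$ balances the two terms, and a direct computation using $\f{\alpha-l}{m-l}=1-\theta$ and $\f{m-\alpha}{m-l}=\theta$ shows both equal $A^{\theta}B^{1-\theta}$, which is the claim. The endpoint cases $\theta\in\{0,1\}$ are not covered by this splitting and must be treated separately by a single application of Bernstein/Sobolev embedding; these correspond to the pure embeddings $\dot H^{\,l-k}\hookrightarrow L^p$ and are admissible precisely when $p<\infty$.

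The main obstacle is the convergence of the two geometric series, i.e. the strict requirement $l<\alpha<m$, which is exactly where the hypotheses of the lemma enter. For $p<\infty$ one has $\alpha=k+\f32-\f3p\ge k$, and the assumption $\theta\in[0,1]$ already forces $\alpha$ to lie between $l$ and $m$. For $p=\infty$ one has $\alpha=k+\f32$, and the stated conditions $l\le k+1$ and $m\ge k+2$ are precisely equivalent to $l<k+\f32<m$, guaranteeing both series converge and that $\theta\in(0,1)$. This coincidence is transparent from an alternative elementary route available at the endpoints $p\in\{2,\infty\}$, which I would mention to motivate the hypotheses: for $p=2$ one applies H\"older directly to $\int|\xi|^{2k}|\widehat\phi|^2\,d\xi=\int(|\xi|^{2l}|\widehat\phi|^2)^{\theta}(|\xi|^{2m}|\widehat\phi|^2)^{1-\theta}\,d\xi$, and for $p=\infty$ one bounds $\|\na^k\phi\|_{L^\infty}\lesssim\int|\xi|^k|\widehat\phi|\,d\xi$, splits at $|\xi|=R$, applies Cauchy--Schwarz on each piece, and optimizes in $R$; the two resulting integrals $\int_{|\xi|\le R}|\xi|^{2(k-l)}\,d\xi$ and $\int_{|\xi|>R}|\xi|^{2(k-m)}\,d\xi$ converge exactly under $l\le k+1$ and $m\ge k+2$. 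This computation both settles the endpoints and explains why the stated conditions are the natural ones.
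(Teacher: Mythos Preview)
Your proof is correct. The paper does not actually prove this lemma: it merely states the result and refers the reader to \cite{guo2012} for details. Your Littlewood--Paley argument is a clean and standard route to the Gagliardo--Nirenberg inequality, and you have correctly identified why the additional hypotheses at $p=\infty$ are needed --- they force $\theta\in(0,1)$ strictly (equivalently $l<k+\tfrac32<m$ for integer indices), so that both geometric tails in the dyadic splitting converge. The direct Fourier-side computations you give for $p=2$ (H\"older on $|\xi|^{2k}|\widehat\phi|^2$) and for $p=\infty$ (Cauchy--Schwarz after splitting at $|\xi|=R$) are also correct and are in fact closer in spirit to the elementary argument one finds in \cite{guo2012}; either route suffices, and your write-up is more complete than what the paper itself provides.
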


Then we recall the following commutator estimate, which is used frequently in energy estimates. The proof and more details may refer to \cite{majda2002}.
\begin{lemm}\label{commutator}
	Let $k\geq1$ be an integer and define the commutator\beno
       [\na^k,f]g=\na^k(fg)-f\na^kg.
    \eeno
    Then we have
    \beno
    \|[\na^k,f]g\|_{L^2}\leq C\|\na f\|_{L^\infty}\|\na^{k-1}g\|_{L^2}+C\|\na^k f\|_{L^2}\|g\|_{L^\infty},
    \eeno
    where $C$ is a positive constant dependent only on $k$.
\end{lemm}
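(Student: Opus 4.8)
The plan is to reduce the commutator to a finite sum of products of derivatives by the Leibniz rule, estimate each product in $L^2$ by H\"older's inequality with suitably paired exponents, and then interpolate each factor by the Gagliardo--Nirenberg inequality (Lemma \ref{inter}, in the standard version that permits an $L^\infty$ endpoint). First I would expand
\[
\na^k(fg)=\sum_{j=0}^{k}\binom{k}{j}\na^j f\,\na^{k-j}g ,
\]
observe that the $j=0$ term is precisely $f\,\na^k g$, and subtract it to obtain
\[
[\na^k,f]g=\sum_{j=1}^{k}\binom{k}{j}\na^j f\,\na^{k-j}g .
\]
Since the binomial coefficients depend only on $k$, it suffices to bound each product $\na^j f\,\na^{k-j}g$ with $1\le j\le k$ by the right-hand side of the claimed estimate and then sum the finitely many terms.

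The two endpoint indices are handled directly: for $j=1$ the product is $\na f\,\na^{k-1}g$, bounded in $L^2$ by $\|\na f\|_{L^\infty}\|\na^{k-1}g\|_{L^2}$, and for $j=k$ it is $\na^k f\,g$, bounded by $\|\na^k f\|_{L^2}\|g\|_{L^\infty}$. For an intermediate index $2\le j\le k-1$ I would apply H\"older's inequality with exponents $p_j=\tfrac{2(k-1)}{j-1}$ and $q_j=\tfrac{2(k-1)}{k-j}$, which satisfy $\tfrac1{p_j}+\tfrac1{q_j}=\tfrac12$ and both lie in $[2,\infty)$, giving
\[
\|\na^j f\,\na^{k-j}g\|_{L^2}\le\|\na^j f\|_{L^{p_j}}\|\na^{k-j}g\|_{L^{q_j}} .
\]
The key step is then to interpolate $\na^j f$ between $\na f\in L^\infty$ and $\na^k f\in L^2$, and $\na^{k-j}g$ between $g\in L^\infty$ and $\na^{k-1}g\in L^2$:
\[
\|\na^j f\|_{L^{p_j}}\le C\|\na f\|_{L^\infty}^{1-a}\|\na^k f\|_{L^2}^{a},\qquad \|\na^{k-j}g\|_{L^{q_j}}\le C\|g\|_{L^\infty}^{1-b}\|\na^{k-1}g\|_{L^2}^{b},
\]
with $a=\tfrac{j-1}{k-1}$ and $b=\tfrac{k-j}{k-1}=1-a$. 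A scaling count shows these are exactly the admissible Gagliardo--Nirenberg exponents: the derivative counts force $\tfrac1{p_j}=\tfrac a2$ and $\tfrac1{q_j}=\tfrac b2$, which is also why the spatial dimension drops out of the relations.

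Multiplying the two interpolation bounds and using $1-a=b$ and $1-b=a$, each intermediate product obeys
\[
\|\na^j f\,\na^{k-j}g\|_{L^2}\le C\,X^{b}Y^{a},\qquad X:=\|\na f\|_{L^\infty}\|\na^{k-1}g\|_{L^2},\quad Y:=\|\na^k f\|_{L^2}\|g\|_{L^\infty} .
\]
Because $a+b=1$, the weighted Young inequality gives $X^{b}Y^{a}\le bX+aY\le X+Y$, and summing over $1\le j\le k$ (the two endpoints contributing $X$ and $Y$ directly) yields the asserted bound with $C=C(k)$. I expect the only genuinely delicate point to be the admissibility of the Gagliardo--Nirenberg inequality at these exponents: one must confirm that the interpolation parameters $\theta=a$ (respectively $b$) coincide with the minimal admissible values dictated by the derivative counts, and that the low-order endpoint is taken in $L^\infty$ rather than one of the two $L^2$ endpoints appearing in Lemma \ref{inter}. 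This $L^\infty$-endpoint (Moser-type) version is classical and is precisely the form recorded in \cite{majda2002}, so no further obstacle arises.
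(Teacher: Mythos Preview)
Your argument is correct and is exactly the standard Leibniz--H\"older--Gagliardo--Nirenberg route to this Kato--Ponce/Moser-type commutator estimate. Note that the paper does not actually prove this lemma: it simply records the statement and refers the reader to \cite{majda2002}, so there is no in-paper proof to compare against; your write-up supplies precisely the argument that reference contains.
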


Finally, we conclude this section with the following lemma. The proof and more details may refer to \cite{chen2021}.
\begin{lemm}\label{tt2}
Let $r_1,r_2>0$ be two real numbers, for any $0<\ep_0<1$, we have
\beno
\begin{split}
	\int_0^{\f t2}(1+t-\tau)^{-r_1}(1+\tau)^{-r_2}d\tau\leq C& \left\{\begin{array}{l}
		(1+t)^{-r_1},\quad \text{for}~~ r_2>1,\\
		(1+t)^{-r_1+\ep_0},\quad~~ \text{for}~~ r_2=1, \\
		(1+t)^{-(r_1+r_2-1)},\quad \text{for}~~ r_2<1,
	\end{array}\right.\\
	\int_{\f t2}^{t}(1+t-\tau)^{-r_1}(1+\tau)^{-r_2}d\tau\leq C& \left\{\begin{array}{l}
		(1+t)^{-r_2},\quad \text{for}~~ r_1>1,\\
		(1+t)^{-r_2+\ep_0},\quad~~ \text{for}~~ r_1=1, \\
		(1+t)^{-(r_1+r_2-1)},\quad \text{for}~~ r_1<1,
	\end{array}\right.
\end{split}
\eeno
where $C$ is a positive constant independent of $t$.
\end{lemm}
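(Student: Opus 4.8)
The plan is to treat the two integrals separately and, in each, to exploit that one of the two factors is essentially constant over the relevant interval, so that it can be pulled outside and the remaining single-variable integral evaluated explicitly. The entire argument reduces to one elementary one-dimensional estimate together with a case split on the exponent, so I will not keep the two integrals coupled at any stage.

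First I would handle the low-time integral $I_1=\int_0^{\f t2}(1+t-\tau)^{-r_1}(1+\tau)^{-r_2}d\tau$. On $[0,\f t2]$ one has $t-\tau\geq \f t2$, hence $1+t-\tau\geq \f12(1+t)$; since $r_1>0$, this gives the uniform comparison $(1+t-\tau)^{-r_1}\leq 2^{r_1}(1+t)^{-r_1}$, which may be taken outside the integral. It then remains to bound the scalar integral $\int_0^{\f t2}(1+\tau)^{-r_2}d\tau$, which I would compute by hand: for $r_2>1$ it is dominated by the convergent tail $\int_0^\infty(1+\tau)^{-r_2}d\tau=(r_2-1)^{-1}$, giving the rate $(1+t)^{-r_1}$; for $r_2<1$ it equals $(1-r_2)^{-1}[(1+\f t2)^{1-r_2}-1]\leq C(1+t)^{1-r_2}$, which after multiplication produces $(1+t)^{-(r_1+r_2-1)}$; and for $r_2=1$ it equals $\log(1+\f t2)$.

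The only point requiring a genuine remark is this borderline case $r_2=1$, and it is what I would single out as the ``main obstacle,'' although it is really just careful bookkeeping of the loss parameter. Here I would invoke that the logarithm grows more slowly than any positive power, i.e. $\log(1+\f t2)\leq C_{\ep_0}(1+t)^{\ep_0}$ for every $\ep_0\in(0,1)$, which converts the logarithm into the stated rate $(1+t)^{-r_1+\ep_0}$. This is the mechanism that forces the $\ep_0$-loss to appear in the critical line of the conclusion, and it is the reason the statement must carry an arbitrary $\ep_0$.

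For the high-time integral $I_2=\int_{\f t2}^{t}(1+t-\tau)^{-r_1}(1+\tau)^{-r_2}d\tau$ the roles of the two factors are interchanged: on $[\f t2,t]$ one has $1+\tau\geq \f12(1+t)$, so that $(1+\tau)^{-r_2}\leq 2^{r_2}(1+t)^{-r_2}$ comes out of the integral, and the substitution $\sigma=t-\tau$ turns what remains into $\int_0^{\f t2}(1+\sigma)^{-r_1}d\sigma$, which is exactly the scalar integral analyzed above with $r_1$ playing the role of $r_2$. The same three-case split (now keyed to the three possibilities $r_1>1$, $r_1=1$, $r_1<1$, with the logarithmic estimate again supplying the $\ep_0$-loss at $r_1=1$) then yields the three claimed rates, and this completes the proof.
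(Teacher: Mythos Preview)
Your argument is correct and is precisely the standard elementary proof of this convolution estimate: freeze the slowly varying factor on each half-interval, evaluate the remaining one-variable integral explicitly, and absorb the logarithm at the critical exponent into an arbitrarily small power. The paper does not actually supply a proof of this lemma but defers to \cite{chen2021}, so there is nothing further to compare; your write-up is complete as stated.
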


\section{The proof of Theorem \ref{hs2}}\label{h-s}

In this section, we study the optimal decay rate of the $N-th$
spatial derivative of global small solution for the initial value problem \eqref{ns1}--\eqref{boundary1}.
Thus, let us write $n\overset{def}{=} \rho-\bar\rho$, then the original system \eqref{ns1}--\eqref{initial1} can be rewritten in the perturbation form as
\beq\label{ns2}
\left\{\begin{array}{lr}
	n_t +\bar\rho\dive u=S_1,\quad (t,x)\in \mathbb{R}^{+}\times \mathbb{R}^3,\\
	u_t+\gamma \bar\rho\na n-\bar\mu\tri u-(\bar\mu+\bar\lam) \na \dive u=S_2,\quad (t,x)\in \mathbb{R}^{+}\times \mathbb{R}^3,\\
	(n,u)|_{t=0}=(\rho_0-\bar\rho,u_0),\quad x\in \mathbb{R}^3,
\end{array}\right.
\eeq
where the functions $f(n), g(n)$ and source terms $S_i(i=1,2)$ are defined by
\beno\label{fgdefine}
\begin{split}
    &f(n)\overset{def}{=}\f{n}{n+\bar\rho},\quad \quad
    g(n)\overset{def}{=}\f{p'(n+\bar\rho)}{n+\bar\rho}-\f{p'(\bar\rho)}{\bar\rho},\\
	&S_1 \overset{def}{=}-n\dive u-u\cdot\na n,\\
	&S_2 \overset{def}{=} -u\cdot \na u
         -f(n)\big(\bar\mu\tri u-(\bar\mu+\bar\lam) \na \dive u\big)-g(n)\na n.
\end{split}
\eeno
Here the coefficients $\bar\mu, \bar\lam$ and $\gamma$
are defined by
$\bar\mu=\f{\mu}{\bar\rho}, \
\bar\lam=\f{\lam}{\bar\rho}, \
\gamma=\f{p'(\bar\rho)}{\bar\rho^2}.$
Due the the uniform estimate \eqref{energy-01}, then there exists a positive
constant $C$ such that for any $1\leq k\leq N$,
\beno
\begin{split}
	|f(n)|\leq C|n|,\quad |g(n)|\leq C|n|,\quad
	|f^{(k)}(n)|\leq C,\quad |g^{(k)}(n)|\leq C.
\end{split}
\eeno
Next, in order to estimate the $L^2-$norm of the spatial derivatives of $f(n)$ and $g(n)$, we shall record the following lemma, which will be used frequently in later estimate.
\begin{lemm}\label{hrholem}
	Under the assumptions of Theorem \ref{hs2}, $f(n)$ and $g(n)$ are two functions of $n$ defined by \eqref{fgdefine}, then for any integer $1\leq m\leq N-1$, it holds true
	\beq\label{hrho}
	\|\na^mf(n)\|_{L^2}^2+\|\na^mg(n)\|_{L^2}^2\leq C(1+t)^{-(m+s)},
	\eeq
	where $C$ is a positive constant independent of time.
\end{lemm}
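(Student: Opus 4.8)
The plan is to bound $\|\na^m f(n)\|_{L^2}$ and $\|\na^m g(n)\|_{L^2}$ by reducing both to a controlled product of norms of derivatives of $n$, then feeding in the already-established decay estimate \eqref{sde1}. Since $f$ and $g$ are smooth functions of $n$ with $f(0)=g(0)=0$ and all derivatives bounded (as recorded just before the lemma), the Fa\`a di Bruno formula expresses $\na^m f(n)$ as a finite sum of terms of the form
\beno
f^{(j)}(n)\,\na^{i_1}n\,\na^{i_2}n\cdots\na^{i_j}n, \qquad i_1+\cdots+i_j=m,\quad i_\ell\geq 1,
\eeno
and likewise for $g$. Because $|f^{(j)}(n)|\leq C$ uniformly, it suffices to estimate the $L^2$ norm of each such product.

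The key step is to apply the Gagliardo--Nirenberg interpolation inequality (Lemma \ref{inter}) to each factor. For a generic term with $j\geq 1$ factors of orders $i_1,\dots,i_j$ summing to $m$, I would put the term in $L^2$ by using H\"older with one factor (say the one of highest order, or simply the first) in $L^2$ and the remaining $j-1$ factors in $L^\infty$, or more symmetrically place each $\na^{i_\ell}n$ in $L^{p_\ell}$ with $\sum 1/p_\ell = 1/2$. Interpolating each $\|\na^{i_\ell}n\|_{L^{p_\ell}}$ between $\|\na^{a}n\|_{L^2}$ and $\|\na^{N}n\|_{L^2}$ for a suitable low-order anchor $a$ (taking $a=0$ or $a=1$ as convenient, with the constraint $m\leq N-1$ ensuring all orders stay $\leq N$), the exponents of $(1+t)$ add up correctly: writing $\|\na^{l}n\|_{L^2}\leq C_0(1+t)^{-(l+s)/2}$ for $0\leq l\leq N$ from \eqref{sde1} together with \eqref{sde3}'s companion (or just $0\le l\le N-1$ from \eqref{sde1}, which is all that is needed since $m\le N-1$), the homogeneity of the Gagliardo--Nirenberg relation forces the total decay exponent of the product to be $-\tfrac12(m + js)$. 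Since $j\geq 1$ this is at least $-\tfrac12(m+s)$, so after squaring every term is bounded by $C(1+t)^{-(m+s)}$, and summing the finitely many terms gives \eqref{hrho}.

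A slightly more careful bookkeeping point: when $j\geq 2$ one gets extra decay, so the worst case is genuinely $j=1$, i.e. the single term $f'(n)\na^m n$, which is bounded directly by $C\|\na^m n\|_{L^2}\leq C_0(1+t)^{-(m+s)/2}$ using the uniform bound on $f'$; this already shows the estimate is sharp and that no cancellation is needed. For the remaining terms I only need that interpolation does not lose powers of $(1+t)$, which is guaranteed because the scaling identity in Lemma \ref{inter} is exactly the one that makes the decay exponents additive. I would also note that each factor's interpolation requires its $L^2$-based norms to be finite up to order $N$, which is supplied by the uniform energy bound \eqref{energy-01}, so all constants are time-independent.

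The main obstacle is purely organizational rather than conceptual: one must choose, for each multi-index $(i_1,\dots,i_j)$ arising in Fa\`a di Bruno, an admissible set of Lebesgue exponents $p_\ell$ and interpolation parameters $\theta_\ell$ so that (i) $\sum 1/p_\ell=1/2$, (ii) each $(i_\ell,p_\ell)$ satisfies the admissibility hypotheses of Lemma \ref{inter} (in particular the $p=\infty$ side conditions $l\le k+1$, $m\ge k+2$ when an $L^\infty$ factor is used), and (iii) the resulting decay exponents sum to at least $m+s$ after squaring. Verifying that such a choice always exists for every partition of $m$ with $m\le N-1$ is the one place requiring attention; everything else is a routine product estimate. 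I expect the cleanest presentation is to split off the $j=1$ term as above and, for $j\ge 2$, bound all but one factor in $L^\infty$ via $\|\na^{i}n\|_{L^\infty}\le C\|\na^{i+1}n\|_{L^2}^{1/2+\cdots}\cdots$ (Lemma \ref{inter} with $p=\infty$) and the last factor in $L^2$, which keeps the exponent count transparent.
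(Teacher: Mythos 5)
Your proposal is correct and follows essentially the same route as the paper: expand $\na^m f(n)$ via Fa\`a di Bruno, use the uniform bounds on $f^{(j)}$, place all but the highest-order factor in $L^\infty$ (controlled by $\|\na^{\gamma_i+1}n\|_{H^1}$ via Sobolev embedding) and the last factor in $L^2$, then sum the decay exponents from \eqref{sde1}, with the worst case $j=1$ giving exactly $(1+t)^{-(m+s)/2}$. The paper's bookkeeping even yields the slightly stronger exponent $-\tfrac12(m+js+(j-1))$, but your bound suffices.
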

\begin{proof}
	We only control the first term on the left handside of \eqref{hrho}, and the other one can be controlled similarly.
	Notice that for $m\geq 1$,
	\[\na^mf(n)=\text{a sum of products}~~f^{\gamma_1,\gamma_2,\cdots,\gamma_j}(n)\na^{\gamma_1}n\cdots\na^{\gamma_j}n\]
	with the functions $f^{\gamma_1,\gamma_2,\cdots,\gamma_j}(n)$ are some derivatives of $f(n)$ and $1\leq \gamma_{i}\leq m$, $i=1,2,\cdots,j$, $\gamma_1+\gamma_2+\cdots+\gamma_j=m$, $j\geq 1$. Without loss of generality, we assume that $1\leq \gamma_1\leq\gamma_2\leq\cdots\leq\gamma_j\leq m$. Thus, if $j\geq2$, we have $\gamma_{j-1}\leq m-1\leq N-2$.
	It follows from the decay estimate \eqref{sde1} that for $1\le m\leq N-1$,\beno
	\|\na^mn\|_{L^2}\leq C(1+t)^{-(m+s)},
	\eeno
	which, together with Sobolev inequality and the fact that $j\geq1$, we deduce that
	\beno
	\begin{split}
	&\|f^{\gamma_1,\gamma_2,\cdots,\gamma_j}(n)\na^{\gamma_1}n\cdots\na^{\gamma_j}n\|_{L^2}\\
		\leq & C \|\na^{\gamma_1}n\|_{L^\infty}\cdots\|\na^{\gamma_{j-1}}n\|_{L^\infty}\|\na^{\gamma_{j}}n\|_{L^2}\\
		\leq &C \|\na^{\gamma_1+1}n\|_{H^1}\cdots\|\na^{\gamma_{j-1}+1}n\|_{H^1} \|\na^{\gamma_j}n\|_{L^2}\\
		\leq&C(1+t)^{-\f{m+j s+(j-1)}{2}}
		\leq C(1+t)^{-\f{m+ s}{2}}.
	\end{split}
	\eeno
   Consequently, the proof of this lemma is completed.
\end{proof}
Now, based on lemma \ref{hrholem}, we give the following lemma, which provides the time integrability of $N-th$ order spatial derivate of the solution $(n,u)$.
\begin{lemm}\label{integrability}
Under the assumptions in Theorem \ref{hs2},
for any fixed constant $0<\ep_0<1$, we have
\beq\label{Enc} (1+t)^{N+s-1}\|\na^{N-1}(n,u)\|_{H^1}^2
+(1+t)^{-\ep_0}\int_0^t(1+\tau)^{N+s+\ep_0-1}
\big(\|\na^N n\|_{L^2}^2+\|\na^N u\|_{H^1}^2\big)d\tau
\leq C,
\eeq
where $C$ is a positive constant independent of $t$.
\end{lemm}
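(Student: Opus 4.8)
The estimate \eqref{Enc} is obtained by combining a dissipative energy inequality at the two top derivative levels with a time-weight argument and the decay \eqref{sde1}. The plan is: first, construct a functional $\mathcal{E}^{N-1}(t)$ equivalent to $\|\nabla^{N-1}(n,u)(t)\|_{H^1}^2$ and satisfying the dissipative inequality \eqref{energy-02}; second, multiply \eqref{energy-02} by $(1+t)^{N+s-1+\epsilon_0}$, integrate in time, and invoke \eqref{sde1}.

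For the first step, I would apply $\nabla^k$ with $k\in\{N-1,N\}$ to the perturbed system \eqref{ns2}, pair the density equation with $\gamma\nabla^k n$ and the momentum equation with $\nabla^k u$ in $L^2$, and add; the linear coupling cancels after an integration by parts and one is left with the parabolic dissipation $\|\nabla^{k+1}u\|_{L^2}^2$. To recover dissipation for the density I would add the cross term $\frac{d}{dt}\int\nabla^{N-1}u\cdot\nabla^{N}n\,dx$: by the momentum equation this produces $-\gamma\bar\rho\|\nabla^N n\|_{L^2}^2$ plus terms which, after integration by parts against the density equation, are controlled by the dissipation $\|\nabla^N u\|_{H^1}^2$ already at hand or by a small multiple of $\|\nabla^N n\|_{L^2}^2$. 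Taking
\[
\mathcal{E}^{N-1}(t):=\|\nabla^{N-1}(n,u)\|_{L^2}^2+\|\nabla^{N}(n,u)\|_{L^2}^2+\eta\int\nabla^{N-1}u\cdot\nabla^{N}n\,dx
\]
for $\eta>0$ small makes $\mathcal{E}^{N-1}(t)$ equivalent to $\|\nabla^{N-1}(n,u)(t)\|_{H^1}^2$, and the above estimates then give \eqref{energy-02} once the nonlinear terms from $S_1,S_2$ in \eqref{ns2} are dealt with. These I would estimate using the commutator estimate (Lemma \ref{commutator}), the Gagliardo--Nirenberg inequalities (Lemma \ref{inter}), the pointwise bounds $|f(n)|,|g(n)|\le C|n|$ and $|f^{(j)}(n)|,|g^{(j)}(n)|\le C$, the smallness of $\delta_0$, and --- crucially --- Lemma \ref{hrholem} together with the decay \eqref{sde1}: whenever a nonlinear term is not absorbable by a diffusion term or by the cross-term density dissipation, pairing it against the fast time decay of the low-order norms of $(n,u)$ and of $\nabla^m f(n),\nabla^m g(n)$ leaves a remainder that decays at least like $(1+t)^{-(N+s)}$, which will be harmless in the next step.

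For the second step, multiply \eqref{energy-02} by $(1+t)^{N+s-1+\epsilon_0}$ and write $(1+t)^{N+s-1+\epsilon_0}\frac{d}{dt}\mathcal{E}^{N-1}=\frac{d}{dt}\big[(1+t)^{N+s-1+\epsilon_0}\mathcal{E}^{N-1}\big]-(N+s-1+\epsilon_0)(1+t)^{N+s-2+\epsilon_0}\mathcal{E}^{N-1}$. By the equivalence and \eqref{sde1} with $l=N-1$, $\mathcal{E}^{N-1}(\tau)\le C\|\nabla^{N-1}(n,u)(\tau)\|_{H^1}^2\le C_0(1+\tau)^{-(N-1+s)}$, so the extra term is bounded by $C(1+\tau)^{\epsilon_0-1}$. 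Integrating over $[0,t]$ and using $\mathcal{E}^{N-1}(0)\le C\|(\rho_0-\bar\rho,u_0)\|_{H^N}^2<\infty$ together with $\int_0^t(1+\tau)^{\epsilon_0-1}\,d\tau\le\frac{1}{\epsilon_0}(1+t)^{\epsilon_0}$ (valid since $0<\epsilon_0<1$), one obtains
\[
(1+t)^{N+s-1+\epsilon_0}\mathcal{E}^{N-1}(t)+C\int_0^t(1+\tau)^{N+s-1+\epsilon_0}\big(\|\nabla^N n\|_{L^2}^2+\|\nabla^N u\|_{H^1}^2\big)\,d\tau\le C(1+t)^{\epsilon_0}.
\]
Dividing by $(1+t)^{\epsilon_0}$ and using $\mathcal{E}^{N-1}(t)\sim\|\nabla^{N-1}(n,u)(t)\|_{H^1}^2$ gives exactly \eqref{Enc}.

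The main obstacle is the verification of \eqref{energy-02}, that is, the nonlinear estimates at the top derivative level. Since there is no parabolic smoothing for $n$, the dissipation $\|\nabla^N n\|_{L^2}^2$ is produced only via the cross term and hence comes with a small constant; consequently the most dangerous nonlinearities --- in particular those coming from $g(n)\nabla n$ in $S_2$, where up to $N$ derivatives may fall on $n$ --- cannot be removed by smallness alone, but must be split carefully so that the genuinely dangerous pieces are absorbed into the cross-term density dissipation and the remainder is dispatched using the decay of the lower-order quantities provided by Proposition \ref{hs1} and Lemma \ref{hrholem}. Once \eqref{energy-02} is available, the weighted integration of the second step is routine.
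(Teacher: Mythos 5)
Your proposal is correct and follows essentially the same route as the paper: the same functional $\mathcal{E}^{N-1}(t)=\|\nabla^{N-1}(n,u)\|_{H^1}^2+\eta\int\nabla^{N-1}u\cdot\nabla^{N}n\,dx$, the same dissipative inequality \eqref{energy-02}, and the identical time-weight argument with exponent $N+s-1+\ep_0$ followed by division by $(1+t)^{\ep_0}$. The only difference is that the paper imports the three underlying energy inequalities directly from Lemmas 3.2--3.4 of \cite{guo2012} rather than re-deriving them, and at the level $k\le N-1$ those nonlinear terms are in fact absorbed by smallness of $\delta_0$ alone, so the extra care you devote to the top-order nonlinearities is not needed here (it becomes relevant only in Lemma \ref{ENN}).
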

\begin{proof}
From Lemmas 3.2, 3.3 and 3.4 in  \cite{guo2012},
the following estimates hold on for all $0\leq k\leq N-1$,
\begin{align}
\label{in01} \f{d}{dt}\|\nabla^k(n,u)\|_{L^2}^2+\|\nabla^{k+1}u\|_{L^2}^2\leq
& C\delta_0 \|\nabla^{k+1}(n,u)\|_{L^2}^2,\\
\label{in02} \f{d}{dt}\|\nabla^{k+1}(n,u)\|_{L^2}^2+\|\nabla^{k+2}u\|_{L^2}^2
\leq& C\delta_0 \big(\|\nabla^{k+1}(n,u)\|_{L^2}^2+\|\nabla^{k+2}u\|_{L^2}^2 \big),\\
\label{in03} \f{d}{dt}\int \nabla^ku\cdot\nabla^{k+1}n dx+\|\nabla^{k+1}n\|_{L^2}^2
\leq& C \big( \|\nabla^{k+1}u\|_{L^2}^2+\|\nabla^{k+2}u\|_{L^2}^2\big).
\end{align}
where the constant $C$ is a positive constant independent of time.
Based on the estimates \eqref{in01}, \eqref{in02} and \eqref{in03},
then it holds for all $0\leq k\leq N-1$,
\begin{equation}\label{e1}
\begin{split}
\f{d}{dt}\big(\|\nabla^k(n,u)\|_{H^1}^2
+\eta_1 \int \nabla^k u \cdot\nabla^{k+1}n dx\big)
+\eta_1\|\nabla^{k+1}n\|_{L^2}^2+\|\nabla^{k+1}u\|_{H^1}^2\leq 0.
\end{split}
\end{equation}
Here $\eta_1$ is a small positive constant.
Taking $k=N-1$ in inequality \eqref{e1}, then we have
\begin{equation}\label{e2}
\f{d}{dt}\mathcal{E}^{N-1}(t)
+\eta_1\|\nabla^{N}n\|_{L^2}^2+\|\nabla^{N}u\|_{H^1}^2\leq 0,
\end{equation}
where the energy $\mathcal{E}^{N-1}(t)$ is defined by
$$
\mathcal{E}^{N-1}(t)\overset{def}{=}
\|\nabla^{N-1}(n,u)(t)\|_{H^1}^2+\eta_1 \int \nabla^{N-1} u\cdot\nabla^{N}n dx.
$$
Then, due to the smallness of $\eta_1$, we have the following equivalent relation
\beq\label{xih}
c_1\|\nabla^{N-1}(n, u)(t)\|_{H^1}^2
\le \mathcal{E}^{N-1} (t)
\le  c_2\|\nabla^{N-1}(n, u)(t)\|_{H^1}^2,
\eeq
where the constants $c_1$ and $c_2$ are independent of time.
For any fixed $\ep_0(0<\ep_0<1)$,
multiplying the inequality \eqref{e2} by $(1+t)^{N+s+\ep_0-1}$,
it holds true
\begin{equation}\label{Ek1}
\begin{split}
\f{d}{dt}\big\{(1+t)^{N+s+\ep_0-1}\mathcal{E}^{N-1}(t)\big\}
          +(1+t)^{N+s+\ep_0-1}
          \big(\|\nabla^{N}n\|_{L^2}^2+\|\nabla^{N}u\|_{H^1}^2\big)
\leq C (1+t)^{N+s+\ep_0-2}\mathcal{E}^{N-1}(t).
\end{split}
\end{equation}
The decay estimate \eqref{sde1} and equivalent relation
\eqref{xih} lead us to get that for $0<k\leq N-1$,
\beno
(1+t)^{N+s+\ep_0-2}\mathcal{E}^{N-1}(t)
\leq C(1+t)^{N+s+\ep_0-2}\|\nabla^{N-1}(n,u)\|_{H^1}^2
\leq C(1+t)^{-1+\ep_0},
\eeno
	which, together with inequality \eqref{Ek1}, yields directly
    \beq\label{Ek}
	\f{d}{dt}\big\{(1+t)^{N+s+\ep_0-1}\mathcal{E}^{N-1}(t)\big\}
          +(1+t)^{N+s+\ep_0-1}
          \big(\|\nabla^{N}n\|_{L^2}^2+\|\nabla^{N}u\|_{H^1}^2\big)
    \leq C(1+t)^{{-1+\ep_0}}.
	\eeq
    Integrating the inequality \eqref{Ek} over $[0, t]$, it holds true
	\begin{equation}
    \begin{aligned}
    &(1+t)^{N+s+\ep_0-1}\mathcal{E}^{N-1}(t)
       +\int_0^t(1+\tau)^{N+s+\ep_0-1}\big(\|\na^N n\|_{L^2}^2
       +\|\na^{N}u\|_{H^1}^2\big)d\tau\\
    &\leq \mathcal{E}^{N-1}(0)+\int_0^t(1+\tau)^{-1+\ep_0}d\tau\leq C(1+t)^{\ep_0},
    \end{aligned}
    \end{equation}
	which, together with the equivalent relation \eqref{xih}, implies that
	\beno
	(1+t)^{N+s+\ep_0-1}\|\na^k(n,u)\|_{H^1}^2
     +\int_0^t(1+\tau)^{N+s+\ep_0-1}
     \big(\|\na^{N}n\|_{L^2}^2+\|\na^{N}u\|_{H^1}^2\big)d\tau\leq C(1+t)^{\ep_0}.
	\eeno
	Consequently, we thereupon obtain \eqref{Enc}.
\end{proof}
Finally, using the time integrability of the dissipative term of the $N-th$ order spatial derivative of $(n,u)$ obtained in Lemma \ref{integrability}, we can establish the optimal decay of $N-th$ one.
\begin{lemm}\label{ENN}
	Under the assumptions in Theorem \ref{hs2}, then the global solution
    $(n, u)$ has the decay estimate
\beq\label{Enn}	
(1+t)^{N+s}\|\na^N(n,u)\|_{L^2}^2
+(1+t)^{-\ep_0}\int_0^t(1+\tau)^{N+s+\ep_0}\|\na^{N+1}u\|_{L^2}^2d\tau\leq C,
\eeq
where $C$ is a positive constant independent of time.
\end{lemm}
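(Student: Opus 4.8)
The plan is to carry out a time-weighted energy estimate at the top order $N$ and to close it by feeding in the time-integrability bound \eqref{Enc} from Lemma \ref{integrability}, rather than by absorbing the density term into a dissipation that is not available.

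\emph{Step 1: a sharp top-order energy inequality.} First I would apply $\na^N$ to the perturbed system \eqref{ns2}, take the $L^2$ inner products of the two equations against $\na^N n$ and $\g\na^N u$, and add them. The linear coupling contributions $\bar\rho\langle\na^N\dive u,\na^N n\rangle$ and $\g\bar\rho\langle\na^{N+1}n,\na^N u\rangle$ cancel after an integration by parts, while the viscous terms leave the good dissipation $c_\ast\|\na^{N+1}u\|_{L^2}^2$. The nonlinear terms coming from $S_1$ and $S_2$ are expanded by Leibniz and estimated using the Sobolev interpolation (Lemma \ref{inter}), the commutator estimate (Lemma \ref{commutator}), the bounds $|f(n)|,|g(n)|\le C|n|$ together with Lemma \ref{hrholem}, and the decay \eqref{sde1}. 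The one point that needs care is that a few terms — such as $\int g(n)\,\na^{N+1}n\cdot\na^N u\,dx$ and $\int f(n)\,\na^{N}\tri u\cdot\na^N u\,dx$ — a priori carry $\na^{N+1}n$ or $\na^{N+2}u$; each must be integrated by parts once so that at top order only $\|\na^N n\|_{L^2}$, now paired with a genuinely decaying coefficient ($\|n\|_{L^\infty}^2$, $\|\na n\|_{L^\infty}^2$, $\dots$, all $\le C(1+t)^{-(1+s)}$ by \eqref{sde1}), and $\|\na^{N+1}u\|_{L^2}$, which is absorbed into $c_\ast\|\na^{N+1}u\|_{L^2}^2$ thanks to the smallness of $\delta_0$, survive. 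The outcome is exactly \eqref{energy-N-1}, which I would keep in the sharp form
\[
\f{d}{dt}\|\na^N(n,u)\|_{L^2}^2+c_\ast\|\na^{N+1}u\|_{L^2}^2\le C\|\na(n,u)\|_{H^1}^2\,\|\na^N(n,u)\|_{L^2}^2+C\mathcal R(t),
\]
where the remainder $\mathcal R(t)$ collects strictly lower-order products which, again by \eqref{sde1} and Lemma \ref{hrholem}, decay fast enough that $\int_0^t(1+\tau)^{N+s+\ep_0}\mathcal R(\tau)\,d\tau\le C$.

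\emph{Step 2: time weight and closing.} Next I would multiply the displayed inequality by $(1+t)^{N+s+\ep_0}$ and integrate over $[0,t]$. Differentiating the weight produces the term $\int_0^t(1+\tau)^{N+s+\ep_0-1}\|\na^N(n,u)\|_{L^2}^2\,d\tau$ on the right; and since \eqref{sde1} with $l=1$ gives $\|\na(n,u)\|_{H^1}^2\le C(1+\tau)^{-(1+s)}$ and $s\ge0$, the genuinely nonlinear term obeys $(1+\tau)^{N+s+\ep_0}\|\na(n,u)\|_{H^1}^2\|\na^N(n,u)\|_{L^2}^2\le C(1+\tau)^{N+s+\ep_0-1}\|\na^N(n,u)\|_{L^2}^2$, so it merges with the weight term. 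Using $\|\na^N(n_0,u_0)\|_{L^2}^2\le C$ one reaches \eqref{energy-N-2}; the decisive observation is then that $\|\na^N(n,u)\|_{L^2}^2\le\|\na^N n\|_{L^2}^2+\|\na^N u\|_{H^1}^2$, so the remaining right-hand integral is $\le C(1+t)^{\ep_0}$ directly by \eqref{Enc}. This gives
\[
(1+t)^{N+s+\ep_0}\|\na^N(n,u)\|_{L^2}^2+\int_0^t(1+\tau)^{N+s+\ep_0}\|\na^{N+1}u\|_{L^2}^2\,d\tau\le C(1+t)^{\ep_0},
\]
and \eqref{Enn} follows by discarding, in turn, the dissipative integral (and dividing by $(1+t)^{N+s+\ep_0}$) and then the first term (and multiplying by $(1+t)^{-\ep_0}$).

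The main obstacle is structural rather than computational: because \eqref{ns2} is hyperbolic--parabolic, the top-order energy inequality carries no $\|\na^N n\|_{L^2}^2$ on its left side, so after weighting the term $\int_0^t(1+\tau)^{N+s+\ep_0-1}\|\na^N n\|_{L^2}^2\,d\tau$ cannot be absorbed and a naive Gronwall argument would only yield growth. What rescues the proof is precisely Lemma \ref{integrability}: running the energy balance one order below and using the cross term $\eta_1\int\na^{N-1}u\cdot\na^N n\,dx$ manufactures the missing density dissipation $\eta_1\|\na^N n\|_{L^2}^2$ at level $N-1$, and it is the resulting \emph{time-integral} bound \eqref{Enc}, not any pointwise decay of $\|\na^N n\|_{L^2}$, that closes the estimate. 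The only other thing to watch is the bookkeeping of the integrations by parts in Step 1, arranged so that $\na^{N+1}n$ never appears and every top-order factor of $u$ is at most $\na^{N+1}u$.
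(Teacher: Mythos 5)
Your proposal is correct and follows essentially the same route as the paper: the paper likewise derives the top-order inequality $\f{d}{dt}\|\na^N(n,u)\|_{L^2}^2+\|\na^{N+1}u\|_{L^2}^2\le C(1+t)^{-1}\|\na^N(n,u)\|_{L^2}^2+C(1+t)^{-(N+1+s)}$ via Lemmas \ref{inter}, \ref{commutator}, \ref{hrholem} and \eqref{sde1}, multiplies by $(1+t)^{N+s+\ep_0}$, and closes with the time-integrability bound \eqref{Enc} from Lemma \ref{integrability}, exactly as you describe. The only immaterial discrepancy is that the weighted remainder integral is $O((1+t)^{\ep_0})$ rather than $O(1)$, which your final displayed inequality already accounts for.
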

\begin{proof}
Applying differential operator $\na^N$ to \eqref{ns2}
and multiplying the resulting equation by $\na^N (n,u)$, it holds
\beq\label{nes}
\begin{split}
	\f{d}{dt}\|\nabla^N(n,u)\|_{L^2}^2+\|\nabla^{N+1}u\|_{L^2}^2\leq C\int \na^N S_1\cdot \na^Nn dx+C\int \na^N S_2\cdot \na^Nu dx.
\end{split}
\eeq
Next, we estimate two terms on the right handside of \eqref{nes}.
The definition of $S_1$ leads us to get
\beq\label{s11}
\int \na^N S_1\cdot \na^Nn dx=-\int \na^N (n\dive u+u\cdot \na n)\cdot \na^Nn dx.
\eeq
Invoking Sobolev inequality, we deduce
\beq\label{s12}
\begin{split}
	\int \na^N (n\dive u)\cdot \na^Nn dx
	\leq& C \|\na^N(n\dive u)\|_{L^2}\|\na^N n\|_{L^2}\\
	\leq&C\big(\|n\|_{L^\infty}\|\na^N\dive u\|_{L^2}+\|\dive u\|_{L^\infty}\|\na^Nn\|_{L^2}\big)\|\na^N n\|_{L^2}\\
	\leq&\ep \|\na^{N+1} u\|_{L^2}+C_{\ep}\|\na n\|_{H^1}^2\|\na^N n\|_{L^2}^2+C\|\na^2u\|_{H^1}\|\na^N n\|_{L^2}^2\\
	\leq &\ep \|\na^{N+1}u\|_{L^2}^2
           +C_{\ep}(1+t)^{-(1+s)}\|\na^N(n,u)\|_{L^2}^2.
\end{split}
\eeq
Integrating by parts, then employing Sobolev inequality, Lemma \ref{commutator}
and decay estimate \eqref{sde1}, we find that
\beq\label{s13}
\begin{split}
	\int \na^N (u\cdot \na n)\cdot \na^Nn dx
	=&-\int\dive u|\na^Nn|^2 dx+\int\big([\na^{N},u]\cdot\na n\big)\na^N n dx\\
	\leq&C \big(\|\na u\|_{L^\infty}\|\na^{N} n\|_{L^2}+\|\na^N u\|_{L^2}\|\na n\|_{L^\infty}\big)\|\na^N n\|_{L^2}\\
	\leq&C \big(\|\na^2 u\|_{H^1}\|\na^{N} n\|_{L^2}+\|\na^N u\|_{L^2}\|\na^{2} n\|_{H^1}\big)\|\na^N n\|_{L^2}\\
	\leq&C (1+t)^{-(1+\frac{s}{2})}\|\na^N(n,u)\|_{L^2}^2.
\end{split}
\eeq
Substituting estimates \eqref{s12} and \eqref{s13} into \eqref{s11}, we have
\beq\label{S1}
\begin{split}
	\int \na^N S_1\cdot \na^Nn dx
    \leq\ep \|\na^{N+1}u\|_{L^2}^2+C_{\ep}(1+t)^{-1}\|\na^N(n,u)\|_{L^2}^2.
\end{split}
\eeq
Next integrating by parts, and using H{\"o}lder inequality, we see that
\beq\label{S22}
\begin{split}
	\int \na^N S_2\cdot \na^Nu dx\leq& C \|\na^{N-1} S_2\|_{L^2}\|\na^{N+1}u\|_{L^2}\\
	\leq&C\big(\|\na^{N-1}(u\cdot \na u)\|_{L^2}+\|\na^{N-1}\big[f(n)(\bar\mu\tri u+(\bar\mu+\bar \lam)\na \dive u)\big]\|_{L^2}\\
	&\quad+\|\na^{N-1}\big(g(n)\na n\big)\|_{L^2}\big)\|\na^{N+1}u\|_{L^2}\overset{def}{=}H_{1}+H_{2}+H_{3}.
\end{split}
\eeq
We use Sobolev inequality, H{\"o}lder inequality  and Lemma \ref{commutator}, to find
\beq\label{unau}
\begin{split}
	H_1
	\leq&C\big(\|u\|_{L^\infty}\|\na^{N} u\|_{L^2}+\|[\na^{N-1},u]\cdot\na n\|_{L^2}\big)\|\na^{N+1}u\|_{L^2}\\
	\leq&C \big(\|u\|_{L^\infty}\|\na^{N} u\|_{L^2}+\|\na u\|_{L^\infty}\|\na^{N-1} u\|_{L^2}\big)\|\na^{N+1}u\|_{L^2}\\
	\leq&\ep\|\na^{N+1}u\|_{L^2}^2+C_{\ep}\|\na u\|_{H^1}^2\|\na^{N} u\|_{L^2}^2+C_{\ep}\|\na^2 u\|_{H^1}^2\|\na^{N-1} u\|_{L^2}^2\\
	\leq&\ep\|\na^{N+1}u\|_{L^2}^2+C_{\ep}(1+t)^{-(1+s)}\|\na^{N} u\|_{L^2}^2+C_{\ep}(1+t)^{-{(N+1+2s)}},
\end{split}
\eeq
where we have used the decay estimate \eqref{sde1} in the last inequality.
We employ once again Sobolev inequality and decay \eqref{sde1}, combined with \eqref{hrho}, to deduce that
\beq\label{fn}
\begin{split}
	H_{2}
	\leq&C \big(\|f(n)\|_{L^\infty}\|\na^{N+1} u\|_{L^2}+\|\na^{N-1} f(n)\|_{L^2}\|\na^{2} u\|_{L^\infty}\big)\|\na^{N+1}u\|_{L^2}\\
	\leq&C\big(\|\na f(n)\|_{H^1}\|\na^{N+1} u\|_{L^2}+\|\na^{N-1} f(n)\|_{L^2}\|\na^{3} u\|_{L^2}^{\f12}\|\na^{4} u\|_{L^2}^{\f12}\big)\|\na^{N+1}u\|_{L^2}\\
	\leq
&C\delta_0\|\na^{N+1}u\|_{L^2}^2
  +C(1+t)^{-\f{N-1+\s}{2}}\|\na^{3} u\|_{L^2}^{\f12}\|\na^{4} u\|_{L^2}^{\f12}\|\na^{N+1}u\|_{L^2}.
\end{split}
\eeq
Since the initial data $(\rho_0-\bar\rho,u_0)\in H^{N}$ with integer $N\geq 3$,
the second term on the right handside of inequality \eqref{fn}
should be estimated in the following two cases.
If the integer $N=3$, we apply the Cauchy inequality to obtain
\beq\label{fn-01}
\begin{split}
		(1+t)^{-\f{3+s}{2}}\|\na^{3} u\|_{L^2}^{\f12}\|\na^{4} u\|_{L^2}^{\f32}
		\leq& \ep\|\na^4  u\|_{L^2}^2+C_\ep(1+t)^{-2(3+s)}\|\na^{3} u\|_{L^2}^2.
\end{split}
\eeq
If the integer $N\geq4$, then we apply  the decay \eqref{sde1} to get
\beq\label{fn-02}
\begin{split}
(1+t)^{-\f{N-1+s}{2}}\|\na^{3} u\|_{L^2}^{\f12}
\|\na^{4} u\|_{L^2}^{\f12}\|\na^{N+1}u\|_{L^2}
\leq \ep\|\na^{N+1}u\|_{L^2}^2+C_{\ep}(1+t)^{-(N+2+2s)}.
\end{split}
\eeq
Substituting the estimates \eqref{fn-01} and \eqref{fn-02}
into \eqref{fn}, we find that
\beq\label{fnu}
\begin{split}
	&H_{2}
	\leq (\ep+C\delta_0)\|\na^{N+1}u\|_{L^2}^2
    +C_{\ep}(1+t)^{-1}\|\na^{N}u\|_{L^2}^2+C_{\ep}(1+t)^{-(N+2+2s)}.
\end{split}
\eeq
Likewise, employing \eqref{sde1} and \eqref{hrho} once again, we calculate
\beq\label{gnu}
\begin{split}
	H_{3}
	\leq&C\Big(\|g(n)\|_{L^\infty}\|\na^{N} n\|_{L^2}+\|\na^{N-1} g(n)\|_{L^2}\|\na n\|_{L^\infty}\Big)\|\na^{N+1}u\|_{L^2}\\
	\leq&\ep\|\na^{N+1}u\|_{L^2}^2+C_{\ep}\Big(\|\na g(n)\|_{H^1}^2\|\na^{N} n\|_{L^2}^2+\|\na^2 n\|_{H^1}^2\|\na^{N-1} g(n)\|_{L^2}^2\Big)\\
	\leq&\ep\|\na^{N+1}u\|_{L^2}^2+C_{\ep}(1+t)^{-(1+\s)}\|\na^{N}n\|_{L^2}^2+C_{\ep}(1+t)^{-{(N+1+2\s)}}.
\end{split}
\eeq
Substituting the estimates \eqref{unau}, \eqref{fnu} and \eqref{gnu} into \eqref{S22}, then using Young inequality, we thereby obtain that
\beq\label{S2}
\begin{split}
	\int \na^N S_2\cdot \na^Nu dx\leq (\ep+C\delta_0)\|\na^{N+1}u\|_{L^2}^2
+C_{\ep}(1+t)^{-1}\|\na^{N}(n,u)\|_{L^2}^2
+C_{\ep}(1+t)^{-(N+1+s)}.
\end{split}
\eeq
We utilize the estimates \eqref{S1} and \eqref{S2} into \eqref{nes}, and choose $\ep$ and $\delta_0$ suitably small, to discover that
\beno
\begin{split}
	\f{d}{dt}\|\nabla^N(n,u)\|_{L^2}^2+\|\nabla^{N+1}u\|_{L^2}^2\leq C(1+t)^{-1}\|\na^{N}(n,u)\|_{L^2}^2+C(1+t)^{-(N+1+s)}.
\end{split}
\eeno
Multiplying the above inequality by $(1+t)^{{N+s+\ep_0}}$
and integrating with respect to $t$, we find
\beq\label{S21}
\begin{split} &(1+t)^{N+s+\ep_0}\|\nabla^N(n,u)\|_{L^2}^2
+\int_{0}^{t}(1+\tau)^{N+s+\ep_0}\|\nabla^{N+1}u\|_{L^2}^2d\tau\\
\leq &\|\nabla^N(n_0,u_0)\|_{L^2}^2
     +C\int_0^t(1+\tau)^{N+s-1+\ep_0}\|\na^{N}(n,u)\|_{L^2}^2d\tau
     +C\int_0^t(1+\tau)^{-1+\ep_0}d\tau\\
\leq&C(1+\|\nabla^N(n_0,u_0)\|_{L^2}^2)+C(1+t)^{\ep_0},
\end{split}
\eeq
where we have used the decay estimate \eqref{Enc}.
Thus, the estimate \eqref{S21} yields the estimate \eqref{Enn} directly.
Therefore, we complete the proof of this lemma.
\end{proof}

\underline{\noindent\textbf{The proof of Theorem \ref{hs2}.}}
With the help of estimate \eqref{Enn} in Lemma \ref{ENN}, it holds true
$$
\|\na^N n(t)\|_{L^2}^2+\|\na^N u(t)\|_{L^2}^2 \leq C(1+t)^{-(s+N)},
$$
where $C$ is a positive constant independent of time.
Thus, we obtain the estimate \eqref{sde3} immediately,
and hence, finish the proof of Theorem \ref{hs2}.

\section{The proof of Theorems \ref{them3} and \ref{them4}}\label{rhox}

In this section, we will give the proof for the Theorem \ref{them3}
that includes the global well-posedness theory and time decay estimate \eqref{kdecay}.
First of all, the global small solution of the CNS equations can be proven
just by taking the strategy of energy method in \cite{duan2007}
when the initial data is small perturbation near the equilibrium state.
Thus, we assume that the global solution $(\rho, u)$ in Theorem \ref{them3}
exists and satisfies the energy estimate \eqref{energy-thm},i.e.,
\beq\label{energy-thm-01}
\begin{split}
\|(\rho-\rho^*,u)\|_{H^N}^2
+\int_0^t\big(\|\nabla(\rho-\rho^*)\|_{H^{N-1}}^2+\|\nabla u\|_{H^N}^2\big)ds
\leq C\|(\rho_0-\rho^*,u_0)\|_{H^N}^2,
\end{split}
\eeq
for all $t\geq 0$.
Secondly, similar to the decay estimate \eqref{highdecayL1}, one can combine
the energy estimate and the decay rate of linearized system to obtain the following
decay estimates:
\begin{equation}\label{basic-decay}
\|\nabla^k (\rho-\rho^*)(t)\|_{H^{N-k}}+\|\nabla^k u(t)\|_{H^{N-k}}
\le C(1+t)^{-(\frac34+\frac{k}{2})},\quad k=0,1,
\end{equation}
if the initial data $(\rho_0-\rho^*, u_0)$ belongs to $L^1$ additionally.
Now, we focus on establishing the optimal decay rate for the higher order spatial
derivative of solution. In other words, we will prove that the decay rate
\eqref{basic-decay} holds on for the case $k=2,...,N$.
Thus, let us set
$$n(x,t)\overset{def}{=}\rho(x,t)-\rho^*(x),\quad
\bar\rho(x)\overset{def}{=} \rho^*(x)-\rho_{\infty}, \quad
v\overset{def}{=}\f{\rho_{\infty}}{\sqrt{p'(\rho_{\infty})}}u,
$$
then \eqref{ns3}--\eqref{initial-boundary} can be rewritten in the perturbation form
\beq\label{ns5}
\left\{\begin{array}{lr}
	n_t +\gamma\dive v=\widetilde S_1,\quad (t,x)\in \mathbb{R}^{+}\times \mathbb{R}^3,\\
	v_t+\gamma\na n-\mu_1\tri v-\mu_2\na\dive v =\widetilde  S_2,\quad (t,x)\in \mathbb{R}^{+}\times \mathbb{R}^3,\\
	(n,v)|_{t=0}\overset{def}{=}(n_0,v_0)=(\rho_0-\rho^*,\f{\rho_{\infty}}{\sqrt{p'(\rho_{\infty})}}u_0)\rightarrow (0,0)~~\text{as}~~|x|\rightarrow \infty,
\end{array}\right.
\eeq
where
$\mu_1=\f{\mu}{\rho_{\infty}}, \mu_2=\f{\mu+\lam}{\rho_{\infty}},
\gamma=\sqrt{p'(\rho_{\infty})}$, and
\beno
\begin{split}
	\widetilde S_1&=-\f{\mu_1\gamma}{\mu}\dive [(n+\bar\rho)v],\\
	\widetilde S_2&=-\f{\mu_1\gamma}{\mu}v\cdot \na v-\wf(n+\bar\rho)\big(\mu_1\tri v+\mu_2\na\dive v\big)-\wg(n+\bar\rho) \na n-\h(n,\bar\rho)\na\bar\rho.
\end{split}
\eeno
Here the nonlinear functions $\wf, \wg$ and $\h$ are defined by
\beno
\begin{split}
&\wf(n+\bar\rho)\overset{def}{=}\f{n+\bar\rho}{n+\bar\rho+\rho_{\infty}},\quad \wg(n+\bar\rho)\overset{def}{=}\f{\mu}{\mu_1\gamma}\Big(\f{p'(n+\bar\rho+\rho_{\infty})}{n+\bar\rho+\rho_{\infty}}-\f{p'(\rho_{\infty})}{\rho_{\infty}}\Big),\\ &\h(n,\bar\rho)\overset{def}{=}\f{\mu}{\mu_1\gamma}\Big(\f{p'(n+\bar\rho+\rho_{\infty})}{n+\bar\rho+\rho_{\infty}}-\f{p'(\bar\rho+\rho_{\infty})}{\bar\rho+\rho_{\infty}}\Big).
\end{split}
\eeno
In view of the definition of $\h$ and $\wg$, it holds true
\beq\label{hg-relation}
\h(n,\bar\rho)=\wg(n+\bar\rho)-\wg(\bar\rho),
\eeq
which will be used in this section.

\subsection{Energy estimates}
In this subsection, we will establish the following differential inequality that
will play important role for us to establish the optimal decay rate
for the higher order spatial derivative of solution.
First of all, let us define the energy $\mathcal{E}^N_l(t)$ as
$$
\mathcal{E}^N_l(t)\overset{def}{=}
\sum_{m=l}^{N}\|\na^m(n,v)\|_{L^2}^2
+\eta_2\sum_{m=l}^{N-1}\int\na^{m}v\cdot\na^{m+1}n dx, \quad 0\le l \le N,
$$
where $\eta_2$ is a small positive constant.
Due to the smallness of parameter $\eta_2$, we have the following equivalent realtion
\begin{equation}\label{emleq}
c_3 \|\nabla^l(n, v)\|_{H^{N-l}}^2
\le \mathcal{E}^N_l(t)
\le c_4 \|\nabla^l(n, v)\|_{H^{N-l}}^2 .
\end{equation}
where $c_3$ and $c_4$ are positive constant independent of time.
Finally, the relation \eqref{p} and the condition \eqref{phik} in Theorem \ref{them3}
lead us to obtain
\beno
\begin{split}
\sum_{k=0}^{N+1}\|(1+|x|)^{k}\na^k(\rho^*-\rho_{\infty})\|_{L^2\cap L^\infty}\leq \delta.
\end{split}
\eeno
This, together with the Sobolev interpolation inequality, yields
\begin{equation}\label{density-control}
\sum_{k=0}^{N+1}\|(1+|x|)^{k}\na^k(\rho^*-\rho_{\infty})\|_{L^p}\leq \delta,
\quad 2\leq p\leq+\infty.
\end{equation}
This inequality will be used frequently in Section \ref{rhox}.
Now we state the main result in this subsection.

\begin{prop}\label{nenp}
Under the assumptions in Theorem \ref{them3}, for any $1\leq l\leq N-1$, we have
\beq\label{eml}
\begin{split}	\f{d}{dt}\mathcal{E}^N_l(t)+\eta_2\sum_{m=l}^{N-1}\|\na^{m+1}n\|_{L^2}^2
+\sum_{m=l}^{N}\|\na^{m+1}v\|_{L^2}^2\leq 0.
\end{split}
\eeq
Here $\eta_2$ is a small positive constant.
\end{prop}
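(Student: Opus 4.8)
The plan is to run the standard energy scheme on the perturbation system \eqref{ns5}, summing over the derivative orders $m=l,\dots,N$, and then to use the small parameter $\eta_2$ to absorb the density cross-terms. Concretely, I would first apply $\na^m$ to the two equations of \eqref{ns5}, take the $L^2$-inner product of the $n$-equation with $\na^m n$ and of the $v$-equation with $\na^m v$, and add. The linear coupling terms $\gamma\na n$ and $\gamma\dive v$ cancel after integration by parts, so one is left with
\[
\frac12\frac{d}{dt}\big(\|\na^m n\|_{L^2}^2+\|\na^m v\|_{L^2}^2\big)+\mu_1\|\na^{m+1}v\|_{L^2}^2+\mu_2\|\na^m\dive v\|_{L^2}^2 = \int \na^m\widetilde S_1\,\na^m n\,dx+\int\na^m\widetilde S_2\cdot\na^m v\,dx.
\]
The nonlinear right-hand side must be bounded, using the Sobolev interpolation inequality (Lemma \ref{inter}), the commutator estimate (Lemma \ref{commutator}), the smallness of $\|(n,v)\|_{H^N}$ from \eqref{energy-thm-01}, and the weighted decay bound \eqref{density-control} for $\bar\rho=\rho^*-\rho_\infty$ and its derivatives (the Hardy inequality, Lemma \ref{hardy}, enters when a factor $\na\bar\rho$ is paired against a term carrying no extra decay). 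Each nonlinear term should be shown to be bounded by $\delta$ (or $\delta_0$) times $\sum_{m=l}^{N}(\|\na^{m+1}v\|_{L^2}^2+\|\na^{m}\na n\|_{L^2}^2)$ plus, at worst, lower-order derivative terms that are already controlled; the point is that every summand either carries a $\na^{m+1}v$ factor (absorbed by the dissipation) or a small constant from $\bar\rho$ or from $\|n\|_{L^\infty}\lesssim\|n\|_{H^N}\le\delta$.

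Next I would recover the missing dissipation for $\na^{m+1}n$. For each $m$ with $l\le m\le N-1$, apply $\na^m$ to the $v$-equation, take the inner product with $\na^{m+1}n$, and use the $n$-equation to rewrite $\frac{d}{dt}\int\na^m v\cdot\na^{m+1}n\,dx$; this produces $\gamma\|\na^{m+1}n\|_{L^2}^2$ with the favorable sign, at the cost of terms $\|\na^{m+1}v\|_{L^2}^2$, $\|\na^{m+2}v\|_{L^2}^2$ and nonlinear contributions, exactly the mechanism behind \eqref{in03} in the force-free case. Summing these over $m=l,\dots,N-1$, multiplying by the small constant $\eta_2$, and adding to the sum of the energy identities above gives
\[
\frac{d}{dt}\mathcal{E}^N_l(t)+\eta_2\sum_{m=l}^{N-1}\|\na^{m+1}n\|_{L^2}^2+\sum_{m=l}^{N}\|\na^{m+1}v\|_{L^2}^2\le C(\delta+\eta_2)\sum_{m=l}^{N}\|\na^{m+1}v\|_{L^2}^2+\cdots,
\]
and one chooses first $\eta_2$ small (so that $\mathcal{E}^N_l$ is equivalent to $\|\na^l(n,v)\|_{H^{N-l}}^2$ as in \eqref{emleq} and so that the $\na^{m+2}v$ terms generated by the cross-term, which telescope into the already-present $\na^{m+1}v$ dissipation for $m+1\le N$, are dominated), then $\delta$ small, to absorb all right-hand terms into the dissipative left-hand side. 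The potential-force term $-\h(n,\bar\rho)\na\bar\rho$ in $\widetilde S_2$ needs the identity \eqref{hg-relation}, $\h(n,\bar\rho)=\wg(n+\bar\rho)-\wg(\bar\rho)$, so that $\h$ vanishes when $n=0$ and hence carries a factor $|n|$ (plus $|\bar\rho|$-type remainders), which is what makes this term small.

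The main obstacle I anticipate is the careful bookkeeping of the $\bar\rho$-dependent terms: unlike the force-free problem \eqref{ns2}, here the equilibrium depends on $x$, so differentiating $\widetilde S_1=-\frac{\mu_1\gamma}{\mu}\dive[(n+\bar\rho)v]$ and $\widetilde S_2$ produces numerous terms in which derivatives fall on $\bar\rho$, and these do not decay in time on their own. The resolution is to exploit the spatial weights in \eqref{phik}/\eqref{density-control}: a factor $\na^{j}\bar\rho$ is paired with a weight $(1+|x|)^{-j}$ extracted from the companion factor via the Hardy inequality, turning the product into something bounded by $\delta$ times an $H^{N-l}$-norm of $(n,v)$. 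One must check that the top-order case $m=N$ (where only $\na^{N+1}v$ dissipation is available and there is no $\na^{N+1}n$ dissipation) still closes — here the $n$-equation itself, $n_t=-\gamma\dive v+\widetilde S_1$, together with the cross-term only being summed up to $m=N-1$, is exactly the structure that keeps things consistent, and no $\na^{N+1}n$ term is ever needed on the right. Once every nonlinear and every $\bar\rho$-term is bounded as above, choosing $\eta_2$ and then $\delta$ suitably small yields \eqref{eml}.
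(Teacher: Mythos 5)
Your proposal follows essentially the same route as the paper: the paper proves exactly the three ingredients you describe (the order-$m$ energy estimates for $l\le m\le N-1$ in Lemma \ref{enn-1}, the top-order estimate in Lemma \ref{enn}, and the cross-term estimate recovering $\|\na^{m+1}n\|_{L^2}^2$ in Lemma \ref{ennjc}), and then combines them with the same ordering of smallness, first $\eta_2$ then $\delta$. You also correctly identify the two genuine technical points — the Hardy-inequality pairing of $\na^j\bar\rho$ against weighted factors via \eqref{phik}/\eqref{density-control}, and the use of \eqref{hg-relation} to extract a factor of $|n|$ from $\h$ — so the proposal is sound.
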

Due to the energy estimate \eqref{energy-thm-01},
there exists a positive constant $C$ such that for any $k\geq 1$,
\beno
\begin{split}
	|\wf(n+\bar\rho)|\leq C|n+\bar\rho|,\quad |\wg(n+\bar\rho)|\leq C|n+\bar\rho|,\quad
	|\wf^{(k)}(n+\bar\rho)|\leq C,\quad |\wg^{(k)}(n+\bar\rho)|\leq C.
\end{split}
\eeno

Next we give three lemmas as follows, which is the key to prove Proposition \ref{nenp}. The first one is the basic energy estimate for $k-th$ $(1\leq k\leq N-1)$ order spatial derivative of solution.

\begin{lemm}\label{enn-1}
Under the assumptions in Theorem \ref{them3}, for $1\leq k\leq N-1$, we have
\beq\label{en1}
\f{d}{dt}\|\na^k(n,v)\|_{L^2}^2+\|\na^{k+1}v\|_{L^2}^2
\leq C\delta \|\na^{k+1}(n,v)\|_{L^2}^2,
\eeq
where $C$ is a positive constant independent of time.
\end{lemm}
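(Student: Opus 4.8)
The plan is to apply the differential operator $\na^k$ to the first two equations of the perturbation system \eqref{ns5}, take the $L^2$ inner product of the resulting equations with $\na^k n$ and $\na^k v$ respectively, and add. The linear symmetric-hyperbolic part $\gamma\na n$ and $\gamma\dive v$ cancel after integration by parts (they are skew-adjoint with respect to the coupling), so the only surviving linear contribution on the left is the viscous dissipation, which produces $\mu_1\|\na^{k+1}v\|_{L^2}^2+\mu_2\|\na^{k}\dive v\|_{L^2}^2\gtrsim \|\na^{k+1}v\|_{L^2}^2$ after using the physical condition \eqref{physical-condition}. Thus, after these routine manipulations, it remains to bound the nonlinear terms
\[
\int \na^k\widetilde S_1\cdot\na^k n\,dx \quad\text{and}\quad \int \na^k\widetilde S_2\cdot\na^k v\,dx
\]
by $C\delta\|\na^{k+1}(n,v)\|_{L^2}^2$ modulo an $\eps\|\na^{k+1}v\|_{L^2}^2$ absorbable into the dissipation.

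For the $\widetilde S_1$ term, since $\widetilde S_1=-\tfrac{\mu_1\gamma}{\mu}\dive[(n+\bar\rho)v]$ carries a derivative, I would integrate by parts once to move that derivative onto $\na^k n$, then expand $\na^k[(n+\bar\rho)v]$ by the Leibniz rule and use Lemma \ref{commutator} together with the Gagliardo--Nirenberg inequality of Lemma \ref{inter} and the smallness $\|(n,v)\|_{H^N}\le C\delta$ from \eqref{energy-thm-01}; the terms involving $\bar\rho=\rho^*-\rho_\infty$ are handled by the weighted bound \eqref{density-control} (and, where $\bar\rho$ appears undifferentiated but must be paired with a negative power of $1+|x|$, by the Hardy inequality of Lemma \ref{hardy}). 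The $\widetilde S_2$ term splits into four pieces: the convection term $v\cdot\na v$, the two quasilinear pieces $\wf(n+\bar\rho)(\mu_1\tri v+\mu_2\na\dive v)$ and $\wg(n+\bar\rho)\na n$, and the inhomogeneous piece $\h(n,\bar\rho)\na\bar\rho$. Each of the first three is estimated exactly as in the no-force case (cf. the estimates \eqref{in01}--\eqref{in03} quoted from \cite{guo2012}): commutator estimates peel off the top derivative, $\|\wf\|_{L^\infty},\|\wg\|_{L^\infty}\le C\delta$ control the leading term, and lower-order factors are absorbed into $C\delta\|\na^{k+1}(n,v)\|_{L^2}^2$ by Gagliardo--Nirenberg.

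The genuinely new piece, and the one I expect to be the main obstacle, is the force-generated term $\int\na^k[\h(n,\bar\rho)\na\bar\rho]\cdot\na^k v\,dx$, because it is not quadratically small in $(n,v)$ on its face — it is linear in the forcing. The key observation is the identity \eqref{hg-relation}, $\h(n,\bar\rho)=\wg(n+\bar\rho)-\wg(\bar\rho)$, which shows $\h(n,\bar\rho)=O(|n|)$ uniformly (by the mean value theorem, since $\wg$ is $C^1$), so the product $\h(n,\bar\rho)\na\bar\rho$ is in fact of the schematic form $n\cdot(\text{smooth})\cdot\na\bar\rho$, i.e. quadratically small after all. Expanding $\na^k$ by Leibniz, every term contains at least one factor of $\na^{j}n$ (for some $0\le j\le k$) and a factor $\na^{k-j}\big(\text{(function of }n,\bar\rho)\,\na\bar\rho\big)$; the latter is bounded in $L^\infty$ or $L^2$ using \eqref{density-control} (which controls $\|(1+|x|)^{m}\na^m\bar\rho\|_{L^2\cap L^\infty}$ for $0\le m\le N+1$, hence in particular $\|\na^{m}\na\bar\rho\|_{L^2\cap L^\infty}\le\delta$) and the chain-rule bounds $|\wg^{(m)}(n+\bar\rho)|\le C$. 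Distributing with Hölder and applying Gagliardo--Nirenberg to trade intermediate derivatives for $\|\na^{k+1}n\|_{L^2}$ and $\|(n,v)\|_{L^2}\le C\delta$, one arrives at the bound $C\delta\|\na^{k+1}(n,v)\|_{L^2}^2$; when a stray low factor of $n$ or $\bar\rho$ cannot be differentiated it is estimated in $L^\infty$ by $C\delta$ via Sobolev embedding, again producing the smallness. Collecting all contributions, choosing $\eps$ small to absorb $\eps\|\na^{k+1}v\|_{L^2}^2$ into the viscous term, yields \eqref{en1}. I would remark that the only place the weighted hypothesis \eqref{phik} is essential (as opposed to a mere $H^{N+1}$ bound on $\phi$) is in controlling the Hardy-type pairings where $\bar\rho$ appears without enough derivatives and must borrow decay in $|x|$ from the weight.
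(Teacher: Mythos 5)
Your proposal follows essentially the same route as the paper: apply $\na^k$, test with $\na^k(n,v)$, let the hyperbolic coupling cancel, integrate by parts once in the $\widetilde S_1$ term to pair with $\na^{k+1}n$, and control all nonlinearities by $C\delta\|\na^{k+1}(n,v)\|_{L^2}^2$ via Gagliardo--Nirenberg, the commutator/Leibniz expansion, the weighted bounds \eqref{density-control} combined with Hardy's inequality for the $\bar\rho$-factors, and crucially the identity \eqref{hg-relation} with the mean value theorem to see that $\h(n,\bar\rho)\na\bar\rho$ is genuinely quadratic. This matches the paper's decomposition into $I_1$--$I_4$ and $J_1$--$J_4$ and its treatment of each piece, so no substantive differences to report.
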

\begin{proof}
Applying differential operator $\na^k$ to $\eqref{ns5}_1$
and $\eqref{ns5}_2$, multiplying the resulting equations by $\na^kn$ and $\na^kv$, respectively, then integrating over $\mathbb{R}^3$, one easily shows that\beq\label{ehk}
	\begin{split}
		\f12\f{d}{dt}\|\na^k(n,v)\|_{L^2}^2+\mu_1\|\na^{k+1} v\|_{L^2}^2+\mu_2\|\na^k\dive v\|_{L^2}^2
		= \int \na^k\widetilde{S}_1\cdot \na^kn dx+\int \na^k\widetilde{S}_2\cdot \na^kv dx.
	\end{split}
	\eeq
Integrating by part and using the definition of $\widetilde{S}_1$, it holds
\beq\label{ks1}
\begin{aligned}
\int \na^k\widetilde{S}_1\cdot \na^kn dx
\leq &C\int \na^{k-1}( v\cdot\na n )\cdot \na^{k+1}n dx
     +C\int \na^{k-1}(n\dive v )\cdot \na^{k+1}n dx\\
    & +C\int \na^{k-1}(v\cdot\na\bar\rho )\cdot \na^{k+1}n dx
     +C\int \na^{k-1}(\bar\rho \dive v)\cdot \na^{k+1}n dx\\
    \overset{def}{=}&I_1+I_2+I_3+I_4.
\end{aligned}
\eeq
It follows from H{\"o}lder and Sobolev interpolation inequalities that
\beq\label{I1}
\begin{split}
|I_1| \leq C
\sum_{0\leq l\leq k-1}\|\na^{k-1-l}v\|_{L^3}\|\na^{l+1}n\|_{L^6}\|\na^{k+1}n\|_{L^2}.
\end{split}
\eeq
The Sobolev interpolation inequality \eqref{Sobolev} in Lemma \ref{inter} yields directly
\beq\label{nanak}
\|\na^{k-1-l}v\|_{L^3}\|\na^{l+1}v\|_{L^6}
\leq C\|\na^{\al}v\|_{L^2}^{\f{l+2}{k+1}}\|\na^{k+1}v\|_{L^2}^{1-\f{l+2}{k+1}}
     \|v\|_{L^2}^{1-\f{l+2}{k+1}}\|\na^{k+1}v\|_{L^2}^{\f{l+2}{k+1}},
\eeq
where
$$\al=\f{k+1}{2(l+2)}\leq \f{k+1}{4}\leq \f{N}{4}.$$
Substituting estimate \eqref{nanak} into \eqref{I1}, it holds true
\beq\label{i1}
\begin{split}
|I_1| \leq C\sum_{0\leq l\leq k-1}
\|\na^{\al}v\|_{L^2}^{\f{l+2}{k+1}}\|n\|_{L^2}^{1-\f{l+2}{k+1}}
\|\na^{k+1}(n,v)\|_{L^2}^2
\leq C\delta \|\na^{k+1}(n,v)\|_{L^2}^2.
\end{split}
\eeq
Similarly, it is easy to check that
\beq\label{i2}
\begin{split}
|I_2| \leq C\sum_{0\leq l\leq k-1}
\|\na^{k-1-l}n\|_{L^3}\|\na^{l+1}v\|_{L^6}\|\na^{k+1}n\|_{L^2}\leq C\delta \|\na^{k+1}(n,v)\|_{L^2}^2.
	\end{split}
	\eeq
The application of  H{\"o}lder inequality implies directly
\beq\label{ii3}
\begin{split}
|I_3|\leq C\sum_{0\leq l\leq k-1}
         \|(1+|x|)^{l+1}\na^{l+l}\bar\rho\|_{L^3}
         \|\f{\na^{k-1-l}v}{(1+|x|)^{l+1}}\|_{L^6}\|\na^{k+1}n\|_{L^2}.
\end{split}
\eeq
Since
\beq\label{vl6}
\|\f{\na^{k-1-l}v}{(1+|x|)^{l+1}}\|_{L^6}\leq C \|\na\Big(\f{\na^{k-1-l}v}{(1+|x|)^{l+1}}\Big)\|_{L^2}\leq C \|\f{\na^{k-l}v}{(1+|x|)^{l+1}}\|_{L^2}+C\|\f{\na^{k-1-l}v}{(1+|x|)^{l+2}}\|_{L^2},
\eeq
one can deduce from \eqref{ii3} and Hardy inequality that
\beq\label{i3}
\begin{split}
|I_3|\leq C\delta\sum_{0\leq l\leq k-1}
\Big(\|\f{\na^{k-l}v}{(1+|x|)^{l+1}}\|_{L^2}
+\|\f{\na^{k-1-l}v}{(1+|x|)^{l+2}}\|_{L^2}\Big)\|\na^{k+1}n\|_{L^2}\leq C\delta\|\na^{k+1}(n,v)\|_{L^2}^2.
\end{split}
\eeq
Similarly, it is easy to check that
\beq\label{i4}
\begin{split}
|I_4| \leq C\sum_{0\leq l\leq k-1}
\|(1+|x|)^{k-1-l}\na^{k-1-l}\bar\rho\|_{L^3}
\|\f{\na^{l+1}v}{(1+|x|)^{k-1-l}}\|_{L^6}\|\na^{k+1}n\|_{L^2}
\leq C\delta\|\na^{k+1}(n,v)\|_{L^2}^2.
\end{split}
\eeq
Substituting \eqref{i1}, \eqref{i2}, \eqref{i3} and \eqref{i4}
into \eqref{ks1}, it holds
\beq\label{s1}	
\begin{split}
\left|\int \na^k\widetilde{S}_1\cdot \na^kn dx \right|
\leq & C\delta\|\na^{k+1}(n,v)\|_{L^2}^2.
\end{split}
\eeq
Recall the definition of $\widetilde{S}_2$,
we obtain after integration by part that
\beno
\begin{split}
\int \na^k\widetilde{S}_2\cdot \na^kv dx
=&-\int \na^{k-1}\Big(\f{\mu_1\gamma}{\mu}v\cdot \na v \Big)
     \cdot \na^{k+1}v  dx
  -\int \na^{k-1}\Big\{ \wf(n+\bar\rho)\big(\mu_1\tri v+\mu_2\na\dive v\big)\Big\}
      \cdot \na^{k+1}v dx\\
&-\int \na^{k-1}\Big(\wg(n+\bar\rho) \na n \Big)\cdot \na^{k+1}v dx
-\int \na^{k-1}\Big(\h(n,\bar\rho)\na\bar\rho\Big)\cdot \na^{k+1}v dx\\
\overset{def}{=}&J_1+J_2+J_3+J_4.
\end{split}
\eeno
The Sobolev inequality and \eqref{nanak} yield directly
\beq
\begin{split}\label{j1}
|J_1| \leq C\sum_{0\leq l\leq k-1}
       \|\na^{k-1-l}v\|_{L^3}\|\na^{l+1}v\|_{L^6}\|\na^{k+1}v\|_{L^2}
		\leq C\delta \|\na^{k+1}v\|_{L^2}^2.
\end{split}
\eeq
Next, we begin to give the estimate for the term $J_2$.
Notice that for any $1 \le l \le k-1$,
\beq\label{widef}
\begin{split}
\na^l\wf(n+\bar\rho)=\text{a sum of products}~~\wf^{\gamma_1,\gamma_2,\cdots,\gamma_{i+j}}(n+\bar\rho)
\na^{\gamma_1}n\cdots\na^{\gamma_i}n\na^{\gamma_{i+1}}\bar\rho\cdots\na^{\gamma_{i+j}}\bar\rho
\end{split}
\eeq
with the functions $\wf^{\gamma_1,\gamma_2,\cdots,\gamma_{i+j}}(n+\bar\rho)$ are some derivatives of $\wf(n+\bar\rho)$ and $1\leq \gamma_{\be}\leq l$, $\be=1,2,\cdots,i+j$; $\gamma_1+\gamma_2+\cdots+\gamma_i=m$, $0\leq m\leq l$ and $\gamma_1+\gamma_2+\cdots+\gamma_{i+j}=l$.
Then $J_{2}$ is split up into three terms as follows:\beq\label{J2}
\begin{split}
|J_2|\leq&C\int|\wf(n+\bar\rho)||\na^{k+1}v|^2dx
       +C\sum_{1\leq l\leq k-1}\sum_{\gamma_1+\cdots+\gamma_j=l}
       \int|\na^{\gamma_1}\bar\rho|\cdots|\na^{\gamma_j}\bar\rho||\na^{k+1-l}v||\na^{k+1}v|dx\\
	   &+C\sum_{1\leq l\leq k-1}\sum_{\substack{\gamma_1+\cdots+\gamma_{i+j}=l
        \\\gamma_1+\cdots+\gamma_{i}=m\\1\leq m\leq l}}
        \int|\na^{\gamma_1}n|\cdots|\na^{\gamma_i}n|
         |\na^{\gamma_{i+1}}\bar\rho|\cdots|\na^{\gamma_{i+j}}\bar\rho|
         |\na^{k+1-l}v||\na^{k+1}v|dx\\
	 \overset{def}{=} & J_{21}+J_{22}+J_{23}.
\end{split}
\eeq
Then it is easy to verify that
\beno
\begin{split}
|J_{21}| \leq C\big(\|n\|_{L^\infty}+\|\bar\rho \|_{L^\infty}\big)\|\na^{k+1}v\|_{L^2}^2
		 \leq C\delta \|\na^{k+1}v\|_{L^2}^2.
\end{split}
\eeno
We address the second term $J_{22}$ by using Hardy inequality
\beno
\begin{split}
|J_{22}|
\leq& C\sum_{1\leq l\leq k-1}\sum_{\gamma_1+\cdots+\gamma_j=l}
\|(1+|x|)^{\gamma_1}\na^{\gamma_1}\bar\rho\|_{L^\infty}
\cdots\|(1+|x|)^{\gamma_j}\na^{\gamma_j}\bar\rho\|_{L^\infty}
\|\f{\na^{k+1-l}v}{(1+|x|)^{l}}\|_{L^2}\|\na^{k+1}v\|_{L^2}
\leq C\delta\|\na^{k+1}v\|_{L^2}^2.
\end{split}
\eeno
In view of the Sobolev interpolation inequality \eqref{Sobolev}
in Lemma \ref{inter} and Hardy inequality, we deduce that
\beno
\begin{split}
|J_{23}| \leq& C\sum_{1\leq l\leq k-1}\sum_{\substack{\gamma_1+\cdots+\gamma_{i+j}=l\\\gamma_1+\cdots+\gamma_{i}=m\\1\leq m\leq l}}\|\na^{\gamma_1}n\|_{L^\infty}\cdots\|\na^{\gamma_i}n\|_{L^\infty}\|(1+|x|)^{\gamma_{i+1}}\na^{\gamma_{i+1}}\bar\rho\|_{L^\infty}\cdots\|(1+|x|)^{\gamma_{i+j}}\na^{\gamma_{i+j}}\bar\rho\|_{L^\infty}\\
		&\times\|\f{\na^{k+1-l}v}{(1+|x|)^{l-m}}\|_{L^2}\|\na^{k+1}v\|_{L^2}\\
		\leq&C\delta\sum_{1\leq l\leq k-1}\sum_{\substack{\gamma_1+\cdots+\gamma_{i+j}=l\\\gamma_1+\cdots+\gamma_{i}=m\\1\leq m\leq l}}\|n\|_{L^2}^{1-\f{3+2\gamma_1}{2(k+1)}}\|\na^{k+1}n\|_{L^2}^{\f{3+2\gamma_1}{2(k+1)}}\cdots\|n\|_{L^2}^{1-\f{3+2\gamma_{i}}{2(k+1)}}\|\na^{k+1}n\|_{L^2}^{\f{3+2\gamma_i}{2(k+1)}}\|\na^{k+1-m}v\|_{L^2}\|\na^{k+1}v\|_{L^2}\\
		\leq&C\delta\sum_{1\leq l\leq k-1}\sum_{\substack{\gamma_1+\cdots+\gamma_{i+j}=l\\\gamma_1+\cdots+\gamma_{i}=m\\1\leq m\leq l}}\|n\|_{L^2}^{1-\f{3i+2m}{2(k+1)}}\|\na^{k+1}n\|_{L^2}^{\f{3i+2m}{2(k+1)}}\|\na^{\al}v\|_{L^2}^{\f{3i+2m}{2(k+1)}}\|\na^{k+1}v\|_{L^2}^{1-\f{3i+2m}{2(k+1)}}\|\na^{k+1}v\|_{L^2}\\
		\leq&C\delta\|\na^{k+1}(n,v)\|_{L^2}^{2},
\end{split}
\eeno
where
$$\al=\f{3i(k+1)}{3i+2m}\leq \f35(k+1)\leq \f35N\leq N,$$
provided $i\geq 1$, $m\geq 1$ and $i\leq m$.
Then, substituting the estimates of $J_{21}, J_{22}$ and $J_{23}$
into \eqref{J2}, we have
\beq\label{j2}
|J_2|\leq C\delta\|\na^{k+1}(n,v)\|_{L^2}^{2}.
\eeq
It is easy to check that
\beq\label{J3}
\begin{split}
|J_3|
\leq &C\int|n+\bar\rho||\na^{k}n||\na^{k+1}v|dx
       +C\sum_{1\leq l\leq k-1}\sum_{\gamma_1+\cdots+\gamma_j=l}
       \int|\na^{\gamma_1}\bar\rho|\cdots|\na^{\gamma_j}\bar\rho||\na^{k-l}n||\na^{k+1}v|dx\\
		&+C\sum_{1\leq l\leq k-1}
           \sum_{\substack{\gamma_1+\cdots+\gamma_{i+j}=l
                  \\\gamma_1+\cdots+\gamma_{i}=m\\1\leq m\leq l}}
         \int|\na^{\gamma_1}n|\cdots|\na^{\gamma_i}n||\na^{\gamma_{i+1}}\bar\rho|
         \cdots|\na^{\gamma_{i+j}}\bar\rho||\na^{k-l}n||\na^{k+1}v|dx\\
		\overset{def}{=}&J_{31}+J_{32}+J_{33}.
\end{split}
\eeq
By Sobolev inequality, it is easy to check that
\beno
\begin{split}
|J_{31}|
\leq C\|(n+\bar\rho)\|_{L^3}\|\na^{k}n\|_{L^6}\|\na^{k+1}v\|_{L^2}
\leq C\delta\|\na^{k+1}(n,v)\|_{L^2}^2.
\end{split}
\eeno
Similar to estimate \eqref{vl6}, we apply Hardy inequality to obtain
\beno
\begin{split}
|J_{32}|
\leq&C \sum_{1\leq l\leq k}\sum_{\gamma_1+\cdots+\gamma_j=l}\|(1+|x|)^{\gamma_1}\na^{\gamma_1}\bar \rho\|_{L^{3}}\|(1+|x|)^{\gamma_2}\na^{\gamma_2}\bar \rho\|_{L^{\infty}}\cdots\|(1+|x|)^{\gamma_j}\na^{\gamma_j}\bar \rho\|_{L^{\infty}}\|\f{\na^{k-l}n}{(1+|x|)^{l}}\|_{L^6}\|\na^{k+1}v\|_{L^2}\\
		\leq& C\delta\|\na^{k+1}(n,v)\|_{L^2}^2 .
\end{split}
\eeno
The application of Sobolev interpolation inequality \eqref{Sobolev}
in Lemma \ref{inter} yields directly
\beno
\begin{split}
|J_{33}|\leq& C\sum_{1\leq l\leq k-1}\sum_{\substack{\gamma_1+\cdots+\gamma_{i+j}=l\\\gamma_1+\cdots+\gamma_{i}=m\\1\leq m\leq l}}\|\na^{\gamma_1}n\|_{L^\infty}\cdots\|\na^{\gamma_i}n\|_{L^\infty}\|(1+|x|)^{\gamma_{i+1}}\na^{\gamma_{i+1}}\bar\rho\|_{L^\infty}\cdots\|(1+|x|)^{\gamma_{i+j}}\na^{\gamma_{i+j}}\bar\rho\|_{L^\infty}\\
		&\times\|\f{\na^{k-l}n}{(1+|x|)^{l-m}}\|_{L^2}\|\na^{k+1}v\|_{L^2}\\
		\leq&C\delta\sum_{1\leq l\leq k-1}\sum_{\substack{\gamma_1+\cdots+\gamma_{i+j}=l\\\gamma_1+\cdots+\gamma_{i}=m\\1\leq m\leq l}}\|n\|_{L^2}^{1-\f{3+2\gamma_1}{2(k+1)}}\|\na^{k+1}n\|_{L^2}^{\f{3+2\gamma_1}{2(k+1)}}\cdots\|n\|_{L^2}^{1-\f{3+2\gamma_{i}}{2(k+1)}}\|\na^{k+1}n\|_{L^2}^{\f{3+2\gamma_i}{2(k+1)}}\|\na^{k-m}n\|_{L^2}\|\na^{k+1}v\|_{L^2}\\
		\leq&C\delta\sum_{1\leq l\leq k-1}\sum_{\substack{\gamma_1+\cdots+\gamma_{i+j}=l\\\gamma_1+\cdots+\gamma_{i}=m\\1\leq m\leq l}}\|n\|_{L^2}^{1-\f{3i+2m}{2(k+1)}}\|\na^{k+1}n\|_{L^2}^{\f{3i+2m}{2(k+1)}}\|\na^{\al}n\|_{L^2}^{\f{3i+2m}{2(k+1)}}\|\na^{k+1}n\|_{L^2}^{1-\f{3i+2m}{2(k+1)}}\|\na^{k+1}v\|_{L^2}\\
		\leq&C\delta\|(\na^{k+1}n,\na^{k+2}v)\|_{L^2}^{2},
\end{split}
\eeno
where
$$\al=\f{(3i-2)(k+1)}{3i+2m}\leq \f35(k+1)\leq \f35N\leq N,$$
provided $i\geq 1$, $m\geq 1$ and $i\leq m$.
Substituting the estimates of term $J_{31}, J_{32}$
and $J_{33}$ into \eqref{J3}, it holds
\beq\label{j3}
|J_3|\leq C\delta \|\na^{k+1}(n,v)\|_{L^2}^{2}.
\eeq
	Before estimating the last term $J_4$, we want to show that $\h(n,\bar\rho)$ and its spatial derivatives can be controlled by a sum of products of some terms of $n$ and its derivatives.
	For this purpose, it follows from Taylor expansion that \beno
	\h(n,\bar\rho)=\f{p'(\rho_{\infty})+p''(\rho_{\infty})\rho_{\infty}}{(\bar\rho+\rho_{\infty})(n+\bar\rho+\rho_{\infty})}n+o(|n|).
	\eeno
	Using the fact that $p(\rho)$ is smooth in a neighborhood of $\rho_{\infty}$ with $p'(\rho_{\infty})>0$, one obtains that\beq\label{h}
	|\h(n,\bar\rho)|\leq C|n|.
	\eeq
	Next, let us to deal with the derivatives of $\h$. In view of the definition of $\h$ and $\wg$, it then follows from \eqref{hg-relation} that for any $l\geq 1$,\beq\label{wideh}
	\begin{split}
		&\na^l\h(n,\bar\rho)
		=\na^l\wg(n+\bar\rho)-\na^l\wg(\bar\rho)\\
		=&\text{a sum of products}~~\big\{\wg^{\gamma_1,\gamma_2,\cdots,\gamma_{i+j}}(n+\bar\rho)\na^{\gamma_1}n\cdots\na^{\gamma_i}n\na^{\gamma_{i+1}}\bar\rho\cdots\na^{\gamma_{i+j}}\bar\rho\\
		&\quad-\wg^{\gamma_1,\gamma_2,\cdots,\gamma_{i+j}}(\bar\rho)\na^{\gamma_1}\bar\rho\cdots\na^{\gamma_{i+j}}\bar\rho\big\}
	\end{split}
    \eeq
	with the functions $\wg^{\gamma_1,\gamma_2,\cdots,\gamma_{i+j}}$ are some derivatives of $\wg$ and $1\leq \gamma_{\be}\leq l$, $\be=1,2,\cdots,i+j$; $\gamma_1+\gamma_2+\cdots+\gamma_i=m$, $0\leq m\leq l$ and $\gamma_1+\gamma_2+\cdots+\gamma_{i+j}=l$.
	For the case that $m=0$, we use mean value theorem to find that there exists a $\xi$ between $\bar\rho$ and $n+\bar\rho$, such that\beno
	\begin{split}
		&\wg^{\gamma_1,\gamma_2,\cdots,\gamma_{j}}(n+\bar\rho)\na^{\gamma_1}\bar\rho\cdots\na^{\gamma_{j}}\bar\rho-\wg^{\gamma_1,\gamma_2,\cdots,\gamma_{j}}(\bar\rho)\na^{\gamma_1}\bar\rho\cdots\na^{\gamma_{j}}\bar\rho
		=\wg^{(\gamma_1,\gamma_2,\cdots,\gamma_{j})+1}(\xi)n\na^{\gamma_1}\bar\rho\cdots\na^{\gamma_{j}}\bar\rho,
	\end{split}
	\eeno
    which, together with \eqref{wideh}, yields the following estimate
    \beq\label{h2}
	\begin{split}
		|\na^l\h(n,\bar\rho)|\leq C|n|\sum_{\gamma_1+\cdots+\gamma_{j}=l}|\na^{\gamma_{1}}\bar\rho|\cdots|\na^{\gamma_{j}}\bar\rho|+ C\sum_{\substack{\gamma_1+\cdots+\gamma_{i+j}=l\\\gamma_1+\cdots+\gamma_{i}=m\\1\leq m\leq l}}|\na^{\gamma_1}n|\cdots|\na^{\gamma_i}n||\na^{\gamma_{i+1}}\bar\rho|\cdots|\na^{\gamma_{i+j}}\bar\rho|.
	\end{split}
	\eeq
	This, together with the estimate \eqref{h}, gives\beq\label{j4}
	\begin{split}
		|J_4|\leq&C\int|\h(n,\bar\rho)||\na^{k}\bar\rho||\na^{k+1}v|dx+C\sum_{1\leq l\leq k-1}\int|\na^{l}\h(n,\bar\rho)||\na^{k-l}\bar\rho||\na^{k+1}v|dx\\
		\leq &C\int|n||\na^{k}\bar\rho||\na^{k+1}v|dx+C\sum_{1\leq l\leq k-1}\sum_{\gamma_1+\cdots+\gamma_{j}=l}\int|n||\na^{\gamma_1}\bar\rho|\cdots|\na^{\gamma_j}\bar\rho||\na^{k-l}\bar\rho||\na^{k+1}v|dx\\
		&\quad+C\sum_{1\leq l\leq k-1}\sum_{\substack{\gamma_1+\cdots+\gamma_{i+j}=l\\\gamma_1+\cdots+\gamma_{i}=m\\1\leq m\leq l}}\int|\na^{\gamma_1}n|\cdots|\na^{\gamma_i}n||\na^{\gamma_{i+1}}\bar\rho|\cdots|\na^{\gamma_{i+j}}\bar\rho||\na^{k-l}\bar\rho||\na^{k+1}v|dx\\
		\overset{def}{=}&J_{41}+J_{42}+J_{43}.
	\end{split}
	\eeq
	By virtue of Sobolev and Hardy inequalities, it is easy to deduce
	\beq\label{j41}
	\begin{split}
		J_{41}+J_{42}\leq& C \|\f{n}{(1+|x|)^{k}}\|_{L^6}\Big(\|(1+|x|)^{k}\na^{k}\bar\rho\|_{L^3}+\sum_{1\leq l\leq k-1}\sum_{\gamma_1+\cdots+\gamma_{j}=l}\|(1+|x|)^{\gamma_{1}}\na^{\gamma_{1}}\bar\rho\|_{L^\infty}\\
		&\quad\cdots\|(1+|x|)^{\gamma_{j}}\na^{\gamma_{j}}\bar\rho\|_{L^\infty}\|(1+|x|)^{k-l}\na^{k-l}\bar\rho\|_{L^3}\Big)\|\na^{k+1}v\|_{L^2}\\
		\leq&C\delta\Big(\|\f{\na n}{(1+|x|)^{k}}\|_{L^2}+\|\f{n}{(1+|x|)^{k+1}}\|_{L^2}\Big)\|\na^{k+1}v\|_{L^2}\\
		\leq& C\delta \|\na^{k+1}(n,v)\|_{L^2}^2.
	\end{split}
	\eeq
	It then follows from Sobolev inequality that
	\beq\label{J43}
	\begin{split}
		J_{43}\leq& C\sum_{1\leq l\leq k-1}\sum_{\substack{\gamma_1+\cdots+\gamma_{1+j}=l\\\gamma_1=m\\1\leq m\leq l}}\|\f{\na^{\gamma_1}n}{(1+|x|)^{k-m}}\|_{L^6}\|(1+|x|)^{\gamma_{2}}\na^{\gamma_{2}}\bar\rho\|_{L^\infty}\cdots\|(1+|x|)^{\gamma_{1+j}}\na^{\gamma_{1+j}}\bar\rho\|_{L^\infty}\\
		&\times\|(1+|x|)^{k+1-l}\na^{k-l}\bar\rho\|_{L^3}\|\na^{k+1}v\|_{L^2}+C\sum_{1\leq l\leq k}\sum_{\substack{\gamma_1+\cdots+\gamma_{i+j}=l\\\gamma_1+\cdots+\gamma_{i}=m\\1\leq m\leq l,~i\geq2}}\|\f{\na^{\gamma_1}n}{(1+|x|)^{k-m}}\|_{L^6}\|\na^{\gamma_2}n\|_{L^\infty}\cdots\\
		&\times\|\na^{\gamma_i}n\|_{L^\infty}\|(1+|x|)^{\gamma_{i+1}}\na^{\gamma_{i+1}}\bar\rho\|_{L^\infty}\cdots\|(1+|x|)^{\gamma_{i+j}}\na^{\gamma_{i+j}}\bar\rho\|_{L^\infty}\|(1+|x|)^{k+1-l}\na^{k-l}\bar\rho\|_{L^3}\|\na^{k+1}v\|_{L^2}\\
		\overset{def}{=}&J_{431}+J_{432}.
	\end{split}
	\eeq
	Thanks to Hardy inequality, one can deduce that
	\beno
	\begin{split}
		J_{431}\leq& C\delta\sum_{1\leq l\leq k-1}\sum_{\substack{\gamma_1+\cdots+\gamma_{1+j}=l\\\gamma_1=m\\1\leq m\leq l}}\Big(\|\f{\na^{\gamma_1+1}n}{(1+|x|)^{k-m}}\|_{L^2}+\|\f{\na^{\gamma_1}n}{(1+|x|)^{k-m+1}}\|_{L^2}\Big)\|\na^{k+1}v\|_{L^2}\\
		\leq& C\delta\|\na^{k+1}n\|_{L^2}\|\na^{k+1}v\|_{L^2}\leq C\delta\|\na^{k+1}(n,v)\|_{L^2}^2.
	\end{split}
	\eeno
	With the help of Sobolev interpolation inequality \eqref{Sobolev}
	in Lemma \ref{inter} and Hardy ineuqlity, one obtains
	\beno
	\begin{split}
		J_{432}\leq&C\delta\sum_{1\leq l\leq k-1}\sum_{\substack{\gamma_1+\cdots+\gamma_{i+j}=l\\\gamma_1+\cdots+\gamma_{i}=m\\1\leq m\leq l,~i\geq2}}\|\na^{k+1-m+\gamma_1}n\|_{L^2}\|n\|_{L^2}^{1-\f{3+2\gamma_2}{2(k+1)}}\|\na^{k+1}n\|_{L^2}^{\f{3+2\gamma_2}{2(k+1)}}\cdots\|n\|_{L^2}^{1-\f{3+2\gamma_i}{2(k+1)}}\|\na^{k+1}n\|_{L^2}^{\f{3+2\gamma_i}{2(k+1)}}\|\na^{k+1}v\|_{L^2}\\
		\leq&C\delta\sum_{1\leq l\leq k-1}\sum_{\substack{\gamma_1+\cdots+\gamma_{i+j}=l\\\gamma_1+\cdots+\gamma_{i}=m\\1\leq m\leq l,~i\geq2}}\|\na^{\al}n\|_{L^2}^{\theta}\|\na^{k+1}n\|_{L^2}^{1-\theta}\|n\|_{L^2}^{1-\theta}\|\na^{k+1}n\|_{L^2}^{\theta}\|\na^{k+1}v\|_{L^2}
		\leq C\delta\|\na^{k+1}(n,v)\|_{L^2}^2,
	\end{split}
	\eeno
	where
	$$\theta=\f{3(i-1)+2(m-\gamma_1)}{2(k+1)},\quad\al=\f{3(i-1)(k+1)}{3(i-1)+2(m-\gamma_1)}\leq \f{3}{5}(k+1)\leq \f35 N\leq N,$$
	provided that $i\geq2$ and $i-1\leq m-\gamma_1$.
    Inserting the estimates of $J_{431}$ and $J_{432}$ into $\eqref{J43}$, it follows immediately
	\beq\label{j43}
	J_{43}\leq C\delta\|\na^{k+1}(n,v)\|_{L^2}^2.
	\eeq
	Thus, substituting \eqref{j41} and \eqref{j43} into \eqref{j4}, we deduce that\beno
	\begin{split}
		|J_4|\leq C\delta\|\na^{k+1}(n,v)\|_{L^2}^2,
	\end{split}
	\eeno
	which, together with \eqref{j1}, \eqref{j2} and \eqref{j3}, gives
	\beq\label{s2}
	\begin{split}
		\int \na^k\widetilde{S}_2\cdot \na^kv dx
		\leq C\delta\|\na^{k+1}(n,v)\|_{L^2}^2.
	\end{split}
	\eeq
	We finally utilize \eqref{s1} and \eqref{s2} in \eqref{ehk}, to obtain \eqref{en1} directly.
	Thus, we complete the proof of this lemma.
\end{proof}

We then derive the energy estimate for $N-th$ order spatial derivative of solution.
\begin{lemm}\label{enn}
	Under the assumptions in Theorem \ref{them3}, we have
	\beq\label{en2}
	\f{d}{dt}\|\na^{N}(n,v)\|_{L^2}^2+\|\na^{N+1}v\|_{L^2}^2\leq C\delta \|(\na^{N}n,\na^{N+1}v)\|_{L^2}^2,
	\eeq
	where $C$ is a positive constant independent of time.
\end{lemm}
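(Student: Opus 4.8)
The proof follows the same pattern as that of Lemma \ref{enn-1}, now carried out at the top order $k=N$, so I will only highlight the new points. First I would apply $\na^{N}$ to $\eqref{ns5}_1$ and $\eqref{ns5}_2$, multiply the resulting equations by $\na^{N}n$ and $\na^{N}v$ respectively, integrate over $\mathbb{R}^{3}$ and add them. The coupling contributions cancel after one integration by parts, and, since $\mu_1,\mu_2>0$, the viscous terms are nonnegative; one is then left with
\beno
\f12\f{d}{dt}\|\na^{N}(n,v)\|_{L^2}^{2}+\mu_1\|\na^{N+1}v\|_{L^2}^{2}+\mu_2\|\na^{N}\dive v\|_{L^2}^{2}\le\Big|\int\na^{N}\widetilde{S}_1\cdot\na^{N}n\,dx\Big|+\Big|\int\na^{N}\widetilde{S}_2\cdot\na^{N}v\,dx\Big|,
\eeno
so \eqref{en2} will follow once the two source integrals are bounded by $C\delta\,\big(\|\na^{N}n\|_{L^2}^{2}+\|\na^{N+1}v\|_{L^2}^{2}\big)$.

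The crucial difference from Lemma \ref{enn-1} is that the density has no dissipation at order $N+1$: there, because $k\le N-1$, one could freely shift an extra derivative onto $n$, producing $\na^{k+1}n$ which is still covered by the dissipation, whereas here any term in which $N+1$ derivatives fall on $n$ is inadmissible. For the $\widetilde{S}_1$-integral I would therefore \emph{not} integrate by parts in the divergence structure $\widetilde{S}_1=-\f{\mu_1\gamma}{\mu}\dive[(n+\bar\rho)v]$ (that would manufacture $\na^{N+1}n$); instead I would expand $\dive[(n+\bar\rho)v]=(n+\bar\rho)\dive v+v\cdot\na n+v\cdot\na\bar\rho$ and treat the three groups by the Leibniz rule. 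In $(n+\bar\rho)\dive v$ the only borderline piece is $(n+\bar\rho)\na^{N}\dive v$, which pairs with $\na^{N}n$ and is controlled by $C\delta\,\|\na^{N+1}v\|_{L^2}\|\na^{N}n\|_{L^2}$ using $\|n+\bar\rho\|_{L^\infty}\le C\delta$ from \eqref{energy-thm-01} and \eqref{density-control}; in the transport piece $v\cdot\na n$, the term $\int v\cdot\na^{N+1}n\,\na^{N}n\,dx$ is reduced by one further integration by parts to $-\f12\int\dive v\,|\na^{N}n|^{2}\,dx$, and the remaining commutator $[\na^{N},v\cdot\na]n$ is estimated by Lemma \ref{commutator} together with $\|\na v\|_{L^\infty},\|\na n\|_{L^\infty}\le C\delta$. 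The many terms carrying the background density $\bar\rho$ --- the piece $v\cdot\na\bar\rho$ here, and, in the $\widetilde{S}_2$-integral, the contributions of the product expansion \eqref{widef} of $\na^{l}\wf(n+\bar\rho)$, of the analogous expansion of $\na^{l}\wg(n+\bar\rho)$, and of the force term $\h(n,\bar\rho)\na\bar\rho$ treated via \eqref{hg-relation}--\eqref{wideh} --- are handled exactly as in Lemma \ref{enn-1}: attach the weight $(1+|x|)^{j}$ to each factor $\na^{j}\bar\rho$ (bounded by $\delta$ by \eqref{density-control}) and absorb the compensating negative weight on the remaining factor by iterating the Hardy inequality of Lemma \ref{hardy} in the upward form $\|(1+|x|)^{-m}\na^{p}v\|_{L^2}\le C\|\na^{p+m}v\|_{L^2}$, so that every such contribution ends up bounded by $C\delta\,\big(\|\na^{N}n\|_{L^2}^{2}+\|\na^{N+1}v\|_{L^2}^{2}\big)$. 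For the purely nonlinear pieces of $\widetilde{S}_2$ (namely $v\cdot\na v$ and the $\bar\rho$-free terms of $\na^{l}\wf,\na^{l}\wg$) I would repeat, with $k$ replaced by $N$, the Gagliardo-Nirenberg interpolations used in the estimates $J_1$--$J_4$ of Lemma \ref{enn-1}, which re-express every intermediate-order derivative, up to small powers of $\delta$, in terms of $\|\na^{N+1}v\|_{L^2}$ and $\|\na^{N}n\|_{L^2}$.

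The main obstacle is exactly this reorganization of $\int\na^{N}\widetilde{S}_1\cdot\na^{N}n\,dx$: because the divergence form of $\widetilde{S}_1$ tends to generate $\na^{N+1}n$, one must arrange that in each term where $N+1$ derivatives threaten to land on $n$ --- that is, in $(n+\bar\rho)\na^{N}\dive v$ and in $v\cdot\na^{N+1}n$ --- the extra derivative is either left on $v$ (giving the admissible $\na^{N+1}v$) or removed by an additional integration by parts against $|\na^{N}n|^{2}$, while all weighted $\bar\rho$-terms are routed, via Hardy's inequality, into the single velocity dissipation $\|\na^{N+1}v\|_{L^2}^{2}$. Once this is done, collecting all the estimates and using the smallness of $\delta$ to absorb the $\|\na^{N+1}v\|_{L^2}^{2}$ that appears on the right-hand side yields \eqref{en2}.
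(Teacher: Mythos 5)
Your proposal is correct and follows essentially the same route as the paper: the same energy identity, the same Leibniz expansion of $\widetilde{S}_1$ into the pieces $v\cdot\na n$, $n\dive v$, $v\cdot\na\bar\rho$, $\bar\rho\dive v$ with the top-order transport term reduced to $-\f12\int\dive v\,|\na^{N}n|^{2}dx$ by integration by parts, the same Hardy-weighted treatment of all $\bar\rho$-factors, and the same single integration by parts plus Gagliardo--Nirenberg interpolation for the $\widetilde{S}_2$-integral (the paper's terms $K_1$--$K_4$ and $L_1$--$L_4$). You also correctly identify the key point distinguishing this from Lemma \ref{enn-1}, namely that no derivative may be shifted onto $n$ at top order since $\na^{N+1}n$ is not covered by any dissipation.
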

\begin{proof}
Applying differential operator $\na^{N}$ to $\eqref{ns5}_1$ and $\eqref{ns5}_2$,
multiplying the resulting equations by $\na^{N}n$ and $\na^{N}v$, respectively,
and integrating over $\mathbb{R}^3$, it holds
\beq\label{ehk1}
\begin{split}
		\f12\f{d}{dt}\|\na^{N}(n,v)\|_{L^2}^2+\mu_1\|\na^{N+1} v\|_{L^2}^2+\mu_2\|\na^{N}\dive v\|_{L^2}^2
		= \int \na^{N}\widetilde{S}_1\cdot \na^{N}n dx+\int \na^{N}\widetilde{S}_2\cdot \na^{N}v dx.
\end{split}
\eeq
Now we estimate two terms on the right handside of \eqref{ehk1} separately.
In view of the definition of $\widetilde{S}_1$, we have
\beq\label{ks2}
\begin{split}
\int \na^{N}\widetilde{S}_1\cdot \na^{N}n dx
=
&C\int \na^{N}( v\cdot\na n )\cdot \na^{N}n dx
  +C\int \na^{N}( n\dive v)\cdot \na^{N}n dx\\
&  +C\int \na^{N}( v\cdot\na\bar\rho )\cdot \na^{N}n dx
  +C\int \na^{N}( \bar\rho \dive v)\cdot \na^{N}n dx\\
\overset{def}{=} &K_1+K_2+K_3+K_4.
\end{split}
\eeq
Sobolev inequality and integration by parts yield\beno
\begin{split}
K_1=&C\sum_{0\leq l\leq N}\int\na^{l+1}n\na^{N-l}v\na^{N}n dx\\
=&C\int\na n\na^{N}v\na^{N}ndx
  +C\sum_{1\leq l\leq N-2}\int\na^{l+1} n\na^{N-l}v\na^{N}n dx
  +C\int \na^{N}n\na v\na^{N}ndx-C\int \dive v|\na^{N}n|^2dx\\
\leq&C\|\na v\|_{L^\infty}\|\na^{N}n\|_{L^2}^2
     +C\|\na n\|_{L^3}\|\na^{N}v\|_{L^6}\|\na^{N}n\|_{L^2}
     +C\sum_{2\leq l\leq N-1}\|\na^ln\|_{L^6}\|\na^{N+1-l}v\|_{L^3}\|\na^{N}n\|_{L^2}\\
\leq&C \|\na^2 v\|_{H^1}\|\na^{N}n\|_{L^2}^2
      +C\|\na n\|_{H^1}\|\na^{N+1}v\|_{L^2}\|\na^{N}n\|_{L^2}
      +C\sum_{2\leq l\leq N-1}\|\na^ln\|_{L^6}\|\na^{N+1-l}v\|_{L^3}\|\na^{N}n\|_{L^2}.
\end{split}
\eeno
Using the Sobolev interpolation inequality \eqref{Sobolev}
in Lemma \ref{inter}, the third term on the right handside
of above inequality can be estimated as follows
\beq\label{lnv}
\begin{aligned}
&\|\na^ln\|_{L^6}\|\na^{N+1-l}v\|_{L^3}\|\na^{N}n\|_{L^2}\\
\leq &C\|n\|_{L^2}^{1-\f{l+1}{N}}\|\na^{N}n\|_{L^2}^{\f{l+1}{N}}
      \|\na^\al v\|_{L^2}^{\f{l+1}{N}}
      \|\na^{N+1}v\|_{L^2}^{1-\f{l+1}{N}}\|\na^{N}n\|_{L^2}\\
\leq &C\delta \|(\na^{N}n,\na^{N+1}v)\|_{L^2}^2,
\end{aligned}
\eeq
here we denote $\al$ that
$$\al=\f{3N}{2(l+1)}+1\leq \f{N}{2}+1,$$
since $l\geq 2$.
Thus, we have
\beno
|K_1| \leq C\delta \|(\na^{N}n,\na^{N+1}v)\|_{L^2}^2.
\eeno
It follows in a similar way to $K_1$ that
\beno
\begin{split}
K_2=   &C\sum_{0\leq l\leq N}\int\na^l n\na^{N+1-l}v\na^{N}n dx\\
  \leq &C\Big(\|n\|_{L^\infty}\|\na^{N+1} v\|_{L^2}+\|\na n\|_{L^3}\|\na^{N} v\|_{L^6}
          +\sum_{2\leq l\leq N-1}\|\na^ln\|_{L^6}\|\na^{N+1-l}v\|_{L^3}
          +\|\na v\|_{L^\infty}\|\na^{N}n\|_{L^2}\Big)\|\na^{N}n\|_{L^2}\\
  \leq&C\delta \|(\na^{N}n,\na^{N+1}v)\|_{L^2}^2,
\end{split}
\eeno
where we have used \eqref{lnv} in the last inequality above.
Thanks to Hardy inequality, we compute
\beno
\begin{split}
|K_3| \leq& C\sum_{0\leq l\leq N}\|(1+|x|)^{l+1}\na^{l+1}\bar\rho\|_{L^{\infty}}
             \|\f{\na^{N-l}v}{(1+|x|)^{l+1}}\|_{L^2}\|\na^{N}n\|_{L^2}
		\leq C\delta \|(\na^{N}n,\na^{N+1}v)\|_{L^2}^2,\\
|K_4| \leq& C\sum_{0\leq l\leq N}\|(1+|x|)^{l}\na^{l}\bar\rho\|_{L^{\infty}}
            \|\f{\na^{N+1-l}v}{(1+|x|)^{l}}\|_{L^2}\|\na^{N}n\|_{L^2}
		\leq C\delta \|(\na^{N}n,\na^{N+1}v)\|_{L^2}^2.
\end{split}
\eeno
Substituting the estimates of term from $K_1$ to $K_4$ into \eqref{ks2}, we have
\beq\label{ws11}
\begin{split}
\left|\int \na^{N}\widetilde{S}_1\cdot \na^{N}n dx \right|
\leq&C \delta \|(\na^{N}n,\na^{N+1}v)\|_{L^2}^2.
\end{split}
\eeq
Remebering the definition of $\widetilde{S}_2$ and integrating by part, we find
\beq\label{ks22}
\begin{split}
&\int \na^{N}\widetilde{S}_2\cdot \na^{N}v dx\\
=&\int \na^{N-1}\Big(\f{\mu_1\gamma}{\mu}v\cdot \na v \Big)\cdot \na^{N+1}v dx
+\int \na^{N-1}\Big\{\wf(n+\bar\rho)\big(\mu_1\tri v+\mu_2\na\dive v\big) \Big\}\cdot \na^{N+1}v dx\\
&+\int \na^{N-1}\Big(\wg(n+\bar\rho) \na n \Big)\cdot \na^{N+1}v dx
+\int \na^{N-1}\Big(\h(n,\bar\rho)\na\bar\rho\Big)\cdot \na^{N+1}v dx\\
\overset{def}{=}&L_1+L_2+L_3+L_4.
\end{split}
\eeq
According to the Sobolev interpolation inequality \eqref{Sobolev}
in Lemma \ref{inter}, we deduce that
\beq\label{l1}
\begin{split}
|L_1|
\leq  &C\sum_{0\leq l\leq N-1}\|\na^{N-1-l}v\|_{L^3}\|\na^{l+1}v\|_{L^6}\|\na^{N+1}v\|_{L^2}\\
\leq  &C\sum_{0\leq l\leq N-1}\|\na^{\al}v\|_{L^2}^{\f{l+2}{N+1}}
      \|\na^{N+1}v\|_{L^2}^{1-\f{l+2}{N+1}}
      \|v\|_{L^2}^{1-\f{l+2}{N+1}}\|\na^{N+1}v\|_{L^2}^{\f{l+2}{N+1}}
      \|\na^{N+1}v\|_{L^2}\\
\leq &C\delta \|\na^{N+1}v\|_{L^2}^2,
\end{split}
\eeq
where $\al$ is given by
$$
\al=\f{N+1}{2(l+2)}\leq \f{N+1}{4}.
$$
Next, we estimate the term $L_2$ and
divide it into two terms as follows
\beq\label{L2}
|L_2|
\leq C\int|\wf(n+\bar\rho)||\na^{N+1}v|^2dx
+C\sum_{1\leq l\leq N-1}\int|\na^{l}\wf(n+\bar\rho)||\na^{N+1-l}v||\na^{N+1}v|dx
\overset{def}{=}L_{21}+L_{22}.
\eeq
	By Sobolev inequality, one can deduce directly that\beq\label{l21}
	\begin{split}
		L_{21}\leq C\|(n+\bar\rho)\|_{L^\infty}\|\na^{N+1}v\|_{L^2}^2\leq C\delta\|\na^{N+1}v\|_{L^2}^2.
	\end{split}
	\eeq
It follows from \eqref{widef} that $L_{22}$ can be
split up into three terms as follows:
\beq\label{l22}
\begin{split}
L_{22} \leq
&C\sum_{1\leq l\leq N-1}\sum_{\gamma_1+\cdots+\gamma_i=l}
 \int|\na^{\gamma_1}n|\cdots|\na^{\gamma_i}n||\na^{N+1-l}v||\na^{N+1}v|dx\\
&+C\sum_{1\leq l\leq N-1}\sum_{\gamma_1+\cdots+\gamma_j=l}
 \int|\na^{\gamma_1}\bar\rho|\cdots|\na^{\gamma_j}\bar\rho||\na^{N+1-l}v||\na^{N+1}v|dx\\
&+C\sum_{1\leq l\leq N-1}
\sum_{\substack{\gamma_1+\cdots+\gamma_{i+j}=l\\\gamma_1+\cdots+\gamma_{i}=m\\1\leq m\leq l-1}}
\int|\na^{\gamma_1}n|\cdots|\na^{\gamma_i}n|
    |\na^{\gamma_{i+1}}\bar\rho|\cdots|\na^{\gamma_{i+j}}\bar\rho|
    |\na^{N+1-l}v||\na^{N+1}v|dx\\
\overset{def}{=} & L_{221}+L_{222}+L_{223}.
\end{split}
\eeq
We then divide $L_{221}$ into two terms as follows:
\beno
\begin{aligned}
L_{221}
\leq & C\sum_{1\leq l\leq N-1}
            \int|\na^{l}n||\na^{N+1-l}v||\na^{N+1}v|dx\\
     &+C\sum_{1\leq l\leq N-1}\sum_{\substack{\gamma_1+\cdots+\gamma_i=l\\i\geq2}}
       \int|\na^{\gamma_1}n|\cdots|\na^{\gamma_i}n||\na^{N+1-l}v||\na^{N+1}v|dx\\
\overset{def}{=}&L_{2211}+L_{2212}.
\end{aligned}
\eeno
Using the Sobolev inequality and estimate \eqref{lnv}, it holds
\beno
\begin{split}
L_{2211}
\leq& C\Big(\|\na n\|_{L^3}\|\na^{N}v\|_{L^6}
       +\sum_{2\leq l\leq N-1}\|\na^{l}n\|_{L^6}\|\na^{N+1-l}v\|_{L^3}\Big)
       |\na^{N+1}v\|_{L^2}\\
\leq& C\Big(\|\na n\|_{H^1}\|\na^{N+1}v\|_{L^2}
       +\sum_{2\leq l\leq N-1}\|n\|_{L^2}^{1-\f{l+1}{N}}\|\na^{N}n\|_{L^2}^{\f{l+1}{N}}
       \|\na^\al v\|_{L^2}^{\f{l+1}{N}}\|\na^{N+1}v\|_{L^2}^{1-\f{l+1}{N}}\Big)
       \|\na^{N+1}v\|_{L^2}\\
\leq& C\delta \|(\na^{N}n,\na^{N+1}v)\|_{L^2}^2.
\end{split}
\eeno
Since $i\geq2$ in term $L_{2212}$, then $\gamma_\be\leq N-2$ for $\be=1,2,\cdots,i$.
Thus, we can use the Sobolev interpolation inequality \eqref{Sobolev} in
Lemma \ref{inter} to find
\beno
	\begin{split}
		L_{2212}\leq&C\sum_{1\leq l\leq N-1}\sum_{\substack{\gamma_1+\cdots+\gamma_i=l\\i\geq2}}\|\na^{\gamma_1}n\|_{L^\infty}\cdots\|\na^{\gamma_i}n\|_{L^\infty}\|\na^{N+1-l}v\|_{L^2}\|\na^{N+1}v\|_{L^2}\\
		\leq&C\delta\sum_{1\leq l\leq N-1}\sum_{\substack{\gamma_1+\cdots+\gamma_i=l\\i\geq2}}\|n\|_{L^2}^{1-\f{3+2\gamma_1}{2N}}\|\na^{N}n\|_{L^2}^{\f{3+2\gamma_1}{2N}}\cdots\|n\|_{L^2}^{1-\f{3+2\gamma_{i}}{2N}}\|\na^{N}n\|_{L^2}^{\f{3+2\gamma_i}{2N}}\|\na^{N+1-l}v\|_{L^2}\|\na^{N+1}v\|_{L^2}\\
		\leq&C\sum_{1\leq l\leq N-1}\sum_{\substack{\gamma_1+\cdots+\gamma_i=l\\i\geq2}}\|n\|_{L^2}^{1-\f{3i+2l}{2N}}\|\na^{N}n\|_{L^2}^{\f{3i+2l}{2N}}\|\na^{\al}v\|_{L^2}^{\f{3i+2l}{2N}}\|\na^{N+1}v\|_{L^2}^{1-\f{3i+2l}{2N}}\|\na^{N+1}v\|_{L^2}\\
		\leq&C\delta\|(\na^{N}n,\na^{N+1}v)\|_{L^2}^{2},
	\end{split}
	\eeno
	where $\al$ is given by
	$$\al=1+\f{3iN}{3i+2l}\leq 1+\f35N\leq N,$$
	provided $i\geq 1$, $m\geq 1$, $i\leq m$ and $N\geq3$. Therefore, it follows from the estimates above that
\beno
L_{221}\leq C\delta\|(\na^{N}n,\na^{N+1}v)\|_{L^2}^{2}.
\eeno
The application of Hardy inequality yields directly
\beno
\begin{split}
L_{222}
\leq &C \sum_{1\leq l\leq N-1}\sum_{\gamma_1+\cdots+\gamma_j=l}
        \|(1+|x|)^{\gamma_1}\na^{\gamma_1}\bar \rho\|_{L^{\infty}}\cdots\|(1+|x|)^{\gamma_j}\na^{\gamma_j}\bar \rho\|_{L^{\infty}}
        \|\f{\na^{N+1-l}v}{(1+|x|)^{l}}\|_{L^2}\|\na^{N+1}v\|_{L^2}\\
\leq &  C\delta \|\na^{N+1}v\|_{L^2}^2.
\end{split}
\eeno
Using Sobolev interpolation inequality \eqref{Sobolev} in
Lemma \ref{inter} again, we have
\beno
\begin{split}
L_{223}
\leq& C\sum_{1\leq l\leq N-1}\sum_{\substack{\gamma_1+\cdots+\gamma_{i+j}=l\\\gamma_1+\cdots+\gamma_{i}=m\\1\leq m\leq l-1}}\|\na^{\gamma_1}n\|_{L^\infty}\cdots\|\na^{\gamma_i}n\|_{L^\infty}\|(1+|x|)^{\gamma_{i+1}}\na^{\gamma_{i+1}}\bar\rho\|_{L^\infty}\cdots\|(1+|x|)^{\gamma_{i+j}}\na^{\gamma_{i+j}}\bar\rho\|_{L^\infty}\\
		&\times\|\f{\na^{N+1-l}v}{(1+|x|)^{l-m}}\|_{L^2}\|\na^{N+1}v\|_{L^2}\\
		\leq&C\delta\sum_{1\leq l\leq N-1}\sum_{\substack{\gamma_1+\cdots+\gamma_{i+j}=l\\\gamma_1+\cdots+\gamma_{i}=m\\1\leq m\leq l-1}}\|n\|_{L^2}^{1-\f{3+2\gamma_1}{2N}}\|\na^{N}n\|_{L^2}^{\f{3+2\gamma_1}{2N}}\cdots\|n\|_{L^2}^{1-\f{3+2\gamma_{i}}{2N}}\|\na^{N}n\|_{L^2}^{\f{3+2\gamma_i}{2N}}\|\na^{N+1-m}v\|_{L^2}\|\na^{N+1}v\|_{L^2}\\
		\leq&C\delta\sum_{1\leq l\leq N-1}\sum_{\substack{\gamma_1+\cdots+\gamma_{i+j}=l\\\gamma_1+\cdots+\gamma_{i}=m\\1\leq m\leq l-1}}\|n\|_{L^2}^{1-\f{3i+2m}{2N}}\|\na^{N}n\|_{L^2}^{\f{3i+2m}{2N}}\|\na^{\al}v\|_{L^2}^{\f{3i+2m}{2N}}\|\na^{N+1}v\|_{L^2}^{1-\f{3i+2m}{2N}}\|\na^{N+1}v\|_{L^2}\\
		\leq&C\delta\|(\na^{N}n,\na^{N+1}v)\|_{L^2}^{2},
	\end{split}
	\eeno
where $\al$ is given by
$$\al=1+\f{3iN}{3i+2m}\leq 1+\f35N\leq N,$$
provided $i\geq 1$, $m\geq 1$ and $i\leq m$.
Then, substituting estimates of terms $L_{221}, L_{222}$ and $L_{223}$
into \eqref{l22}, we have
\beq\label{L22}
\begin{split}
L_{22}\leq C\delta \|(\na^{N}n,\na^{N+1}v)\|_{L^2}^{2}.
\end{split}
\eeq
The combination of estimate \eqref{L2}, \eqref{l21} and \eqref{L22} yields directly
\beq\label{l2}
|L_2|\leq C\delta \|(\na^{N}n,\na^{N+1}v)\|_{L^2}^{2}.
\eeq
Now we give the estimate for the term $L_3$. Indeed, it is easy to check that
\beq\label{L3}
\begin{split}
|L_3|
   &\leq C\int|\wg(n+\bar\rho)||\na^{N}n||\na^{N+1}v|dx
     +C\sum_{1\leq l\leq N-1}\sum_{\gamma_1+\cdots+\gamma_i=l}
     \int|\na^{\gamma_1}n|\cdots|\na^{\gamma_i}n||\na^{N-l}n||\na^{k+2}v|dx\\
	&\quad+C\sum_{1\leq l\leq N-1}\sum_{\gamma_1+\cdots+\gamma_j=l}
       \int|\na^{\gamma_1}\bar\rho||\na^{N-l}n|\cdots
       |\na^{\gamma_j}\bar\rho||\na^{N-l}n||\na^{N+1}v|dx\\
	&\quad+C\sum_{1\leq l\leq N-1}\sum_{\substack{\gamma_1+\cdots+\gamma_{i+j}=l\\
            \gamma_1+\cdots+\gamma_{i}=m\\1\leq m\leq l-1}}
           \int|\na^{\gamma_1}n|\cdots|\na^{\gamma_i}n|
               |\na^{\gamma_{i+1}}\bar\rho|\cdots|\na^{\gamma_{i+j}}\bar\rho|
               |\na^{N-l}n||\na^{N+1}v|dx\\
	&\overset{def}{=}L_{31}+L_{32}+L_{33}+L_{34}.
\end{split}
\eeq
By Sobolev inequality, it is easy to check that
\beq\label{L31}
\begin{split}
L_{31}
\leq C\|(n+\bar\rho)\|_{L^\infty}\|\na^{N}n\|_{L^2}\|\na^{N+1}v\|_{L^2}
\leq C\delta\|(\na^{N}n,\na^{N+1}v)\|_{L^2}^2.
\end{split}
\eeq
For $L_{32}$, we divide it into the following two terms: \beno
\begin{split}
L_{32}\leq& C\sum_{1\leq l\leq N-1}\int|\na^{l}n||\na^{N-l}n||\na^{N+1}v|dx\\
   &+C\sum_{1\leq l\leq N-1}\sum_{\substack{\gamma_1+\cdots+\gamma_i=l\\i\geq2}}
     \int|\na^{\gamma_1}n|\cdots|\na^{\gamma_i}n||\na^{N-l}n||\na^{N+1}v|dx\\
	\overset{def}{=}&L_{321}+L_{322}.
	\end{split}
	\eeno
By Sobolev inequality, it holds
\beno
\begin{split}
L_{321}
\leq& C\Big(\|\na n\|_{L^3}\|\na^{N-1}n\|_{L^6}
           +\sum_{2\leq l\leq N-1}\|\na^{l}n\|_{L^6}\|\na^{N-l}n\|_{L^3}\Big)
          \|\na^{N+1}v\|_{L^2}\\
\leq& C\Big(\|\na n\|_{L^3}\|\na^{N-1}n\|_{L^6}
           +\sum_{2\leq l\leq N-1}
           \|n\|_{L^2}^{1-\f{l+1}{N}}
           \|\na^{N}n\|_{L^2}^{\f{l+1}{N}}
           \|\na^{\al}n\|_{L^2}^{\f{l+1}{N}}
           \|\na^{N}n\|_{L^2}^{1-\f{l+1}{N}}\Big)
          \|\na^{N+1}v\|_{L^2}\\
\leq& C\delta \|(\na^{N}n,\na^{N+1}v)\|_{L^2}^2,
\end{split}
\eeno
where $\al$ is defined by $\al=\f{3N}{2(l+1)}\leq \f{N}{2}$.
For the term $L_{322}$, the fact $i\geq2$ implies
$\gamma_\be+2\leq l+1\leq N$ for $\be=1,2,\cdots,i$.
Then, it follows from Sobolev interpolation inequality \eqref{Sobolev} in Lemma \ref{inter} that
\beno
	\begin{split}
		L_{322}\leq&C\sum_{1\leq l\leq N-1}\sum_{\substack{\gamma_1+\cdots+\gamma_{i}=l\\i\geq2}}\|\na^{\gamma_1}n\|_{L^\infty}\cdots\|\na^{\gamma_i}n\|_{L^\infty}\|\f{\na^{N-l}n}{(1+|x|)^{l-m}}\|_{L^2}\|\na^{N+1}v\|_{L^2}\\
		\leq&C\sum_{1\leq l\leq N-1}\sum_{\substack{\gamma_1+\cdots+\gamma_{i}=l\\i\geq2}}\|n\|_{L^2}^{1-\f{3+2\gamma_1}{2N}}\|\na^{N}n\|_{L^2}^{\f{3+2\gamma_1}{2N}}\cdots\|n\|_{L^2}^{1-\f{3+2\gamma_{i}}{2N}}\|\na^{N}n\|_{L^2}^{\f{3+2\gamma_i}{2N}}\|\na^{N-m}n\|_{L^2}\|\na^{N+1}v\|_{L^2}\\
		\leq&C\delta\sum_{1\leq l\leq N-1}\sum_{\substack{\gamma_1+\cdots+\gamma_{i}=l\\i\geq2}}\|n\|_{L^2}^{1-\theta}\|\na^{N}n\|_{L^2}^{\theta}\|\na^{\al}n\|_{L^2}^{\theta}\|\na^{N}n\|_{L^2}^{1-\theta}\|\na^{N+1}v\|_{L^2}\\
		\leq&C\delta\|(\na^{N}n,\na^{N+1}v)\|_{L^2}^{2},
	\end{split}
	\eeno
where we denote
$$\theta=\f{3i+2l}{2N},\quad\al=\f{3iN}{3i+2l}\leq \f35N\leq N,$$
provided $l\geq1$, $i\leq l$ and $i\geq2$.
Then, the combination of estimates of terms $L_{321}$ and $L_{322}$ implies directly
\beq\label{L32}
	L_{32}\leq C\delta\|(\na^{N}n,\na^{N+1}v)\|_{L^2}^{2}.
\eeq
    For the term $L_{33}$, by Hardy inequality, we obtain\beq\label{L33}
	\begin{split}
		L_{33}
		\leq&C \sum_{1\leq l\leq N-1}\sum_{\gamma_1+\cdots+\gamma_j=l}\|(1+|x|)^{\gamma_1}\na^{\gamma_1}\bar \rho\|_{L^{\infty}}\cdots\|(1+|x|)^{\gamma_j}\na^{\gamma_j}\bar \rho\|_{L^{\infty}}\|\f{\na^{N-l}n}{(1+|x|)^{l}}\|_{L^2}\|\na^{N+1}v\|_{L^2}\\
		\leq& C\delta \|(\na^{N}n,\na^{N+1}v)\|_{L^2}^2.
	\end{split}
	\eeq
	In view of Sobolev interpolation inequality \eqref{Sobolev} in
	Lemma \ref{inter} and Hardy inequality, one deduces that\beq\label{L34}
	\begin{split}
		L_{34}\leq& C\sum_{1\leq l\leq N-1}\sum_{\substack{\gamma_1+\cdots+\gamma_{i+j}=l\\\gamma_1+\cdots+\gamma_{i}=m\\1\leq m\leq l-1}}\|\na^{\gamma_1}n\|_{L^\infty}\cdots\|\na^{\gamma_i}n\|_{L^\infty}\|(1+|x|)^{\gamma_{i+1}}\na^{\gamma_{i+1}}\bar\rho\|_{L^\infty}\cdots\|(1+|x|)^{\gamma_{i+j}}\na^{\gamma_{i+j}}\bar\rho\|_{L^\infty}\\
		&\times\|\f{\na^{N-l}n}{(1+|x|)^{l-m}}\|_{L^2}\|\na^{N+1}v\|_{L^2}\\
		\leq&C\delta\sum_{1\leq l\leq N-1}\sum_{\substack{\gamma_1+\cdots+\gamma_{i+j}=l\\\gamma_1+\cdots+\gamma_{i}=m\\1\leq m\leq l-1}}\|n\|_{L^2}^{1-\f{3+2\gamma_1}{2N}}\|\na^{N}n\|_{L^2}^{\f{3+2\gamma_1}{2N}}\cdots\|n\|_{L^2}^{1-\f{3+2\gamma_{i}}{2N}}\|\na^{N}n\|_{L^2}^{\f{3+2\gamma_i}{2N}}\|\na^{N-m}n\|_{L^2}\|\na^{N+1}v\|_{L^2}\\
		\leq&C\delta\sum_{1\leq l\leq N-1}\sum_{\substack{\gamma_1+\cdots+\gamma_{i+j}=l\\\gamma_1+\cdots+\gamma_{i}=m\\1\leq m\leq l-1}}\|n\|_{L^2}^{1-\theta}\|\na^{N}n\|_{L^2}^{\theta}\|\na^{\al}n\|_{L^2}^{\theta}\|\na^{N}n\|_{L^2}^{1-\theta}\|\na^{N+1}v\|_{L^2}\\
		\leq&C\delta\|(\na^{N}n,\na^{N+1}v)\|_{L^2}^{2},
	\end{split}
	\eeq
	where
	$$\theta=\f{3i+2m}{2N},\quad\al=\f{3iN}{3i+2m}\leq \f35N\leq N,$$
	provided $i\geq 1$, $m\geq 1$ and $i\leq m$. We substitute \eqref{L31}-\eqref{L34} into \eqref{L3}, to find that
	\beq\label{l3}
	\begin{split}
		|L_3|\leq C\delta \|(\na^{N}n,\na^{N+1}v)\|_{L^2}^{2}.
	\end{split}
	\eeq
	For the last term $L_4$, with the aid of the estimates \eqref{h} and \eqref{h2} of $\h$, it is easy to deduce\beq\label{LL4}
	\begin{split}
		|L_4|\leq&C\int|\h(n,\bar\rho)||\na^{N}\bar\rho||\na^{N+1}v|dx+C\sum_{1\leq l\leq N-1}\int|\na^{l}\h(n,\bar\rho)||\na^{N-l}\bar\rho||\na^{N+1}v|dx\\
		\leq &C\int |n||\na^{N}\bar\rho||\na^{N+1}v|dx+C\sum_{1\leq l\leq N-1}\sum_{\gamma_1+\cdots+\gamma_{j}=l}\int|n||\na^{\gamma_1}\bar\rho|\cdots|\na^{\gamma_j}\bar\rho||\na^{N-l}\bar\rho||\na^{N}v|dx\\
		&\quad+C\sum_{0\leq l\leq N-1}\sum_{\gamma_1+\cdots+\gamma_i=l}\int|\na^{\gamma_1}n|\cdots|\na^{\gamma_i}n||\na^{N-l}\bar\rho||\na^{N+1}v|dx\\
		&\quad+C\sum_{1\leq l\leq N-1}\sum_{\substack{\gamma_1+\cdots+\gamma_{i+j}=l\\\gamma_1+\cdots+\gamma_{i}=m\\1\leq m\leq l-1}}\int|\na^{\gamma_1}n|\cdots|\na^{\gamma_i}n||\na^{\gamma_{i+1}}\bar\rho|\cdots|\na^{\gamma_{i+j}}\bar\rho||\na^{N-l}\bar\rho||\na^{N+1}v|dx\\
		\overset{def}{=}&L_{41}+L_{42}+L_{43}+L_{44}.
	\end{split}
	\eeq
	According to Hardy inequality, we obtain immediately
	\beno
	\begin{split}
		L_{41}\leq& \sum_{0\leq l\leq N-1} \|\f{n}{(1+|x|)^{N}}\|_{L^2}\|(1+|x|)^{N}\na^{N}\bar\rho\|_{L^\infty}\|\na^{N+1}v\|_{L^2}\leq C\delta \|(\na^{N}n,\na^{N+1}v)\|_{L^2}^2,\\
		L_{42}\leq& \sum_{0\leq l\leq N-1}\sum_{\gamma_1+\cdots+\gamma_j=l} \|\f{n}{(1+|x|)^{N}}\|_{L^2}\|(1+|x|)^{\gamma_1}\na^{\gamma_1}\bar\rho\|_{L^\infty}\cdots\|(1+|x|)^{\gamma_j}\na^{\gamma_j}\bar\rho\|_{L^\infty}\\
		&\quad\times\|(1+|x|)^{N-l}\na^{k+1}\bar\rho\|_{L^\infty}\|\na^{N+1}v\|_{L^2}\\
		\leq& C\delta \|(\na^{N}n,\na^{N+1}v)\|_{L^2}^2.
	\end{split}
	\eeno
	Next, let us to deal with $L_{43}$. It is easy to deduce that\beno
    \begin{split}
    	L_{43}\leq& C\sum_{0\leq l\leq N-1}\int|\na^{l}n||\na^{N-l}\bar\rho||\na^{N+1}v|dx+C\sum_{0\leq l\leq k}\sum_{\substack{\gamma_1+\cdots+\gamma_i=l\\i\geq2}}\int|\na^{\gamma_1}n|\cdots|\na^{\gamma_i}n||\na^{N-l}\bar\rho||\na^{N+1}v|dx\\
    	\overset{def}{=}&L_{431}+L_{432}.
    \end{split}
    \eeno
    We employ Hardy inequality once again, to get
    \beno
    \begin{split}
    	L_{431}\leq \sum_{0\leq l\leq k} \|\f{\na^ln}{(1+|x|)^{N-l}}\|_{L^2}\|(1+|x|)^{N-l}\na^{N-l}\bar\rho\|_{L^\infty}\|\na^{N+1}v\|_{L^2}\leq C\delta \|(\na^{N}n,\na^{N+1}v)\|_{L^2}^2.
    \end{split}
    \eeno
	The fact $i\geq2$ implies that $\gamma_\be+2\leq l+1\leq N$, for $\be=1,\cdots,i$. Applying Sobolev interpolation inequality \eqref{Sobolev} in Lemma \ref{inter} and Hardy inequality, we obtain\beno
	\begin{split}
		L_{432}\leq&C\sum_{1\leq l\leq N-1}\sum_{\substack{\gamma_1+\cdots+\gamma_{i}=l\\i\geq2}}\|\f{\na^{\gamma_1}n}{(1+|x|)^{N-l}}\|_{L^2}\|\na^{\gamma_2}n\|_{L^\infty}\cdots\|\na^{\gamma_i}n\|_{L^\infty}\|(1+|x|)^{N-l}\na\bar\rho\|_{L^2}\|\na^{N+1}v\|_{L^2}\\
		\leq&C\delta\sum_{1\leq l\leq N-1}\sum_{\substack{\gamma_1+\cdots+\gamma_{i}=l\\i\geq2}}\|\na^{N-l+\gamma_1}n\|_{L^2}\|n\|_{L^2}^{1-\f{3+2\gamma_2}{2N}}\|\na^{N}n\|_{L^2}^{\f{3+2\gamma_2}{2N}}\cdots\|_{L^2}\|n\|_{L^2}^{1-\f{3+2\gamma_i}{2N}}\|\na^{N}n\|_{L^2}^{\f{3+2\gamma_i}{2N}}\|\na^{N+1}v\|_{L^2}\\
		\leq&C\delta\sum_{1\leq l\leq N-1}\sum_{\substack{\gamma_1+\cdots+\gamma_{i+j}=l\\i\geq2}}\|\na^{\al}n\|_{L^2}^{\theta}\|\na^{N}n\|_{L^2}^{1-\theta}\|n\|_{L^2}^{1-\theta}\|\na^{N}n\|_{L^2}^{\theta}\|\na^{N+1}v\|_{L^2}\\
		\leq&C\delta\|(\na^{N}n,\na^{N+1}v)\|_{L^2}^2,
	\end{split}
	\eeno
	where
	$$\theta=\f{3(i-1)+2(l-\gamma_1)}{2N},\quad \al=\f{3(i-1)N}{3(i-1)+2(l-\gamma_1)}\leq \f{3}{5}N\leq N,$$
	provided that $i\geq2$ and $i-1\leq l-\gamma_1$.
	Two estimates of terms from $L_{431}$ and $L_{432}$ gives\beno
	L_{43}\leq C\delta\|(\na^{N}n,\na^{N+1}v)\|_{L^2}^2.
	\eeno
	The last term on the right handside of \eqref{LL4} can be divided into two terms:
	\beno
	\begin{split}
	L_{44}\leq&C\sum_{1\leq l\leq N-1}\sum_{\substack{\gamma_1+\cdots+\gamma_{1+j}=l\\\gamma_1=m\\1\leq m\leq l-1}}\|\f{\na^{\gamma_1}n}{(1+|x|)^{N-m}}\|_{L^2}\|(1+|x|)^{\gamma_{2}}\na^{\gamma_{2}}\bar\rho\|_{L^\infty}\cdots\|(1+|x|)^{\gamma_{1+j}}\na^{\gamma_{1+j}}\bar\rho\|_{L^\infty}\\
	&\times\|(1+|x|)^{N-l}\na^{N-l}\bar\rho\|_{L^\infty}\|\na^{N+1}v\|_{L^2}+C\sum_{1\leq l\leq N-1}\sum_{\substack{\gamma_1+\cdots+\gamma_{i+j}=l\\\gamma_1+\cdots+\gamma_{i}=m\\1\leq m\leq l-1,~i\geq2}}\|\f{\na^{\gamma_1}n}{(1+|x|)^{N-m}}\|_{L^2}\|\na^{\gamma_2}n\|_{L^\infty}\cdots\|\na^{\gamma_i}n\|_{L^\infty}\\
	&\times\|(1+|x|)^{\gamma_{i+1}}\na^{\gamma_{i+1}}\bar\rho\|_{L^\infty}\cdots\|(1+|x|)^{\gamma_{i+j}}\na^{\gamma_{i+j}}\bar\rho\|_{L^\infty}\|(1+|x|)^{N-l}\na^{N-l}\bar\rho\|_{L^\infty}\|\na^{N+1}v\|_{L^2}\\
	\overset{def}{=}&L_{441}+L_{442}.
	\end{split}
    \eeno
    We use Hardy inequaly, to find\beno
    \begin{split}
    	L_{441}\leq C\delta\sum_{1\leq l\leq N-1}\sum_{\substack{\gamma_1+\cdots+\gamma_{1+j}=l\\1\leq m\leq l-1}}\|\na^{N}n\|_{L^2}\|\na^{N+1}v\|_{L^2}\leq C\delta\|(\na^{N}n,\na^{N+1}v)\|_{L^2}^2,
    \end{split}
    \eeno
    To deal with term $L_{442}$, by virtue of Sobolev interpolation inequality \eqref{Sobolev} in Lemma \ref{inter} and Hardy inequality, we arrive at
	\beno
	\begin{split}
	L_{442}\leq&C\delta\sum_{1\leq l\leq N-1}\sum_{\substack{\gamma_1+\cdots+\gamma_{i+j}=l\\\gamma_1+\cdots+\gamma_{i}=m\\1\leq m\leq l-1,~i\geq2}}\|\na^{N-m+\gamma_1}n\|_{L^2}\|n\|_{L^2}^{1-\f{3+2\gamma_2}{2N}}\|\na^{N}n\|_{L^2}^{\f{3+2\gamma_2}{2N}}\cdots\|n\|_{L^2}^{1-\f{3+2\gamma_i}{2N}}\|\na^{N}n\|_{L^2}^{\f{3+2\gamma_i}{2N}}\|\na^{N+1}v\|_{L^2}\\
	\leq&C\delta\sum_{1\leq l\leq N-1}\sum_{\substack{\gamma_1+\cdots+\gamma_{i+j}=l\\\gamma_1+\cdots+\gamma_{i}=m\\1\leq m\leq l-1,~i\geq2}}\|\na^{\al}n\|_{L^2}^{\theta}\|\na^{N}n\|_{L^2}^{1-\theta}\|n\|_{L^2}^{1-\theta}\|\na^{N}n\|_{L^2}^{\theta}\|\na^{N+1}v\|_{L^2}\\
	\leq&C\delta\|(\na^{N}n,\na^{N+1}v)\|_{L^2}^2,
	\end{split}
	\eeno
	where
	$$\theta=\f{3(i-1)+2(m-\gamma_1)}{2N},\quad\al=\f{3(i-1)N}{3(i-1)+2(m-\gamma_1)}\leq \f{3}{5}N\leq N,$$
	provided that $i\geq2$ and $i-1\leq m-\gamma_1$.
	Hence, the combination of estimates of terms $L_{441}$ and $L_{442}$ implies directly
	\beno
	L_{44}\leq C\delta\|(\na^{N}n,\na^{N+1}v)\|_{L^2}^2.
	\eeno
	We substitute estimates of terms from $L_{41}$ to $L_{44}$ into \eqref{LL4} to find\beq\label{l4}
	\begin{split}
		|L_4|\leq C\delta\|(\na^{N}n,\na^{N+1}v)\|_{L^2}^2.
	\end{split}
	\eeq
	Inserting \eqref{l1}, \eqref{l2}, \eqref{l3} and \eqref{l4} into \eqref{ks22}, we thereby deduce that\beq\label{ws22}
	\begin{split}
		\Big|\int \na^{N}\widetilde{S}_2\cdot \na^{N}v dx\Big|\leq C\delta\|(\na^{N}n,\na^{N+1}v)\|_{L^2}^2.
	\end{split}
	\eeq
	Plugging \eqref{ws11} and \eqref{ws22} into \eqref{ehk1} gives \eqref{en2} directly. Therefore, the proof of this lemma is completed.
\end{proof}
Finally, we aim to recover the dissipation estimate for $n$.
\begin{lemm}\label{ennjc}
	Under the assumptions in Theorem \ref{them3}, for $1\leq k\leq N-1$, we have
	\beq\label{en3}
	\f{d}{dt}\int \na^k v\cdot\na^{k+1}ndx+\|\na^{k+1}n\|_{L^2}^2\leq C_2\|(\na^{k+1}v,\na^{k+2}v)\|_{L^2}^2,
	\eeq
	where $C_2$ is a positive constant independent of $t$.
\end{lemm}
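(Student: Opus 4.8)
The plan is to test the $\na^k$-differentiated momentum equation $\eqref{ns5}_2$ against $\na^{k+1}n$, which produces the missing dissipation $\gamma\|\na^{k+1}n\|_{L^2}^2$ out of the pressure term $\gamma\na n$, and then to recognise the resulting time term $\int\na^kv_t\cdot\na^{k+1}n\,dx$ as a total time derivative up to a remainder governed by the continuity equation. Concretely, I would apply $\na^k$ to $\eqref{ns5}_2$, pair with $\na^{k+1}n$, write $\int\na^kv_t\cdot\na^{k+1}n=\f{d}{dt}\int\na^kv\cdot\na^{k+1}n-\int\na^kv\cdot\na^{k+1}n_t$, substitute $n_t=-\gamma\dive v+\widetilde S_1$ from $\eqref{ns5}_1$, and integrate by parts, so as to reach the identity
\begin{equation*}
\begin{aligned}
\f{d}{dt}\int\na^kv\cdot\na^{k+1}n\,dx+\gamma\|\na^{k+1}n\|_{L^2}^2
={}&\int\na^k(\mu_1\tri v+\mu_2\na\dive v)\cdot\na^{k+1}n\,dx-\gamma\int\na^kv\cdot\na^{k+1}\dive v\,dx\\
&+\int\na^k\widetilde S_2\cdot\na^{k+1}n\,dx+\int\na^kv\cdot\na^{k+1}\widetilde S_1\,dx,
\end{aligned}
\end{equation*}
after which the whole task reduces to estimating the four terms on the right-hand side.

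For the two linear terms, Cauchy--Schwarz and Young's inequality give $\bigl|\int\na^k(\mu_1\tri v+\mu_2\na\dive v)\cdot\na^{k+1}n\,dx\bigr|\le\ep\|\na^{k+1}n\|_{L^2}^2+C_\ep\|\na^{k+2}v\|_{L^2}^2$, while one integration by parts turns $-\gamma\int\na^kv\cdot\na^{k+1}\dive v\,dx$ into a quantity bounded by $C\|\na^{k+1}v\|_{L^2}^2$. The two nonlinear terms are then treated exactly with the machinery already developed for Lemmas \ref{enn-1} and \ref{enn}: one expands $\na^k$ and $\na^{k+1}$ by the Leibniz and higher-order chain rules (as in \eqref{widef}--\eqref{wideh}) and estimates each product by H\"older's inequality, the Sobolev interpolation inequality of Lemma \ref{inter}, and — for any factor containing $\bar\rho=\rho^*-\rho_\infty$ — by inserting a weight $(1+|x|)^j$, invoking the Hardy inequality of Lemma \ref{hardy}, and using the weighted smallness \eqref{density-control}; for the forcing $\h(n,\bar\rho)\na\bar\rho$ one first writes $\h=\wg(n+\bar\rho)-\wg(\bar\rho)$ and uses the pointwise bounds \eqref{h}, \eqref{h2} to extract an extra factor of $n$. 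The two contributions to watch are the piece $\wg(n+\bar\rho)\na^{k+1}n$ of $\na^k\widetilde S_2$ paired with $\na^{k+1}n$, which is $\le\|\wg(n+\bar\rho)\|_{L^\infty}\|\na^{k+1}n\|_{L^2}^2\le C\delta\|\na^{k+1}n\|_{L^2}^2$, and the piece $\wf(n+\bar\rho)(\mu_1\tri v+\mu_2\na\dive v)$, which contributes $\le C\delta\|\na^{k+2}v\|_{L^2}\|\na^{k+1}n\|_{L^2}$. For the last term, since $\widetilde S_1=-\f{\mu_1\gamma}{\mu}\dive[(n+\bar\rho)v]$, I would integrate by parts once to move a derivative onto $\na^kv$, reducing it to $C\|\na^{k+1}v\|_{L^2}\|\na^{k+1}[(n+\bar\rho)v]\|_{L^2}$, and then bound $\|\na^{k+1}[(n+\bar\rho)v]\|_{L^2}$ by $C\delta(\|\na^{k+1}n\|_{L^2}+\|\na^{k+1}v\|_{L^2})$ using \eqref{density-control}, Lemma \ref{inter}, and the iterated Hardy inequality. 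Altogether the nonlinear contribution is $\le C\delta(\|\na^{k+1}n\|_{L^2}^2+\|\na^{k+1}v\|_{L^2}^2+\|\na^{k+2}v\|_{L^2}^2)$, plus an extra $\ep\|\na^{k+1}n\|_{L^2}^2$ from Young's inequality.

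Finally, choosing $\ep$ and $\delta$ small, the terms proportional to $\|\na^{k+1}n\|_{L^2}^2$ on the right are absorbed into $\gamma\|\na^{k+1}n\|_{L^2}^2$ on the left, leaving exactly \eqref{en3} with a constant $C_2$ depending only on $\gamma,\mu_1,\mu_2$. I expect the main obstacle to be purely combinatorial rather than conceptual: as in Lemmas \ref{enn-1} and \ref{enn}, the $\bar\rho$-dependent pieces — the linear-in-$v$ part $\dive[\bar\rho v]$ hidden in $\widetilde S_1$ and the forcing $\h(n,\bar\rho)\na\bar\rho$ in $\widetilde S_2$ — force one to distribute the spatial weights carefully, so that each derivative $\na^{\gamma_i}\bar\rho$ is matched with the power $(1+|x|)^{\gamma_i}$ and the remaining weighted $v$- or $n$-factor is brought, through repeated Hardy inequalities, down to $\|\na^{k+1}v\|_{L^2}$, $\|\na^{k+2}v\|_{L^2}$, or a small multiple of $\|\na^{k+1}n\|_{L^2}$. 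The point that makes this work is that every one of these terms carries at least one genuinely small factor (a power of $\delta$ from $\|n\|$, $\|v\|$, or a weighted norm of $\bar\rho$), so that no term of the form $\|\na^kn\|_{L^2}^2$ or a full-strength $\|\na^{k+1}n\|_{L^2}^2$ ever survives on the right-hand side, which is exactly what is needed for \eqref{en3} to have no $n$-norm on its right side.
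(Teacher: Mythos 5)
Your proposal is correct and follows essentially the same route as the paper: test the $\na^k$-differentiated momentum equation against $\na^{k+1}n$, convert $\int\na^kv_t\cdot\na^{k+1}n\,dx$ into $\f{d}{dt}\int\na^kv\cdot\na^{k+1}n\,dx$ plus remainders via the continuity equation, and bound the nonlinear terms with the Hardy/Sobolev-interpolation machinery of Lemmas \ref{enn-1} and \ref{enn} so that every $\|\na^{k+1}n\|_{L^2}^2$ contribution on the right carries a factor $\ep$ or $C\delta$ and is absorbed. The only (immaterial) difference is the direction of the integration by parts on the $\widetilde S_1$ term, where the paper pairs $\na^{k+2}v$ with $\na^{k-1}\widetilde S_1$ rather than $\na^{k+1}v$ with $\na^{k}\widetilde S_1$.
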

\begin{proof}
	Applying differential operator $\na^k$ to $\eqref{ns5}_2$, multiplying the resulting equation by $\na^{k+1}n$, and integrating over $\mathbb{R}^3$, one arrives at\beq\label{vnjc}
	\begin{split}
		\int \na^{k}v_t\cdot\na^{k+1}n dx+\|\na^{k+1}n\|_{L^2}^2\leq C \|\na^{k+2}v\|_{L^2}^2+\int \na^k\widetilde{S}_2\cdot\na^{k+1}n dx.
	\end{split}
	\eeq
	In order to deal with $\int \na^{k}v_t\cdot\na^{k+1}n dx$, we turn
	the time derivative of velocity to the density.
	Then, applying differential operator $\na^k$ to the mass equation $\eqref{ns5}_1$, we find
	\[\na^kn_t+\gamma\na^{k}\dive v=\na^k \widetilde{S}_1.\]
	Hence, we can transform time derivative to the spatial derivative, i.e.,\beno
	\begin{split}
		\int \na^{k}v_t\cdot\na^{k+1}n dx=&\f{d}{dt}\int \na^{k}v\cdot\na^{k+1}n dx-\int \na^{k}v\cdot\na^{k+1}n_t dx\\
		=&\f{d}{dt}\int \na^{k}v\cdot\na^{k+1}n dx+\gamma\int \na^{k} v\cdot\na^{k+1}\dive v dx-\int \na^{k} v\cdot\na^{k+1}\widetilde{S}_1 dx\\
		=&\f{d}{dt}\int \na^{k}v\cdot\na^{k+1}n dx-\gamma\|\na^{k}\dive v\|_{L^2}^2-\int \na^{k+1}\dive v\cdot\na^{k-1}\widetilde{S}_1 dx
	\end{split}
	\eeno
	Substituting the identity above into \eqref{vnjc} and integrating by parts yield\beq\label{nvjc2}
	\begin{split}
		&\f{d}{dt}\int \na^{k}v\cdot\na^{k+1}n dx+\|\na^{k+1}n\|_{L^2}^2\\
		\leq& C \|(\na^{k+1}v,\na^{k+2}v)\|_{L^2}^2+C\int \na^{k+1}\dive v\cdot\na^{k-1}\widetilde{S}_1dx-C\int \na^k\widetilde{S}_2\cdot\na^{k+1}n dx.
	\end{split}
	\eeq
	As for the term of $\widetilde{S}_1$.
		It then follows in a similar way to the estimates of term from $I_1$ to $I_4$ in Lemma \ref{enn-1} that
	\beq\label{ss1}
	\begin{split}
		\Big|\int \na^{k+1}\dive v\cdot\na^{k-1}\widetilde{S}_1 dx\Big|\leq C\|\na^{k+2}v\|_{L^2}\|\na^{k-1}\widetilde{S}_1\|_{L^2}
		\leq C \delta \|\na^{k+1}n\|_{L^2}^2+C\|(\na^{k+1}v,\na^{k+2}v)\|_{L^2}^2.
	\end{split}
	\eeq
	To deal with the term of $\widetilde{S}_2$, we only need to follow the idea as the estimates of term from $L_1$ to $L_4$ in Lemma \ref{enn}. Hence, we
	give the estimates as follow
	\beq\label{ss2}
	\begin{split}
		\Big|\int \na^k\widetilde{S}_2\cdot\na^{k+1}n dx\Big|\leq C \|\na^k\widetilde{S}_2\|_{L^2}\|\na^{k+1}n\|_{L^2}\leq C \delta\|(\na^{k+1}n,\na^{k+2}v)\|_{L^2}^2.
	\end{split}
	\eeq
	We then utilize \eqref{ss1} and \eqref{ss2} in \eqref{nvjc2}, to deduce \eqref{en3} directly.
\end{proof}

\underline{\noindent\textbf{The proof of Proposition \ref{nenp}.}}
With the help of Lemmas \ref{enn-1}-\ref{ennjc},
it is easy to establish the estimate \eqref{eml}.
Therefore, we complete the proof of Proposition \ref{nenp}.

\subsection{Optimal decay of higher order derivative}
In this subsection, we will study the optimal decay rate for the
$k-th$ $(2 \leq k\leq N-1)$ order spatial derivatives of global solution.
In order to achieve this target, the optimal decay rate of higher order spatial
derivative will be established by the lower one.
In this aspect, the Fourier splitting method, developed by Schonbek(see \cite{Schonbek1985}),
is applied to establish the optimal decay rate for higher order derivative
of global solution in \cite{{Schonbek-Wiegner},{gao2016}}.
However, We're going to use time weighted and mathematical induction to solve this problem.

\begin{lemm}\label{N-1decay}
	Under the assumption of Theorem \ref{them3}, for $0\leq k\leq N-1$, we have
	\beq\label{n1h1}
	\|\na^k(n,v)\|_{H^{N-k}}\leq C (1+t)^{-\f34-\f k2},
	\eeq
	where $C$ is a positive constant independent of time.
\end{lemm}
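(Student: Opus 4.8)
**The plan is to prove Lemma \ref{N-1decay} by induction on $k$, using the time‑weighted energy method together with Proposition \ref{nenp} and the time integrability it yields.** The base cases $k=0,1$ are exactly the decay estimate \eqref{basic-decay}, which is already available from combining the energy estimate \eqref{energy-thm-01} with the decay of the linearized system à la \cite{duan2007}. So I would fix $l\in\{1,\dots,N-2\}$, assume that \eqref{n1h1} holds for $k=l$, i.e. $\|\na^l(n,v)\|_{H^{N-l}}\le C(1+t)^{-\frac34-\frac l2}$, and prove it for $k=l+1$. The key structural input is Proposition \ref{nenp}, which gives, for $1\le l\le N-1$,
\[
\f{d}{dt}\mathcal{E}^N_l(t)+\eta_2\sum_{m=l}^{N-1}\|\na^{m+1}n\|_{L^2}^2
+\sum_{m=l}^{N}\|\na^{m+1}v\|_{L^2}^2\le 0,
\]
together with the equivalence $\mathcal{E}^N_l(t)\sim\|\na^l(n,v)\|_{H^{N-l}}^2$ from \eqref{emleq}. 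Observe that the dissipation controls the full $H^{N-l}$ norm of $\na^{l+1}(n,v)$, i.e. $\|\na^{l+1}(n,v)\|_{H^{N-l-1}}^2$, up to the missing top density piece which is supplied by the $\na^{m+1}n$ sum.

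\textbf{First step: extract time integrability at level $l$.} Multiplying the Proposition \ref{nenp} inequality (at index $l$) by $(1+t)^{l+\frac32+\ep_0}$ for a small fixed $\ep_0\in(0,1)$, integrating in time, and using the induction hypothesis $\mathcal{E}^N_l(t)\le C(1+t)^{-(l+\frac32)}$ to absorb the error term $C(1+t)^{l+\frac12+\ep_0}\mathcal{E}^N_l(t)\le C(1+t)^{-1+\ep_0}$, I get
\[
(1+t)^{l+\frac32+\ep_0}\mathcal{E}^N_l(t)
+\int_0^t(1+\tau)^{l+\frac32+\ep_0}\Big(\sum_{m=l}^{N-1}\|\na^{m+1}n\|_{L^2}^2+\sum_{m=l}^{N}\|\na^{m+1}v\|_{L^2}^2\Big)d\tau
\le C(1+t)^{\ep_0}.
\]
This is the analogue of Lemma \ref{integrability}: it tells me that $(1+\tau)^{l+\frac32+\ep_0}\|\na^{l+1}(n,v)\|_{H^{N-l-1}}^2$ has time integral bounded by $C(1+t)^{\ep_0}$.

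\textbf{Second step: weighted estimate at level $l+1$.} Now apply Proposition \ref{nenp} at index $l+1$ (valid since $l+1\le N-1$):
\[
\f{d}{dt}\mathcal{E}^N_{l+1}(t)+\eta_2\sum_{m=l+1}^{N-1}\|\na^{m+1}n\|_{L^2}^2
+\sum_{m=l+1}^{N}\|\na^{m+1}v\|_{L^2}^2\le 0.
\]
Multiply by $(1+t)^{l+\frac52+\ep_0}$, integrate, and handle the error term $C(1+t)^{l+\frac32+\ep_0}\mathcal{E}^N_{l+1}(t)\le C(1+t)^{l+\frac32+\ep_0}\|\na^{l+1}(n,v)\|_{H^{N-l-1}}^2$ by feeding it into the time‑integral bound from the first step; this is exactly the term controlled by $\int_0^t(1+\tau)^{l+\frac32+\ep_0}(\cdots)d\tau\le C(1+t)^{\ep_0}$. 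Hence
\[
(1+t)^{l+\frac52+\ep_0}\mathcal{E}^N_{l+1}(t)\le \mathcal{E}^N_{l+1}(0)+C(1+t)^{\ep_0},
\]
which, using the equivalence \eqref{emleq} and discarding $\ep_0$, gives $\|\na^{l+1}(n,v)\|_{H^{N-l-1}}^2\le C(1+t)^{-(l+\frac52)}$, i.e. \eqref{n1h1} for $k=l+1$. This closes the induction, and the lemma follows for all $0\le k\le N-1$.

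\textbf{The main obstacle} I anticipate is bookkeeping rather than anything deep: one must verify that the error term produced by differentiating the time weight — which at level $l+1$ is a multiple of $(1+t)^{l+\frac32+\ep_0}\mathcal{E}^N_{l+1}(t)$, involving $\|\na^{l+1}(n,v)\|_{H^{N-l-1}}^2$ and not merely $\|\na^{l+1}(n,v)\|_{L^2}^2$ — is genuinely dominated by the dissipation integral harvested in the first step, which indeed contains $\sum_{m=l}^{N}\|\na^{m+1}v\|_{L^2}^2+\sum_{m=l}^{N-1}\|\na^{m+1}n\|_{L^2}^2\ge c\|\na^{l+1}(n,v)\|_{H^{N-l-1}}^2$. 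The only subtlety is the top-order density norm $\|\na^N n\|_{L^2}$: it appears in $\mathcal{E}^N_{l+1}$ but the level-$l$ dissipation sum does include $\|\na^N n\|_{L^2}^2$ (the $m=N-1$ term), so there is no gap. A secondary point is that Proposition \ref{nenp} is only stated for $1\le l\le N-1$, so the argument produces decay up to $k=N-1$; the case $k=N$ is precisely what the later part of Section \ref{rhox} (the high-frequency cross-term estimate \eqref{highesthigh}) is designed to handle separately, and is not claimed here.
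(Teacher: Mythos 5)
Your proposal is correct and follows essentially the same route as the paper: induction on $k$ starting from \eqref{basic-decay}, multiplying \eqref{eml} at level $l$ by $(1+t)^{l+\frac32+\ep_0}$ to harvest the time-integrated dissipation, and then using that integral to absorb the weight-differentiation error when \eqref{eml} at level $l+1$ is multiplied by $(1+t)^{l+\frac52+\ep_0}$. Your check that the level-$l$ dissipation indeed dominates $\|\na^{l+1}(n,v)\|_{H^{N-l-1}}^2$, including the top density term $\|\na^N n\|_{L^2}^2$, matches the paper's use of \eqref{energy2}.
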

\begin{proof}
We will take the strategy of induction to give the proof for
the decay rate \eqref{n1h1}. In fact, the decay rate \eqref{basic-decay}
implies \eqref{n1h1} holds true for the the case $k=0, 1$.
By the general step of induction, assume that the decay rate \eqref{n1h1}
holds on for the case $k=m$, i.e.,
	\beq\label{inducass}
	\|\na^m (n,v)\|_{H^{N-m}}\leq C (1+t)^{-\f34-\f m2},
	\eeq
for $m=1,...,N-2$.
Choosing the integer $l=m$ in \eqref{eml} and
multiplying it by $(1+t)^{\f32+m+\ep_0} (0<\ep_0<1)$, we have
\beno
\begin{split}
\f{d}{dt}\Big\{(1+t)^{\f32+m+\ep_0} \mathcal{E}^N_m(t)\Big\}
+(1+t)^{\f32+m+\ep_0}\big(\|\na^{m+1}n\|_{H^{N-m-1}}^2
+\|\na^{m+1}v\|_{H^{N-m}}^2\big)\leq C(1+t)^{\f12+m+\ep_0} \mathcal{E}^N_m(t).
\end{split}
\eeno
Integrating with respect to $t$, using the equivalent relation \eqref{emleq} and the decay estimate \eqref{inducass}, one obtains
\beq\label{energy2}
\begin{split}
&(1+t)^{\f32+m+\ep_0} \mathcal{E}^N_m(t)	
    +\int_0^t(1+\tau)^{\f32+m+\ep_0}\big(\|\na^{m+1}n\|_{H^{N-m-1}}^2
    +\|\na^{m+1}v\|_{H^{N-m}}^2\big)d\tau\\
\leq& \mathcal{E}^N_m(0)+C\int_0^t(1+\tau)^{\f12+m+\ep_0} \mathcal{E}^N_m(\tau)d\tau\\
\leq&C\|\na^m(n_0,v_0)\|_{H^{N-m}}^2
     +C\int_0^t(1+\tau)^{\f12+m+\ep_0} \|\na^m(n,v)\|_{H^{N-m}}^2d\tau\\
\leq&C\|\na^m(n_0,v_0)\|_{H^{N-m}}^2
     +C\int_0^t(1+\tau)^{-1+\ep_0}d\tau\leq C(1+t)^{\ep_0}.
\end{split}
\eeq
On the other hand, taking $l=m+1$ in \eqref{eml}, we have
\beq\label{Ekk}
\begin{split}	\f{d}{dt}\mathcal{E}^{N}_{m+1}(t)
+\|\na^{m+2}n\|_{H^{N-m-2}}^2+\|\na^{m+2}v\|_{H^{N-m-1}}^2\leq 0.
\end{split}
\eeq
Multiplying \eqref{Ekk} by $(1+t)^{\f52+m+\ep_0}$,
integrating over $[0, t]$ and using  estimate \eqref{energy2}, we find
\beno
\begin{split}
&(1+t)^{\f52+m+\ep_0}\mathcal{E}^{N}_{m+1}(t)
+\int_0^t(1+\tau)^{\f52+m+\ep_0}\big(\|\na^{m+2}n\|_{H^{N-m-2}}^2
+\|\na^{k+2}v\|_{H^{N-m-1}}^2\big)d\tau\\
\leq& \mathcal{E}^{N}_{m+1}(0)
+\int_0^t(1+\tau)^{\f32+m+\ep_0}\mathcal{E}^{N}_{m+1}(\tau)d\tau\\
\leq&C\|\na^{m+1}(n_0,v_0)\|_{H^{N-m-1}}^2
+\int_0^t(1+\tau)^{\f32+m+\ep_0}\|\na^{m+1}(n,v)\|_{H^{N-m-1}}^2d\tau
    	\leq C(1+t)^{\ep_0},
\end{split}
\eeno
which, togeter with the equivalent relation \eqref{emleq}, yields immediately
\beno
\|\na^{m+1}(n,v)\|_{H^{N-m-1}}\leq C(1+t)^{-\f34-\f {m+1}2}.
\eeno
Then, the decay estimate \eqref{n1h1} holds on for case of $k=m+1$.
By the general step of induction, we complete the proof of this lemma.
\end{proof}

\subsection{Optimal decay of critical derivative}

In this subsection, our target is to establish the optimal decay rate for the
$N-th$ order spatial derivative of global solution $(n, v)$ as it tends to zero.
The decay rate of $N-th$ order derivative of global solution $(n, v)$ in Lemma \ref{N-1decay}
is not optimal since it only has the same decay rate as the lower one.
The loss of time decay estimate comes from the appearance of cross term
$\frac{d}{dt}\int \nabla^{N-1} v \cdot \nabla^N n dx$ in energy
when we set up the dissipation estimate for the density.
Now let us introduce some notations that will be used frequently in this subsection.
Let $0\leq\varphi_0(\xi)\leq1$ be a function in $C_0^{\infty}(\mathbb{R}^3)$ such that\begin{equation*}
\begin{split}
\varphi_0(\xi)=\left\{
\begin{array}{ll}
1,\quad \text{for}~~|\xi|\leq \f{\eta}{2},\\[1ex]
0,\quad\text{for}~~|\xi|\geq \eta,   \\[1ex]
\end{array}
\right.
\end{split}
\end{equation*}
where $\eta$ is a fixed positive constant. Based on the Fourier transform, we can define a low-medium-high-frequency decomposition $(f^l(x),f^h(x))$ for a function $f(x)$ as follows:
\beq\label{def-h-l}
f^l(x)\overset{def}{=}\mathcal{F}^{-1}(\varphi_0(\xi)\widehat{f}(\xi))~~\text{and}~~f^h(x)\overset{def}{=}f(x)-f^l(x).
\eeq

\begin{lemm}\label{highfrequency}
Under the assumptions of Theorem \ref{them3},
there exists a positive small constant $\eta_3$, such that
\beq\label{en6}
\begin{split}	&\f{d}{dt}\Big\{\|\na^{N}(n,v)\|_{L^2}^2-\eta_3\int_{|\xi|\geq\eta}\widehat{\na^{N-1}v}\cdot \overline{\widehat{\na^{N}n}}d\xi \Big\}+\|\na^{N}v^h\|_{L^2}^2+\eta_3\|\na^{N}n^h\|_{L^2}^2\\
		\leq&  C_4\|\na^{N}(n^l, v^l)\|_{L^2}^2+C(1+t)^{-3-N},
\end{split}
\eeq
	where $C_4$ is a positive constant independent of time.
\end{lemm}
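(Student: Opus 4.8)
The plan is to combine three ingredients: (i) the $N$-th order energy estimate from Lemma \ref{enn}, (ii) a new dissipation estimate for $\nabla^N n$ localized to high frequencies, modeled on Lemma \ref{ennjc} but with the cross term $\int \nabla^{N-1}v\cdot\nabla^N n\,dx$ replaced by its high-frequency Fourier analogue $\int_{|\xi|\ge\eta}\widehat{\nabla^{N-1}v}\cdot\overline{\widehat{\nabla^N n}}\,d\xi$, and (iii) the decay estimates $\|\nabla^k(n,v)\|_{H^{N-k}}\le C(1+t)^{-3/4-k/2}$ from Lemma \ref{N-1decay}. First I would start from \eqref{en2} in Lemma \ref{enn}, which gives
$$
\f{d}{dt}\|\na^N(n,v)\|_{L^2}^2+\|\na^{N+1}v\|_{L^2}^2\leq C\delta\|(\na^N n,\na^{N+1}v)\|_{L^2}^2 .
$$
For small $\delta$ the $\delta\|\na^{N+1}v\|_{L^2}^2$ term is absorbed into the dissipation on the left, but the term $C\delta\|\na^N n\|_{L^2}^2$ cannot be absorbed — this is exactly the density's missing dissipation, and it is what the second ingredient must supply. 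I would split $\|\na^N n\|_{L^2}^2=\|\na^N n^l\|_{L^2}^2+\|\na^N n^h\|_{L^2}^2$; the low-frequency piece goes to the right-hand side of \eqref{en6}, and the high-frequency piece is what we recover from the cross-term estimate.

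Next I would derive the analogue of \eqref{en3}. Applying $\na^N$ to $\eqref{ns5}_2$, taking the Fourier transform, multiplying by $\overline{\widehat{\na^N n}}\,|\xi|^{2}$-type weights (equivalently, pairing $\widehat{\na^{N-1}v}$ with $\overline{\widehat{\na^N n}}$ restricted to $|\xi|\ge\eta$), and using the mass equation $\eqref{ns5}_1$ to convert $\partial_t n$ into $\dive v$ plus $\widetilde S_1$, one obtains a differential identity for $\f{d}{dt}\int_{|\xi|\ge\eta}\widehat{\na^{N-1}v}\cdot\overline{\widehat{\na^N n}}\,d\xi$. The key gain is that on $\{|\xi|\ge\eta\}$ the Poincaré-type inequality $\||\xi|^{N}\widehat{f}\|_{L^2(|\xi|\ge\eta)}\le \eta^{-1}\||\xi|^{N+1}\widehat f\|_{L^2(|\xi|\ge\eta)}$ lets us control $\|\na^N n^h\|_{L^2}$ and also bound the $\gamma\|\na^{N-1}\dive v\|_{L^2}^2$-type term (which in Lemma \ref{ennjc} was harmless only because of the extra derivative) by $\|\na^N v^h\|_{L^2}^2\le\|\na^N v\|_{L^2}^2$. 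The nonlinear contributions $\int\cdots\widetilde S_1$ and $\int\cdots\widetilde S_2$ are estimated exactly as in Lemmas \ref{enn-1}--\ref{enn}: by Hölder, Sobolev interpolation, Hardy's inequality and \eqref{density-control}, they are bounded by $C\delta\|(\na^N n,\na^{N+1}v)\|_{L^2}^2$ plus lower-order terms controlled using \eqref{n1h1}; the terms that genuinely decay, such as those involving $\wf,\wg,\h$ paired with intermediate derivatives, produce the $C(1+t)^{-3-N}$ right-hand side after invoking $\|\na^k(n,v)\|_{H^{N-k}}^2\le C(1+t)^{-3/2-k}$ and distributing the powers. Multiplying this high-frequency cross-term identity by a small constant $\eta_3$ and adding it to (the small-$\delta$-cleaned version of) \eqref{en2} yields \eqref{en6}: the $\eta_3\|\na^N n^h\|_{L^2}^2$ appears with a good sign, the $\f{d}{dt}$ terms combine into the claimed equivalent energy, the leftover $\|\na^N v\|_{L^2}^2\le\|\na^N v^h\|_{L^2}^2+\|\na^N v^l\|_{L^2}^2$ is split (high part absorbed, low part to the RHS), and all $C\delta\|\na^{N+1}v\|_{L^2}^2$ terms are absorbed by choosing $\eta_3$ and $\delta$ small relative to $\mu_1$.

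The main obstacle will be twofold. First, one must verify that the modified energy $\|\na^N(n,v)\|_{L^2}^2-\eta_3\int_{|\xi|\ge\eta}\widehat{\na^{N-1}v}\cdot\overline{\widehat{\na^N n}}\,d\xi$ is genuinely equivalent to $\|\na^N(n,v)\|_{L^2}^2$; this follows since by Cauchy--Schwarz and the high-frequency Poincaré bound $\bigl|\int_{|\xi|\ge\eta}\widehat{\na^{N-1}v}\cdot\overline{\widehat{\na^N n}}\,d\xi\bigr|\le \eta^{-1}\|\na^N v\|_{L^2}\|\na^N n\|_{L^2}$, so $\eta_3$ small suffices — but the frequency truncation has to be handled carefully because $\widehat{\na^{N-1}v}$ restricted to high frequencies is not literally $\na^{N-1}(v^h)$ up to the symbol $i\xi$, so one should phrase everything directly on the Fourier side. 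Second, bookkeeping the decay-producing nonlinear terms to reach the exponent $-3-N$ (rather than something weaker) requires that in each product one factor carries at least $\na^1$ so that the total derivative count plus the two ``$\f34$''s from the lowest-order factors sum correctly; the term $\h(n,\bar\rho)\na\bar\rho$ is the delicate one, since $\bar\rho$ does not decay in time, but \eqref{h} and \eqref{h2} show $\h$ and its derivatives are pointwise controlled by $|n|$ and its derivatives times weighted derivatives of $\bar\rho$, so the time decay is inherited from $n$ while the spatial weights are absorbed by Hardy's inequality and \eqref{density-control}. Once these two points are settled the inequality \eqref{en6} follows by collecting terms and choosing $\eta,\eta_3,\delta$ small, which completes the proof.
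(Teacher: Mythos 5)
Your proposal is correct and follows essentially the same route as the paper: the paper likewise combines the $N$-th order energy inequality \eqref{en2} with a high-frequency Fourier-side version of the cross-term dissipation estimate (pairing $\widehat{\na^{N-1}v}$ with $\overline{\widehat{\na^{N}n}}$ on $\{|\xi|\ge\eta\}$, converting $\partial_t$ via the mass equation), estimates the nonlinear terms $\widetilde S_1,\widetilde S_2$ by Hardy/Sobolev interpolation and the decay \eqref{n1h1} to produce the $C(1+t)^{-3-N}$ remainder, and closes by the frequency inequalities \eqref{vhvl} after choosing $\eta_3,\delta$ small. The two delicate points you flag (equivalence of the modified energy and the treatment of $\h(n,\bar\rho)\na\bar\rho$) are exactly the ones the paper addresses via \eqref{endj} and the $M_{44}$ estimate.
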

	\begin{proof}
Taking differential operating $\na^{N-1}$ to the equation \eqref{ns5}, it holds true
\beq\label{ns6}
\left\{\begin{array}{lr}
\na^{N-1}n_t +\gamma\na^{N-1}\dive v=\na^{N-1}\widetilde S_1,\\
\na^{N-1}v_t+\gamma\na^{N} n-\mu_1\na^{N-1}\tri v-\mu_2\na^{N}\dive v
=\na^{N-1}\widetilde  S_2.
\end{array}\right.
\eeq
Taking the Fourier transform of $\eqref{ns6}_2$, multiplying the resulting equation by $\overline{\widehat{\na^{N}n}}$ and integrating on $\{\xi||\xi|\geq \eta\}$, it holds true
\beq\label{f1}
\begin{split}
&\int_{|\xi|\geq\eta}\widehat{\na^{N-1}v_t}\cdot \overline{\widehat{\na^{N}n}}d\xi+\gamma\int_{|\xi|\geq\eta}|\widehat{\na^Nn}|^2d\xi\\
=&\int_{|\xi|\geq\eta}\big(\mu_1\widehat{\na^{N-1}\tri v}+\mu_2\widehat{\na^N\dive v}\big)\cdot \overline{\widehat{\na^{N}n}}d\xi+\int_{|\xi|\geq\eta}\widehat{\na^{N-1}\widetilde  S_2}\cdot \overline{\widehat{\na^{N}n}}d\xi.
\end{split}
\eeq
It is easy to deduce from $\eqref{ns6}_1$ that
\beno
\begin{split}
		\widehat{\na^{N-1}v_t}\cdot \overline{\widehat{\na^{N}n}}=&-i\xi\widehat{\na^{N-1}v_t}\cdot \overline{\widehat{\na^{N-1}n}}=-\widehat{\na^Nv_t}\cdot \overline{\widehat{\na^{N-1}n}}\\
		=&-\pa_t(\widehat{\na^Nv}\cdot \overline{\widehat{\na^{N-1}n}})+\widehat{\na^Nv}\cdot \overline{\widehat{\na^{N-1}n_t}}\\
		=&-\pa_t(\widehat{\na^Nv}\cdot \overline{\widehat{\na^{N-1}n}})-\gamma\widehat{\na^Nv}\cdot
\overline{\widehat{\na^{N-1}\dive v}}+\widehat{\na^Nv}\cdot \overline{\widehat{\na^{N-1}\widetilde{S}_1}}.
	\end{split}
	\eeno
Then, substituting this identity into identity \eqref{f1}, we have
\beq\label{f2}
\begin{split}
		&-\f{d}{dt}\int_{|\xi|\geq\eta}\widehat{\na^{N}v}\cdot \overline{\widehat{\na^{N-1}n}}d\xi+\gamma\int_{|\xi|\geq\eta}|\widehat{\na^Nn}|^2d\xi\\
		=&\int_{|\xi|\geq\eta}\big(\mu_1\widehat{\na^{N-1}\tri v}+\mu_2\widehat{\na^N\dive v}\big)\cdot \overline{\widehat{\na^{N}n}}d\xi+\gamma\int_{|\xi|\geq\eta}\widehat{\na^{N}v}\cdot \overline{\widehat{\na^{N-1}\dive v}}d\xi \\
		&\quad-\int_{|\xi|\geq\eta}\widehat{\na^{N}v}\cdot \overline{\widehat{\na^{N-1}\widetilde  S_1}}d\xi +\int_{|\xi|\geq\eta}\widehat{\na^{N-1}\widetilde  S_2}\cdot \overline{\widehat{\na^{N}n}}d\xi\\
		\overset{def}{=}&M_1+M_2+M_3+M_4.
\end{split}
\eeq
The application of Cauchy inequality yields directly
\beq\label{N1}
\begin{split}
|M_1|
\leq& C\int_{|\xi|\geq\eta}|\xi|^{2N+1}|\widehat{v}||\widehat{n}|d\xi
\leq\ep \int_{|\xi|\geq\eta}|\xi|^{2N}|\widehat{n}|^2d\xi
        +C_{\ep}\int_{|\xi|\geq\eta}|\xi|^{2(N+1)}|\widehat{v}|^2d\xi,
\end{split}
\eeq
for some small $\ep$, which will be determined later.
Obviously, it holds true
\beq\label{N2}
|M_2| \leq  C\int_{|\xi|\geq\eta}|\xi|^{2N}|\widehat{v}|^2d\xi.
\eeq
Using the Cauchy inequality and definition of $\widetilde{S}_1$, it is easy to check that
\beq\label{m3}
\begin{split}
|M_3|
\leq&C \int_{|\xi|\geq\eta}|\xi|^{2(N+1)}|\widehat{v}|^2d\xi
       +C\int_{|\xi|\geq\eta}|\xi|^{2(N-2)}|\widehat{\widetilde  S_1}|^2 d\xi \\
\leq&C \int_{|\xi|\geq\eta}|\xi|^{2(N+1)}|\widehat{v}|^2d\xi
       +C\int_{|\xi|\geq\eta}|\xi|^{2(N-2)}|\widehat{\na nv+n\na v}|^2d\xi\\
    & +C\int_{|\xi|\geq\eta}|\xi|^{2(N-2)}|\widehat{\na\bar\rho v+\bar\rho\na v}|^2d\xi\\
\overset{def}{=}&\int_{|\xi|\geq\eta}|\xi|^{2(N+1)}|\widehat{v}|^2d\xi+M_{31}+M_{32}.
\end{split}
\eeq
Using Plancherel Theorem  and Sobolev inequality, it is easy to check that
\beq\label{m31}
\begin{split}
M_{31}
\le  &C\|\na^{N-2}(\na nv+n\na v)\|_{L^2}^2\\
\leq &C\big(\|\na n\|_{L^\infty}^2\|\na^{N-2}v\|_{L^2}^2
      +\|\na^{N-1}n\|_{L^2}^2\|v\|_{L^\infty}^2
      +\|n\|_{L^\infty}^2\|\na^{N-1}v\|_{L^2}^2
      +\|\na^{N-2}n\|_{L^2}^2\|\na v\|_{L^\infty}^2\big)\\		
\leq&C\big(\|\na^{2}(n,v)\|_{H^1}^2\|\na^{N-2}(n,v)\|_{L^2}^2
      +\|\na(n,v)\|_{H^1}^2\|\na^{N-1}v\|_{L^2}^2\big)\\
\leq&C(1+t)^{-3-N},
\end{split}
\eeq
where we have used the decay \eqref{n1h1} in the last inequality.
Similarly, we also apply Hardy's inequality to obtain
\beq\label{m32}
\begin{split}
M_{32}
\leq &C\int_{|\xi|\geq\eta}|\xi|^{2(N-1)}|\widehat{\na\bar\rho v+\bar\rho\na v}|^2d\xi
\le  C\|\na^{N-1}(\na \bar\rho v+\bar\rho\na v)\|_{L^2}^2 \\
		\leq&C\sum_{0\leq l\leq N-1}\Big(\|(1+|x|)^{l+1}\na^{l+1}\bar\rho\|_{L^\infty}\|\f{\na^{N-1-l}v}{(1+|x|)^{l+1}}\|_{L^2}+\|(1+|x|)^{l}\na^{l}\bar\rho\|_{L^\infty}\|\f{\na^{N-l}v}{(1+|x|)^{l}}\|_{L^2}\Big)\|\na^{N+1}v\|_{L^2}\\
		\leq&C\delta\|(\na^{N}v,\na^{N+1}v)\|_{L^2}^2,
\end{split}
\eeq
where we have used the fact that for any suitable function $\phi$,
there exists a positive constant $C$ dependent only on $\eta$ such that
\beno
\int_{|\xi|\geq\eta}|\xi|^{2(N-2)}|\widehat{\phi}|^2d\xi\leq C\int_{|\xi|\geq\eta}|\xi|^{2(N-1)}|\widehat{\phi}|^2d\xi.
\eeno
Substituting the estimates \eqref{m31} and \eqref{m32} into \eqref{m3},
one can get that
\beq\label{N3}
|M_3|\leq C\delta\|(\na^{N}v,\na^{N+1}v)\|_{L^2}^2 +C(1+t)^{-3-N}.
\eeq
Using Cauchy inequality and the definition of $\widetilde  S_2$, we have
\beq\label{m4}
\begin{split}
|M_4|
\leq &C\int_{|\xi|\geq\eta}|\xi|^{2N-1}|\widehat{\widetilde  S_2}|| {\widehat{n}}|d\xi\\
		\leq&C\int_{|\xi|\geq\eta}|\xi|^{2N-1}| {\widehat{n}}||\widehat{v\cdot \na v}|d\xi
     +C\int_{|\xi|\geq\eta}|\xi|^{2N-1}| {\widehat{n}}
        |\big(\mu_1|\widehat{\wf(n+\bar\rho)\tri v}|
         +\mu_2|\widehat{\wf(n+\bar\rho)\na\dive v}|\big)d\xi\\
&\quad+C\int_{|\xi|\geq\eta}|\xi|^{2N-1}| {\widehat{n}}|
          |\widehat{\wg(n+\bar\rho)\na n}|d\xi
      +C\int_{|\xi|\geq\eta}|\xi|^{2N-1}| {\widehat{n}}|
          |\widehat{\h(n,\bar\rho)\na \bar\rho}|d\xi\\
\overset{def}{=}&M_{41}+M_{42}+M_{43}+M_{44}.
\end{split}
\eeq
In view of Plancherel Theorem, Sobolev inequality and
commutator estimate in Lemma \ref{commutator}, we find
\beq\label{M41}
\begin{split}
M_{41}
\leq &\ep \|\na^Nn\|_{L^2}^2+C_{\ep}\|\na^{N-1}(v\cdot\na v)\|_{L^2}^2\\
\leq &\ep \|\na^Nn\|_{L^2}^2+C_{\ep}\|v\|_{L^\infty}^2\|\na^Nv\|_{L^2}^2
      +C_{\ep}\|[\na^{N-1},v]\cdot\na v\|_{L^2}^2\\
\leq &\ep \|\na^Nn\|_{L^2}^2+C_{\ep}\|\na v\|_{H^1}^2\|\na^Nv\|_{L^2}^2
      +C_{\ep}\|\na v\|_{L^\infty}^2\|\na^{N-1} v\|_{L^2}^2\\
\leq &\ep \|\na^Nn\|_{L^2}^2+C_{\ep}(1+t)^{-3-N},
\end{split}
\eeq
where we have used the estimate \eqref{n1h1} in the last inequality.
Similarly, it is easy to check that
\beq\label{m42}
\begin{split}
M_{42}
\leq &\ep \|\na^Nn\|_{L^2}^2+C_{\ep}\|\na^{N-1}\big(\wf(n+\bar\rho)
        (\mu_1\tri v+\mu_2\na\dive v)\big)\|_{L^2}^2\\
\leq &\ep\|\na^Nn\|_{L^2}^2+C_{\ep}\|(n,\bar\rho)\|_{L^\infty}^2
      \|\na^{N+1}v\|_{L^2}^2+C_{\ep}\|\na n\|_{L^3}^2\|\na^{N}v\|_{L^6}^2
      +C_{\ep}\|(1+|x|)\na \bar\rho\|_{L^\infty}^2 \|\f{\na^{N}v}{1+|x|}\|_{L^2}^2\\
     &\quad+C_{\ep}\sum_{2\leq l\leq N-1}\|\na^l\wf(n+\bar\rho)\na^{N+1-l}v\|_{L^2}^2\\
\leq &\ep \|\na^Nn\|_{L^2}^2+C_{\ep}\delta\|\na^{N+1}v\|_{L^2}^2
      +C_{\ep}\sum_{2\leq l\leq N-1}\|\na^l\wf(n+\bar\rho)\na^{N+1-l}v\|_{L^2}^2.
\end{split}
\eeq
Now let us deal with the last term on the right handside
of \eqref{m42}. Indeed, it is easy to deduce that
\beno
\begin{split}
&\sum_{2\leq l\leq N-1}\|\na^l\wf(n+\bar\rho)\na^{N+1-l}v\|_{L^2}\\
\leq& C\sum_{2\leq l\leq N-1}\sum_{\substack{\gamma_1+\cdots+\gamma_{i+j}=l\\\gamma_1+\cdots+\gamma_{i}=m\\0\leq m\leq l}}\||\na^{\gamma_1}n|\cdots|\na^{\gamma_{i}}n||\na^{\gamma_{i+1}}\bar\rho|\cdots|\na^{\gamma_{i+j}}\bar\rho||\na^{N+1-l}v|\|_{L^2}.
\end{split}
\eeno
For $m=0$, we apply the Hardy inequality to obtain
\beno
\begin{split}
\sum_{2\leq l\leq N-1}
\sum_{\gamma_1+\cdots+\gamma_{j}=l}\|(1+|x|)^{\gamma_{i+1}}\na^{\gamma_{i+1}}\bar\rho\|_{L^\infty}
\cdots\|(1+|x|)^{\gamma_{i+j}}\na^{\gamma_{i+j}}\bar\rho\|_{L^\infty}
\|\f{\na^{N+1-l}v}{(1+|x|)^{l}}\|_{L^2}
\leq C\delta\|\na^{N+1}v\|_{L^2}.
\end{split}
\eeno
For $1\leq m\leq l-1$,  the Sobolev inequality
and decay estimate \eqref{n1h1} imply
\beno
\begin{split}
&\sum_{2\leq l\leq N-1}
\sum_{\substack{\gamma_1+\cdots+\gamma_{i+j}=l\\\gamma_1+\cdots+\gamma_{i}=m\\1\leq m\leq l-1}}
  \|\na^{\gamma_1}n\|_{L^\infty}\cdots\|\na^{\gamma_{i}}n\|_{L^\infty}
  \|(1+|x|)^{\gamma_{i+1}}\na^{\gamma_{i+1}}\bar\rho\|_{L^\infty}
  \cdots\|(1+|x|)^{\gamma_{i+j}}\na^{\gamma_{i+j}}\bar\rho\|_{L^\infty}
  \|\f{\na^{N+1-l}v}{(1+|x|)^{l-m}}\|_{L^2}\\
\leq&C\sum_{2\leq l\leq N-1}\sum_{\substack{\gamma_1+\cdots+\gamma_{i+j}=l\\\gamma_1+\cdots
                                    +\gamma_{i}=m\\1\leq m\leq l-1}}
\|\na^{\gamma_1+1}n\|_{H^1}\cdots\|\na^{\gamma_{i}+1}n\|_{H^1}\|\na^{N+1-m}v\|_{L^2}
\leq C(1+t)^{-\f{4+N}{2}}.
\end{split}
\eeno
For $m=l$, the Sobolev inequality
and decay estimate \eqref{n1h1} yield directly
\beno
\begin{split}
&\sum_{2\leq l\leq N-1}\sum_{\gamma_1+\cdots+\gamma_{i}=l}
\|\na^{\gamma_1}n\|_{L^\infty}
\cdots\|\na^{\gamma_{i-1}}n\|_{L^\infty}
\|\na^{\gamma_{i}}n\|_{L^3}\|\na^{N+1-l}v\|_{L^6}\\
\leq
&\sum_{2\leq l\leq N-1}\sum_{\gamma_1+\cdots+\gamma_{i}=l}\|\na^{\gamma_1+1}n\|_{H^1}\cdots
\|\na^{\gamma_{i-1}+1}n\|_{H^1}\|\na^{\gamma_{i}}n\|_{H^1}\|\na^{N+1-l}v\|_{H^1}
\leq C(1+t)^{-\f{4+N}{2}}.
\end{split}
\eeno
Therefore, we can obtain the following estimate
\beq\label{m421}
\begin{split}
	\sum_{2\leq l\leq N-1}\|\na^l\wf(n+\bar\rho)\na^{N+1-l}v\|_{L^2}
	\leq C\delta\|\na^{N+1}v\|_{L^2}+C(1+t)^{-\f{4+N}{2}},
\end{split}
\eeq
which, together with the estimate \eqref{m42}, yields directly
\beq\label{M42}
M_{42}
	\leq \ep \|\na^Nn\|_{L^2}^2+C_{\ep}\delta \|\na^{N+1}v\|_{L^2}^2+C_{\ep}(1+t)^{-4-N}.
\eeq
One can deal with the term $M_{43}$ in the manner of $M_{42}$. It holds true
\beq\label{m43}
\begin{split}
	M_{43}
	\leq&\ep \|\na^Nn\|_{L^2}^2+C_{\ep}\|(n,\bar\rho)\|_{L^\infty}^2\|\na^{N}n\|_{L^2}^2+C_{\ep}\sum_{1\leq l\leq N-1}\|\na^l\wg(n+\bar\rho)\na^{N-l}n\|_{L^2}^2\\
	\leq&(\ep+C_\ep\delta) \|\na^Nn\|_{L^2}^2+C_{\ep}\sum_{2\leq l\leq N-1}\|\na^l\wg(n+\bar\rho)\na^{N-l}n\|_{L^2}^2.
\end{split}
\eeq
Similar to the estimate \eqref{m421}, it is easy to check that
\beno
\begin{split}
	\sum_{2\leq l\leq N-1}\|\na^l\wg(n+\bar\rho)\na^{N+1-l}v\|_{L^2}
	\leq C\delta\|\na^{N}n\|_{L^2}+C(1+t)^{-\f{3+N}{2}},
\end{split}
\eeno
which, together with the inequality \eqref{m43}, yields directly
\beq\label{M43}
\begin{split}
	M_{43}
	\leq&(\ep+C_\ep\delta) \|\na^Nn\|_{L^2}^2+C_{\ep}(1+t)^{-3-N}.
\end{split}
\eeq
Finally, let us deal with the term $M_{44}$.
Indeed, the Hardy inequality and identity \eqref{h} yield directly
\beq\label{m44}
	\begin{split}
		M_{44}		\leq&\ep\|\na^{N}n\|_{L^2}^2+C_{\ep}\|\h(n,\bar\rho)\na^{N}\bar\rho\|_{L^2}^2+C_{\ep}\sum_{1\leq l\leq N-1}\|\na^l\h(n,\bar\rho)\na^{N-l}\bar\rho\|_{L^2}^2\\
		\leq&\ep\|\na^{N}n\|_{L^2}^2+C_{\ep}\|\f{n}{(1+|x|)^{N}}\|_{L^2}^2\|(1+|x|)^{N}\na^{N}\bar\rho\|_{L^\infty}^2+C_{\ep}\sum_{1\leq l\leq N-1}\|\na^l\h(n,\bar\rho)\na^{N-l}\bar\rho\|_{L^2}^2\\
		\leq&(\ep+C\delta)\|\na^{N}n\|_{L^2}^2+C_{\ep}\sum_{1\leq l\leq N-1}\|\na^l\h(n,\bar\rho)\na^{N-l}\bar\rho\|_{L^2}^2.
	\end{split}
	\eeq
	To deal with the last term on the right side of \eqref{m44}, we employ the estimate \eqref{h2} of $\h$, to find\beno
	\begin{split}
		&\|\na^l\h(n,\bar\rho)\na^{N-l}\bar\rho\|_{L^2}\\
		\leq&C\sum_{\substack{\gamma_1+\cdots+\gamma_{1+j}=l\\\gamma_1=m\\0\leq m\leq l}}\|\f{\na^{\gamma_1}n}{(1+|x|)^{N-m}}\|_{L^2}\|(1+|x|)^{\gamma_{2}}\na^{\gamma_{2}}\bar\rho\|_{L^\infty}\cdots\|(1+|x|)^{\gamma_{1+j}}\na^{\gamma_{1+j}}\bar\rho\|_{L^\infty}\|(1+|x|)^{N-l}\na^{N-l}\bar\rho\|_{L^\infty}\\
		&\quad+C\sum_{\substack{\gamma_1+\cdots+\gamma_{i+j}=l\\\gamma_1+\cdots+\gamma_{i}=m\\1\leq m\leq l,i\geq2}}\|\f{\na^{\gamma_1}n}{(1+|x|)^{N-m}}\|_{L^2}\|\na^{\gamma_2}n\|_{L^\infty}
\cdots\|\na^{\gamma_i}n\|_{L^\infty}
\|(1+|x|)^{\gamma_{i+1}}\na^{\gamma_{i+1}}\bar\rho\|_{L^\infty}\\
&\quad  \quad  \quad  \quad  \quad  \quad \quad \quad \quad \cdots\|(1+|x|)^{\gamma_{i+j}}\na^{\gamma_{i+j}}\bar\rho\|_{L^\infty}\|(1+|x|)^{N-l}\na^{N-l}\bar\rho\|_{L^\infty}\\
		\overset{def}{=}&M_{441}+M_{442}.
	\end{split}
	\eeno
In view of Hardy inequality, we can obtain the estimate
\beno
\begin{split}
M_{441}
	\leq& C\delta\|\na^N n\|_{L^2}.
\end{split}
\eeno
To deal with the term $M_{442}$, we can apply Hardy inequality and the decay estimate \eqref{n1h1} to obtain
		\beno
		\begin{split}
			M_{442}
			\leq& C\delta\sum_{\substack{\gamma_1+\cdots+\gamma_{i+j}=l\\\gamma_1+\cdots+\gamma_{i}=m\\0\leq m\leq l,~i\geq2}}\|\na^{N-m+\gamma_1}n\|_{L^2}\|\na^{\gamma_2+1}n\|_{H^1}\cdots\|\na^{\gamma_i+1}n\|_{H^1}
			\leq C(1+t)^{-\f{4+N}{2}}.
		\end{split}
		\eeno
The combination of estimates from term $M_{441}$ to $M_{442}$ yields directly
\beno
\sum_{1\leq l\leq N-1}\|\na^l\h(n,\bar\rho)\na^{N-l}\bar\rho\|_{L^2}
\leq C\delta\|\na^{N}n\|_{L^2}+C(1+t)^{-\f{4+N}{2}},
\eeno
which, together with the inequality \eqref{m44}, yields directly
	\beq\label{M44}
	\begin{split}
		M_{44}
		\leq(\ep+C_{\ep}\delta)\|\na^{N}n\|_{L^2}^2+C_{\ep}(1+t)^{-4-N}.
	\end{split}
	\eeq
Consequently, by virtue of the estimates \eqref{m4}, \eqref{M41},
\eqref{M42}, \eqref{M43} and \eqref{M44}, it holds true
\beq\label{M4}
M_4 \leq (\ep+C_{\ep}\delta) \|\na^N n\|_{L^2}^2
    +C_{\ep}\delta \|\na^{N+1}v\|_{L^2}^2+C_{\ep}(1+t)^{-3-N}.
\eeq
Substituting the estimates \eqref{N1}-\eqref{N3} and \eqref{M4} into \eqref{f2}, we find
\beno
\begin{split}
&-\f {d}{dt}\int_{|\xi|\geq\eta}\widehat{\na^{N}v}\cdot \overline{\widehat{\na^{N-1}n}}d\xi+\gamma\int_{|\xi|\geq\eta}|\widehat{\na^Nn}|^2d\xi\\
&\leq(\ep+C_{\ep}\delta)\|\na^N n\|_{L^2}^2+C_{\ep}\|(\na^N v,\na^{N+1}v)\|_{L^2}^2+C_{\ep}(1+t)^{-3-N}.
	\end{split}
	\eeno
Due to the definition \eqref{def-h-l}, there exists a positive constant $C$ such that
	\beq\label{vhvl}
	\begin{split}
		\|\na^N v^h\|_{L^2}^2\leq C\|\na^{N+1}v^h\|_{L^2}^2,\quad \|\na^{N+1} v^l\|_{L^2}^2\leq C \|\na^{N}v^l\|_{L^2}^2,
	\end{split}
	\eeq
	and choosing $\ep$ and $\delta$ suitably small, we deduce that
	\beq\label{en5}
	\begin{split}
		&-\f{d}{dt}\int_{|\xi|\geq\eta}\widehat{\na^{N}v}\cdot \overline{\widehat{\na^{N-1}n}}d\xi+\gamma\int_{|\xi|\geq\eta}|\widehat{\na^Nn}|^2d\xi
		\leq C\|\na^N (n,v)^l\|_{L^2}^2+C_{3} \|\na^{N+1}v^h\|_{L^2}^2+C(1+t)^{-3-N}.
	\end{split}
	\eeq
Recalling the estimate \eqref{en2} in Lemma \ref{enn}, one has the following estimate
\beq\label{en4}
\f{d}{dt}\|\na^{N}(n,v)\|_{L^2}^2+\|\na^{N+1}v\|_{L^2}^2\leq C\delta \|(\na^{N}n,\na^{N+1}v)\|_{L^2}^2.
\eeq
Multiplying \eqref{en5} by $\eta_3$, then adding to \eqref{en4}, and choosing $\delta$ and $\eta_3$ suitably small, then we have
\begin{equation*}
\begin{aligned}		&\f{d}{dt}\Big\{\|\na^{N}(n,v)\|_{L^2}^2-\eta_3\int_{|\xi|\geq\eta}\widehat{\na^{N-1}v}\cdot \overline{\widehat{\na^{N}n}}d\xi \Big\}
+\|\na^{N+1}v\|_{L^2}^2+\eta_3\|\na^{N}n^h\|_{L^2}^2\\
\leq &C_4\|\na^{N}(n^l,v^l)\|_{L^2}^2+C(1+t)^{-3-N}.
\end{aligned}
\end{equation*}
Using \eqref{vhvl} once again, we obtain that
\begin{equation*}
\begin{aligned}	
&\f{d}{dt}\Big\{\|\na^{N}(n,v)\|_{L^2}^2
  -\eta_3\int_{|\xi|\geq\eta}\widehat{\na^{N-1}v}\cdot
  \overline{\widehat{\na^{N}n}}d\xi \Big\}
  +\|\na^{N}v^h\|_{L^2}^2+\eta_3\|\na^{N}n^h\|_{L^2}^2\\
\leq & C_4\|\na^{N}(n^l, v^l)\|_{L^2}^2+C(1+t)^{-3-N}.
\end{aligned}
\end{equation*}
Therefore, we complete the proof of this lemma.
\end{proof}

    Observe the right handside of the estimate \eqref{en6} in Lemma \ref{highfrequency}, we need to estimate the low frequency of $\na^N(n,v)$.
    For this purpose, we need to analyze the initial value problem for linearized system of \eqref{ns5}:
    \beq\label{linear}
    \left\{\begin{array}{lr}
	\widetilde n_t +\gamma\dive\widetilde v=0,\quad (t,x)\in \mathbb{R}^{+}\times \mathbb{R}^3,\\
	\widetilde u_t+\gamma\na \widetilde n-\mu_1\tri \widetilde v-\mu_2\na\dive \widetilde v =0,\quad (t,x)\in \mathbb{R}^{+}\times \mathbb{R}^3,\\
	(\widetilde n,\widetilde v)|_{t=0}=(n_0,v_0),\quad x\in \mathbb{R}^3.
    \end{array}\right.
    \eeq
    In terms of the semigroup theory for evolutionary equations, the solution $(\widetilde n,\widetilde v)$ of the linearized system \eqref{linear} can be expressed as
    \beq\label{U}
    \left\{\begin{array}{lr}
    \widetilde U_t=A\widetilde U,\quad t\geq0,\\
    \widetilde U(0)=U_0,
\end{array}\right.\eeq
    where $\widetilde U \overset{def}{=}(\widetilde{n},\widetilde{v})^t$,
    $U_0 \overset{def}=(n_0,v_0)$
    and the matrix-valued differential operator $A$ is given by
    \beno
    A = {\left(
	\begin{matrix}
		0 & -\gamma \dive \\
		-\gamma \na & \mu_1\tri+\mu_2\na\dive
	\end{matrix}
	\right).}
    \eeno
    Denote $S(t)\overset{def} =e^{tA}$, then the system \eqref{U} gives rise to
    \beq\label{uexpress}\widetilde U(t)=S(t)U_0=e^{tA} U_0,\quad t\geq0. \eeq
    Then, it is easy to check that the following estimate holds
    \beq\label{linearlow}
    \|\na^N(S(t)U_0)\|_{L^2}\leq C(1+t)^{-\f34-\f N2}\|U_0\|_{L^1\cap H^N},
    \eeq
    where $C$ is a positive constant independent of time.
    The estimate \eqref{linearlow} can be found in \cite{chen2021,duan2007}.
    Finally, let us denote $F(t)=(\widetilde{S_1}(t),\widetilde{S_2}(t))^{tr}$, then
    the system \eqref{ns5} can be rewritten as follows:
    \beq\label{nonlinear}
    \left\{\begin{array}{lr}
    	U_t=A U+F,\\
    	U(0)=U_0.
    \end{array}\right.
    \eeq
    Then we can use Duhamel's principle to represent the solution of system \eqref{ns5} in term of the semigroup
    \beq\label{Uexpress}
    U(t)=S(t)U_0+\int_0^tS(t-\tau)F(\tau)d\tau.
    \eeq
    Now, one can establish the estimate for the low frequency of $\na^N(n,v)$ as follows:
\begin{lemm}\label{lowfrequency}
Under the assumption of Theorem \ref{them3}, we have
\beq\label{lowfre}
\|\na^N(n^l, v^l)(t)\|_{L^2}
\leq C \delta \sup_{0\leq s\leq t}\|\na^N(n,v)(s)\|_{L^2}+C(1+t)^{-\f34-\f N2},
\eeq
where $C$ is a positive constant independent of time.
\end{lemm}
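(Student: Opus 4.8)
The plan is to combine Duhamel's principle for the perturbed system \eqref{ns5} with the low-frequency decay of the linear semigroup. Writing the solution as in \eqref{Uexpress}, $U(t)=S(t)U_0+\int_0^t S(t-\tau)F(\tau)\,d\tau$ with $F=(\widetilde S_1,\widetilde S_2)^{tr}$, I apply $\na^N$ and the low-frequency projection $(\cdot)^l$ of \eqref{def-h-l}. For the linear part, since $(\cdot)^l$ keeps only $|\xi|\le\eta$ one has $\|\na^N(S(t)U_0)^l\|_{L^2}\le\|\na^N S(t)U_0\|_{L^2}\le C(1+t)^{-\f34-\f N2}\|U_0\|_{L^1\cap H^N}\le C(1+t)^{-\f34-\f N2}$ directly from \eqref{linearlow} and the smallness of the initial data, which already accounts for the second term in \eqref{lowfre}.

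For the Duhamel integral I split $\int_0^t=\int_0^{t/2}+\int_{t/2}^t$. On $[0,t/2]$ I use the low-frequency form of the linear estimate --- which, because $|\xi|^N\varphi_0(\xi)$ is bounded, requires only the $L^1$ norm of the source and not its $H^N$ norm --- namely $\|\na^N(S(t-\tau)g)^l\|_{L^2}\le C(1+t-\tau)^{-\f34-\f N2}\|g\|_{L^1}$, gaining an extra half-power of $(1+t-\tau)$ whenever $g$ is in divergence form (true for $\widetilde S_1=-\f{\mu_1\gamma}{\mu}\dive[(n+\bar\rho)v]$, for $v\cdot\na v$, and for the viscous part of $\widetilde S_2$ after one integration by parts). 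The $L^1$ norms of the quadratic pieces of $F$ are bounded by products of the decay rates of Lemma \ref{N-1decay} (e.g.\ $\|(n+\bar\rho)v\|_{L^1}\le(\|n\|_{L^2}+\|\bar\rho\|_{L^2})\|v\|_{L^2}$, $\|v\cdot\na v\|_{L^1}\le\|v\|_{L^2}\|\na v\|_{L^2}$), while the pieces carrying the stationary density $\bar\rho$ (such as $\dive(\bar\rho v)$ and $\h(n,\bar\rho)\na\bar\rho$) are handled via the smallness $\|(1+|x|)^k\na^k\bar\rho\|_{L^2\cap L^\infty}\le\delta$ from \eqref{density-control} together with the Hardy inequality of Lemma \ref{hardy} to trade weights on $\bar\rho$ for derivatives on $v$ or $n$. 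The resulting time integrals $\int_0^{t/2}(1+t-\tau)^{-r_1}(1+\tau)^{-r_2}\,d\tau$ are then evaluated with Lemma \ref{tt2}, yielding contributions $\lesssim(1+t)^{-\f34-\f N2}$.

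On $[t/2,t]$ the factor $1+t-\tau$ is no longer large, so instead I use that $S$ is bounded on $L^2$ and $|\xi|^N\varphi_0(\xi)\le\eta^N$, which gives $\|\na^N(S(t-\tau)F(\tau))^l\|_{L^2}\le C\min\{1,(t-\tau)^{-N/2}\}\|F(\tau)\|_{L^2}$ (again with an extra half-power when the divergence structure is retained), and then estimate $\|F(\tau)\|_{L^2}$ by the same mechanism, now using $\|n\|_{L^\infty},\|\na n\|_{L^\infty}\le C(1+\tau)^{-\f54}$ and the bounds on $\|\na^j v\|_{L^2}$ from Lemma \ref{N-1decay} together with Hardy for the $\bar\rho$ pieces; since $\tau\ge t/2$ these decaying factors are pulled out as powers of $1+t$. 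The pieces proportional to $\bar\rho$, which are only small (not time-decaying) and do not quite reach the rate $(1+t)^{-\f34-\f N2}$, are kept in the form $C\delta\sup_{0\le s\le t}\|\na^N(n,v)(s)\|_{L^2}$; this is admissible because in the subsequent argument (combined with Lemma \ref{highfrequency} and the smallness of $\delta$) that quantity is absorbed.

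The main obstacle I anticipate is the bookkeeping of the source $F$: since $\widetilde S_1,\widetilde S_2$ carry one or two derivatives of $v$, one has $F\notin H^N$, so \eqref{linearlow} cannot be invoked with its $H^N$ norm, and it is precisely the low-frequency localization --- where $|\xi|^N$ is bounded --- that makes the argument go through. Within this, the genuinely delicate terms are those involving the non-constant stationary density $\bar\rho=\rho^*-\rho_\infty$: there one must exploit the weighted smallness \eqref{phik}--\eqref{density-control}, the Hardy inequality, and the divergence form of $\widetilde S_1$ both to extract extra decay and to reduce to $\na^N(n,v)$; the remaining, purely quadratic contributions are routine once the bounds of Lemma \ref{N-1decay} and Lemma \ref{tt2} are in hand.
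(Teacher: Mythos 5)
Your setup --- Duhamel's formula, the low--frequency projection, the bound $\|\na^N(S(t)U_0)^l\|_{L^2}\le C(1+t)^{-\f34-\f N2}$, and the treatment of $\int_0^{t/2}$ via $\|\na^N(S(t-\tau)g)^l\|_{L^2}\le C(1+t-\tau)^{-\f34-\f N2}\|g\|_{L^1}$ combined with $\|F(\tau)\|_{L^1}\le C\delta(1+\tau)^{-\f54}$ and Lemma \ref{tt2} --- coincides with the paper's argument and is correct. The gap is in the portion $\int_{t/2}^t$. There you place all $N$ derivatives on the semigroup and measure the source in $L^2$, arriving at $\int_{t/2}^t\min\{1,(t-\tau)^{-N/2}\}\|F(\tau)\|_{L^2}\,d\tau$. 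But $\|F(\tau)\|_{L^2}$ decays no faster than $C\delta(1+\tau)^{-\f54}$ (for instance $\|\bar\rho\,\na v\|_{L^2}\le\delta\|\na v\|_{L^2}\sim\delta(1+\tau)^{-\f54}$, and $\|\na v\|_{L^2}$ is \emph{not} controlled by $\|\na^N v\|_{L^2}$: on $\R^3$ lower--order $L^2$ norms are not bounded by higher--order ones, so these pieces cannot be ``kept in the form $C\delta\sup\|\na^N(n,v)\|_{L^2}$'' as you propose). Since the kernel only integrates to a constant, this part of your argument yields a contribution of size $C\delta(1+t)^{-\f54}$, which is neither bounded by $C(1+t)^{-\f34-\f N2}$ nor absorbable into $C\delta\sup_{0\le s\le t}\|\na^N(n,v)(s)\|_{L^2}$; fed into Lemma \ref{optimaln}, it would only return the non--optimal rate $(1+t)^{-\f54}$ for $\na^N(n,v)$.

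The missing idea is the derivative splitting the paper performs in \eqref{nvl}: on $[t/2,t]$ write $|\xi|^N=|\xi|\cdot|\xi|^{N-1}$, keep a single factor $|\xi|$ on the semigroup --- so the kernel is $\||\xi|\widehat{S}(t-\tau)\|_{L^2(|\xi|\le\eta)}\le C(1+t-\tau)^{-\f54}$, still integrable over $[t/2,t]$ --- and load $|\xi|^{N-1}$ onto $\widehat F$, estimating $\||\xi|^{N-1}\widehat F(\tau)\|_{L^\infty(|\xi|\le\eta)}$ by $L^1$ norms of $\na^{N-1}\widetilde S_1$ and $\na^{N-1}\widetilde S_2$. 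It is only at this derivative level that the source is controlled by $C\delta\|\na^N(n,v)\|_{L^2}+C(1+\tau)^{-1-\f N2}$ (via Hardy's inequality for the $\bar\rho$-terms and Lemma \ref{N-1decay} for the quadratic ones), which is precisely what produces the two terms on the right of \eqref{lowfre}. Without redistributing the derivatives in this way, the near-time part of the Duhamel integral cannot reach the optimal rate, so as written the proposal does not prove the lemma.
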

\begin{proof}
	It follows from the formula \eqref{Uexpress} that
    \beno
	\na^N(n, v)=\na^N(S (t)U_0)+\int_0^t\na^{N}[S(t-\tau)F(\tau)]d\tau,
	\eeno
	which yields directly
	\beq\label{nvexpress}
	\|\na^N(n^l, v^l)\|_{L^2}
    \leq \|\na^N(S (t)U_0)^l\|_{L^2}+\int_0^t\|\na^{N}[S(t-\tau)F(\tau)]^l\|_{L^2}d\tau.
	\eeq
	Since the initial data $U_0=(n_0,v_0)\in L^1\cap H^N$, it follows from the
    estimate \eqref{linearlow} that
    \beq\label{U0es}
	\|\na^N(S (t)U_0)^l\|_{L^2}\leq C(1+t)^{-\f34-\f N2}\|U_0\|_{L^1\cap H^N}.
	\eeq
	Then we compute by means of Sobolev inequality that\beq\label{nvl}
	\begin{split}
		&\int_0^t\|\na^{N}[S(t-\tau)F(\tau)]^l\|_{L^2}d\tau
		\leq\int_0^t\||\xi|^{N}|\widehat{S}(t-\tau)||\widehat F(\tau)|\|_{L^2(|\xi|\leq \eta)}d\tau\\
		\leq&\int_0^{\f t2}\||\xi|^{N}|\widehat{S}(t-\tau)|\|_{L^2(|\xi|\leq \eta)}\|\widehat F(\tau)\|_{L^\infty(|\xi|\leq \eta)}d\tau+\int_{\f t2}^t\||\xi||\widehat{S}(t-\tau)|\|_{L^2(|\xi|\leq \eta)}\||\xi|^{N-1}\widehat F(\tau)\|_{L^\infty(|\xi|\leq \eta)}d\tau\\
		\leq&\int_0^{\f t2}(1+t-\tau)^{-\f34-\f N2}\|\widehat F(\tau)\|_{L^\infty(|\xi|\leq \eta)}d\tau+\int_{\f t2}^t(1+t-\tau)^{-\f54}\||\xi|^{N-1}\widehat F(\tau)\|_{L^\infty(|\xi|\leq \eta)}d\tau\\
		\overset{def}{=}&N_1+N_2.
	\end{split}
	\eeq
	Now we estimate the first term on the right handside of \eqref{nvl} as follows:
\beq\label{ss1N}
\begin{split}
N_1=
\int_0^{\f t2}(1+t-\tau)^{-\f34-\f N2}\|F\|_{L^1}d\tau
\leq C\int_0^{\f t2}(1+t-\tau)^{-\f34-\f N2} \big(\|\widetilde{S}_1\|_{L^1}+\|\widetilde{S}_2\|_{L^1}\big)d\tau.
\end{split}
\eeq
In view of the definitions of $\widetilde{S}_i(i=1,2)$
and decay estimate \eqref{n1h1}, we have
\beq\label{SS1}
\|\widetilde{S}_1\|_{L^1}\leq C\|\na (n,v)\|_{L^2}\|(n,v)\|_{L^2}+C\|(1+|x|)\na \bar\rho\|_{L^2}\|\f{v}{1+|x|}\|_{L^2}+C\|\bar\rho\|_{L^2}\|\na v\|_{L^2}\leq C\delta (1+t)^{-\f54},
\eeq
and
\beq\label{SS2}
\begin{split}
		\|\widetilde{S}_2\|_{L^1}\leq& C\|v\|_{L^2}\|\na v\|_{L^2}+C\|\wf(n+\bar\rho)\|_{L^2}\|\na^2 v\|_{L^2}+C\|\wg(n+\bar\rho)\|_{L^2}\|\na n\|_{L^2}+C\|\h(n,\bar\rho)\|_{L^2}\|\na \bar\rho\|_{L^2}\\
		\leq&C\|v\|_{L^2}\|\na v\|_{L^2}+C\|n\|_{L^2}\|\na^2 v\|_{L^2}+C\|\bar\rho\|_{L^2}\|\na^2 v\|_{L^2}+C\|n\|_{L^2}\|\na n\|_{L^2}\\
		&\quad+C\|\bar\rho\|_{L^2}\|\na n\|_{L^2}+C\|\f{n}{1+|x|}\|_{L^2}\|(1+|x|)\na \bar\rho\|_{L^2}\\
		\leq&C\delta (1+t)^{-\f54}.
	\end{split}
	\eeq
Substituting the estimates \eqref{SS1} and \eqref{SS2} into \eqref{ss1N},
and using the estimate in Lemma \ref{tt2}, it holds
\beq\label{F1}
	\begin{split}
		N_1\leq C \int_0^{\f t2}(1+t-\tau)^{-\f34-\f N2}(1+\tau)^{-\f54}d\tau\leq C (1+t)^{-\f34-\f N2}.
\end{split}
\eeq
Next, let us deal with the $N_2$ term.
For any smooth function $\phi$, there exists a positive constant $C$
dependent only on $\eta$, such that
$$\||\xi|^{N-1}\widehat \phi\|_{L^\infty(|\xi|\leq \eta)}\leq C\||\xi|^{N-2}\widehat \phi\|_{L^\infty(|\xi|\leq \eta)},$$
	then we find that
\beq\label{L}
	\begin{split}
		\||\xi|^{N-1}\widehat F\|_{L^\infty(|\xi|\leq \eta)}
		\leq& C \|[\na^{N-1}\widetilde {S}_1]^l\|_{L^1}+C\|[\na^{N-1}\widetilde {S}_2]^l\|_{L^1}\\
		\leq&C\|[\na^{N-1}(\na nv+n\na v)]^l\|_{L^1}+C\|[\na^{N-1}(\na \bar\rho v+\bar\rho\na v)]^l\|_{L^1}+C\|[\na^{N-1}(v\na v)]^l\|_{L^1}\\
		&\quad+C\|[\na^{N-2}\big(\wf(n+\bar\rho)(\mu_1\tri v+\mu_2\na \dive v)\big)]^l\|_{L^1}+C\|[\na^{N-1}(\wg(n+\bar\rho)\na n)]^l\|_{L^1}\\
		&\quad+C\|[\na^{N-1}(\h(n,\bar\rho)\na \bar\rho)]^l\|_{L^1}\\
        \overset{def}{=}&N_{21}+N_{22}+N_{23}+N_{24}+N_{25}+N_{26}.
	\end{split}
	\eeq
	First of all, applying the decay estimate \eqref{n1h1}, then the term $N_{21}$ can be estimated as follows
    \beq\label{L1}
	\begin{split}
		N_{21}\leq C\sum_{0\leq l\leq N-1}\big(\|\na^{l+1}n\|_{L^2}\|\na^{N-l-1}v\|_{L^2}+\|\na^{l}n\|_{L^2}\|\na^{N-l}v\|_{L^2}\big)
		\leq C (1+t)^{-1-\f N2}.
	\end{split}
	\eeq
	By virtue of Hardy inequality, we have\beq\label{L222}
	\begin{split}
		N_{22}\leq&C\sum_{0\leq l\leq N-1}\Big(\|(1+|x|)^{l+1}\na^{l+1}\bar\rho\|_{L^2}\|\f{\na^{N-l-1}v}{(1+|x|)^{l+1}}\|_{L^2}+\|(1+|x|)^{l}\na^{l}\bar\rho\|_{L^2}\|\f{\na^{N-l}v}{(1+|x|)^{l}}\|_{L^2} \Big)\\
		\leq&C \delta\|\na^N v\|_{L^2}.
	\end{split}
	\eeq
	In view of the decay estimate \eqref{n1h1}, it follows directly
	\beq\label{LL3}
	\begin{split}
		N_{23}\leq C\sum_{0\leq l\leq N-1}\|\na^{l}v\|_{L^2}\|\na^{N-l}v\|_{L^2}
		\leq C (1+t)^{-1-\f N2}.
	\end{split}
	\eeq
	Applying the estimate \eqref{widef} of function $\wf$, we deduce that\beq\label{L4}
	\begin{split}
		N_{24}
		\leq&C\int|\wf(n+\bar\rho)||\na^{N}v|dx+C\sum_{1\leq l\leq N-2}\sum_{\gamma_1+\cdots+\gamma_{j}=l}\int|\na^{\gamma_{1}}\bar\rho|\cdots|\na^{\gamma_{j}}\bar\rho||{\na^{N-l}v}|dx\\
		&\quad+C\sum_{1\leq l\leq N-2}\sum_{\substack{\gamma_1+\cdots+\gamma_{i+j}=l\\\gamma_1+\cdots+\gamma_{i}=m\\1\leq m\leq l}}\int|\na^{\gamma_1}n|\cdots|\na^{\gamma_{i}}n||\na^{\gamma_{i+1}}\bar\rho|\cdots|\na^{\gamma_{i+j}}\bar\rho||{\na^{N-l}v}|dx\\
		\overset{def}{=}&N_{241}+N_{242}+N_{243}.
	\end{split}
    \eeq
	It follows from H{\"o}lder inequality that\beno
	\begin{split}
		N_{241}\leq&C\|(n+\bar\rho)\|_{L^2}\|\na^N v\|_{L^2}\leq C\delta\|\na^N v\|_{L^2}.
	\end{split}
    \eeno
	By Hardy inequality, it is easy to deduce\beno
	\begin{split}
		N_{242}\leq& C\sum_{1\leq l\leq N-2}\sum_{\gamma_1+\cdots+\gamma_{j}=l}\|(1+|x|)^{\gamma_{1}}\na^{\gamma_{1}}\bar\rho\|_{L^2}\|(1+|x|)^{\gamma_{2}}\na^{\gamma_{2}}\bar\rho\|_{L^\infty}\cdots\|(1+|x|)^{\gamma_{j}}\na^{\gamma_{j}}\bar\rho\|_{L^\infty}\|\f{\na^{N-l}v}{(1+|x|)^{l}}\|_{L^2}\\
		\leq& C\delta\|\na^N v\|_{L^2}.
	\end{split}
	\eeno
	Without loss of generality, we assume that $1\leq \gamma_1\leq\cdots\leq \gamma_{i}\leq N-2$. The fact $i\geq2$ implies $\gamma_{i-1}\leq N-3\leq N-2$. Thus, we can exploit Hardy inequality, Sobolev interpolation inequality \eqref{Sobolev} in Lemma \ref{inter} and the decay estimate \eqref{n1h1} to obtain\beno\begin{split}
		N_{243}\leq&C\sum_{1\leq l\leq N-2}\sum_{\substack{\gamma_1+\cdots+\gamma_{i+j}=l\\\gamma_1+\cdots+\gamma_{i}=m\\1\leq m\leq l}}\|\na^{\gamma_1}n\|_{L^\infty}\cdots\|\na^{\gamma_{i-1}}n\|_{L^\infty}\|\na^{\gamma_{i}}n\|_{L^2}\|(1+|x|)^{\gamma_{i+1}}\na^{\gamma_{i+1}}\bar\rho\|_{L^\infty}\\
		&\quad\cdots\|(1+|x|)^{\gamma_{i+j}}\na^{\gamma_{i+j}}\bar\rho\|_{L^\infty}\|\f{\na^{N-l}v}{(1+|x|)^{l-m}}\|_{L^2}\\
		\leq&C\delta\sum_{1\leq l\leq N-2}\sum_{\substack{\gamma_1+\cdots+\gamma_{i+j}=l\\\gamma_1+\cdots+\gamma_{i}=m\\1\leq m\leq l}}\|\na^{\gamma_1+1}n\|_{H^1}\cdots\|\na^{\gamma_{i-1}+1}n\|_{H^1}\|\na^{\gamma_i}n\|_{L^2}\|\na^{N-m}v\|_{L^2}\\
		\leq&C\delta\sum_{1\leq l\leq N-2}\sum_{\substack{\gamma_1+\cdots+\gamma_{i+j}=l\\\gamma_1+\cdots+\gamma_{i}=m\\1\leq m\leq l}}(1+t)^{-\f N2-\f{5i}{4}-\f74}
		\leq C (1+t)^{-\f32-\f N2}.
	\end{split}
	\eeno
	Substituting the estimates of $N_{241}$, $N_{242}$ and $N_{243}$ into \eqref{L4}, we arrive at
	\beq\label{L24}
	N_{24}\leq C (1+t)^{-\f32-\f N2}+C\delta\|\na^N v\|_{L^2}.
	\eeq
	One can deal with the term $N_{25}$ in the same manner of $N_{24}$.
    Then, it is easy to check that
    \beq\label{L5}
	N_{25}\leq C(1+t)^{-\f32-\f N2}+C\delta \|\na^N n\|_{L^2}.
	\eeq
	Finally, let us deal with $N_{26}$. By virtue of the estimate \eqref{h} and \eqref{h2} of $\h$, then we have\beq\label{l6}\begin{split}
		N_{26}\leq& C\int|n||\na^N \bar\rho|dx+ C\sum_{1\leq l\leq N-1}\sum_{\gamma_1+\cdots+\gamma_{j}=l}\int|n||\na^{\gamma_{1}}\bar\rho|\cdots|\na^{\gamma_{j}}\bar\rho||\na^{N-l}\bar\rho|dx\\
		&\quad+C\sum_{1\leq l\leq N-1}\sum_{\substack{\gamma_1+\cdots+\gamma_{i+j}=l\\\gamma_1+\cdots+\gamma_{i}=m\\1\leq m\leq l}}\int|\na^{\gamma_1}n|\cdots|{\na^{\gamma_i}n}||\na^{\gamma_{i+1}}\bar\rho|\cdots|\na^{\gamma_{i+j}}\bar\rho||\na^{N-l} \bar\rho|dx\\
		\overset{def}{=}& N_{261}+N_{262}+N_{263}.
	\end{split}
	\eeq
	It follows from Hardy inequality that\beq\label{N261-2}
	\begin{split}
		N_{261}+N_{262}\leq& C\|\f{n}{(1+|x|)^N}\|_{L^2}\|(1+|x|)^N\na^N \bar\rho\|_{L^2}+C\sum_{1\leq l\leq N-1}\sum_{\gamma_1+\cdots+\gamma_{j}=l}\|\f{n}{(1+|x|)^{N}}\|_{L^2}\\
		&\quad\times\|(1+|x|)^{\gamma_{1}}\na^{\gamma_{1}}\bar\rho\|_{L^\infty}\cdots\|(1+|x|)^{\gamma_{j}}\na^{\gamma_{j}}\bar\rho\|_{L^\infty}\|(1+|x|)^{N-l}\na^{N-l} \bar\rho\|_{L^2}\\
		\leq& C\delta\|\na^Nn\|_{L^2}.
	\end{split}
	\eeq
	For the term $N_{263}$, it is easy to check that\beq\label{N263}
	\begin{split}
	N_{263}\leq&C\sum_{1\leq l\leq N-1}\sum_{\substack{\gamma_1+\cdots+\gamma_{i+j}=l\\\gamma_1=m\\1\leq m\leq l}}\|\f{\na^{\gamma_1}n}{(1+|x|)^{N-\gamma_1}}\|_{L^2}\|(1+|x|)^{\gamma_{2}}\na^{\gamma_{2}}\bar\rho\|_{L^\infty}\cdots\|(1+|x|)^{\gamma_{1+j}}\na^{\gamma_{1+j}}\bar\rho\|_{L^\infty}\\
	&\quad\|(1+|x|)^{N-l}\na^{N-l} \bar\rho\|_{L^2}+C\sum_{1\leq l\leq N-1}\sum_{\substack{\gamma_1+\cdots+\gamma_{i+j}=l\\\gamma_1+\cdots+\gamma_{i}=m\\1\leq m\leq l,i\geq2}}\|\na^{\gamma_1}n\|_{L^\infty}\cdots\|\na^{\gamma_{i-1}}n\|_{L^\infty}\|\f{\na^{\gamma_i}n}{(1+|x|)^{N-m}}\|_{L^2}\\
	&\quad\times\|(1+|x|)^{\gamma_{i+1}}\na^{\gamma_{i+1}}\bar\rho\|_{L^\infty}\cdots\|(1+|x|)^{\gamma_{i+j}}\na^{\gamma_{i+j}}\bar\rho\|_{L^\infty}\|(1+|x|)^{N-l}\na^{N-l} \bar\rho\|_{L^2}\\
	\overset{def}{=}&N_{2631}+N_{2632}.
	\end{split}
	\eeq
	We employ Hardy inequality once again, to discover
	\beno
	\begin{split}
		N_{2631}
		\leq& C\d\|\na^Nn\|_{L^2}.
	\end{split}
	\eeno
	Without loss of generality, we assume that $1\leq \gamma_1\leq\cdots\leq \gamma_{i}\leq N-1$. The fact $i\geq2$ implies $\gamma_{i}\leq m-1\leq N-2$. For the term $N_{2632}$, by virtue of Hardy inequality, Sobolev interpolation inequality \ref{Sobolev} in Lemma \ref{inter} and the decay estimate \eqref{n1h1}, we deduce
	\beno
	\begin{split}
		N_{2632}
		\leq&C\delta\sum_{1\leq l\leq N-1}\sum_{\substack{\gamma_1+\cdots+\gamma_{i+j}=l\\\gamma_1+\cdots+\gamma_{i}=m\\1\leq m\leq l,i\geq2}}\|\na^{\gamma_1+1}n\|_{H^1}\cdots\|\na^{\gamma_{i-1}+1}n\|_{H^1}\|\na^{N-m+\gamma_i}n\|_{L^2}\\
		\leq&C\delta\sum_{1\leq l\leq N-1}\sum_{\substack{\gamma_1+\cdots+\gamma_{i+j}=l\\\gamma_1+\cdots+\gamma_{i}=m\\1\leq m\leq l,i\geq2}} (1+t)^{-\f N2+\f12-\f{5i}{4}}
		\leq C (1+t)^{-2-\f N2}.
	\end{split}
	\eeno
	Substituting estimates of terms $N_{2631}$ and $N_{2632}$ into \eqref{N263}, we find\beno
	N_{263}\leq C\d\|\na^Nn\|_{L^2}+C(1+t)^{-2-\f N2}.
	\eeno
	Inserting \eqref{N261-2} and \eqref{N263} into \eqref{l6}, we get immediately\beq\label{L6}
	N_{26}\leq C\delta \|\na^N n\|_{L^2}+C(1+t)^{-2-\f N2}.
	\eeq
	We then conclude from \eqref{L}-\eqref{LL3}, \eqref{L24}, \eqref{L5} and \eqref{L6} that\beno
	\begin{split}
		\||\xi|^{N-1}\widehat F\|_{L^\infty(|\xi|\leq \eta)}\leq&C\delta\|\na^N (n,v)\|_{L^2}+ (1+t)^{-1-\f N2},
	\end{split}
	\eeno
	which, together with the definition of term $N_2$ and
   the estimate in Lemma \ref{tt2}, yields directly
   \beq\label{kF1}
	\begin{split}
		N_2
		\leq&C \int_{\f t2}^t(1+t-\tau)^{-\f54}\big(\delta\|\na^N (n,v)\|_{L^2}+ (1+\tau)^{-1-\f N2}\big)d\tau\\
		\leq&C\delta\sup_{0\leq\tau\leq t}\|\na^N (n,v)\|_{L^2}\int_{\f t2}^t(1+t-\tau)^{-\f54}d\tau+C(1+t)^{-1-\f N2}\\
		\leq&C\delta\sup_{0\leq\tau\leq t}\|\na^N (n,v)\|_{L^2}+C(1+t)^{-1-\f N2}.
	\end{split}\eeq
	Substituting \eqref{F1} and \eqref{kF1} into \eqref{nvl}, it holds true
    \beq\label{Unon}
	\int_0^t\|\na^{N}[S(t-\tau)F(U(\tau))]^l\|_{L^2}d\tau
    \leq C\delta\sup_{0\leq\tau\leq t}\|\na^N (n,v)\|_{L^2}+C(1+t)^{-\f34-\f N2}.
	\eeq
	Inserting \eqref{U0es} and \eqref{Unon} into \eqref{nvexpress}, one obtains immediately that
	\beno
	\|\na^N(n^l,v^l)\|_{L^2}\leq C \delta \sup_{0\leq s\leq t}\|\na^N(n,v)\|_{L^2}+C(1+t)^{-\f34-\f N2}.
	\eeno
	This completes the proof of this lemma.
\end{proof}

Finally, we focus on establishing optimal decay rate for the $N-th$ order spatial derivative of solution.
\begin{lemm}\label{optimaln}
	Under the assumption of Theorem \ref{them3}, we have
	\beq\label{n1h2}
	\|\na^N(n,v)(t)\|_{L^2}\leq C (1+t)^{-\f34-\f N2},
	\eeq
	where $C$ is a positive constant independent of $t$.
\end{lemm}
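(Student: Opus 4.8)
The plan is to turn the high--low frequency splitting of Lemmas \ref{highfrequency} and \ref{lowfrequency} into a closed time-weighted bootstrap. First I would introduce
\[
\mathcal{H}(t):=\|\na^{N}(n,v)\|_{L^2}^2-\eta_3\int_{|\xi|\geq\eta}\widehat{\na^{N-1}v}\cdot \overline{\widehat{\na^{N}n}}\,d\xi ,
\]
which, by Cauchy's inequality and the smallness of $\eta_3$, is equivalent to $\|\na^{N}(n,v)(t)\|_{L^2}^2$. Since $\varphi_0+(1-\varphi_0)\equiv1$ gives $\|\na^{N}(n,v)\|_{L^2}^2\le 2\|\na^{N}(n^l,v^l)\|_{L^2}^2+2\|\na^{N}(n^h,v^h)\|_{L^2}^2$, the dissipative term $\|\na^{N}v^h\|_{L^2}^2+\eta_3\|\na^{N}n^h\|_{L^2}^2$ on the left of \eqref{en6} dominates $c\,\|\na^{N}(n,v)\|_{L^2}^2$ minus a multiple of $\|\na^{N}(n^l,v^l)\|_{L^2}^2$. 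Combining this with \eqref{en6} and the equivalence, I obtain for some constant $c>0$
\[
\frac{d}{dt}\mathcal{H}(t)+c\,\mathcal{H}(t)\le C\,\|\na^{N}(n^l,v^l)(t)\|_{L^2}^2+C(1+t)^{-3-N}.
\]

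Next I would set up the bootstrap with
\[
\mathcal{D}(t):=\sup_{0\le\tau\le t}(1+\tau)^{\f34+\f N2}\|\na^{N}(n,v)(\tau)\|_{L^2},
\]
which is finite for every $t$ by Lemma \ref{N-1decay} (that lemma already yields the cruder bound $\mathcal{D}(t)\le C(1+t)^{1/2}$). The point is that the proof of Lemma \ref{lowfrequency}, when the bound $\|\na^{N}(n,v)(\tau)\|_{L^2}\le \mathcal{D}(t)(1+\tau)^{-\f34-\f N2}$ is used on the interval $[\f t2,t]$ in place of a crude supremum, gives the \emph{time-weighted} estimate
\[
\|\na^{N}(n^l,v^l)(t)\|_{L^2}\le C\big(\delta\,\mathcal{D}(t)+1\big)(1+t)^{-\f34-\f N2},
\]
where the inhomogeneous piece comes from the $L^1$--$L^2$ linear decay \eqref{linearlow} applied to $S(t)U_0$ and to the Duhamel integral, and the factor $\delta$ from the nonlinear source $F=(\widetilde S_1,\widetilde S_2)$ together with the weighted bounds \eqref{density-control} on $\bar\rho$.

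I would then feed this into the differential inequality: integrating with the factor $e^{ct}$, using that $\mathcal{D}$ is nondecreasing and the elementary estimate $\int_0^t e^{-c(t-\tau)}(1+\tau)^{-a}\,d\tau\le C_a(1+t)^{-a}$ for $a>1$ (here $a=\f32+N$ and $a=3+N$), and absorbing $e^{-ct}\mathcal{H}(0)$ into $(1+t)^{-\f32-N}$, I arrive at
\[
\mathcal{H}(t)\le C\big(\delta^2\mathcal{D}(t)^2+1\big)(1+t)^{-\f32-N}.
\]
By the equivalence $\mathcal{H}(t)\simeq\|\na^{N}(n,v)(t)\|_{L^2}^2$, taking square roots, multiplying by $(1+t)^{\f34+\f N2}$ and passing to the supremum over $[0,t]$ gives $\mathcal{D}(t)\le C\delta\,\mathcal{D}(t)+C$; choosing $\delta$ small so that $C\delta\le\f12$ yields $\mathcal{D}(t)\le 2C$ uniformly in $t$, which is exactly \eqref{n1h2}.

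The step I expect to be the main obstacle is the time-weighted low-frequency estimate. One must verify that $\|\na^{N}(n^l,v^l)\|_{L^2}$ truly decays at the sharp rate $(1+t)^{-\f34-\f N2}$ and not merely stays $O(\delta)$: this requires a careful low-frequency analysis of every nonlinear and potential-force term in $\widetilde S_1,\widetilde S_2$ --- in particular the contributions of $\bar\rho$, controlled via the Hardy inequality of Lemma \ref{hardy} and \eqref{density-control} --- and essential use of the already-proved decay of all derivatives of order $\le N-1$ from Lemma \ref{N-1decay}. A secondary delicate point is that the Fourier-localized dissipation in \eqref{en6} only recovers $\|\na^{N}(n,v)\|_{L^2}$ up to the low-frequency remainder, so the bootstrap closes precisely because that remainder re-enters multiplied by the small constant $\delta$.
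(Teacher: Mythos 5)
Your proposal is correct and follows the same overall route as the paper: combine the high-frequency energy inequality \eqref{en6} with the low-frequency Duhamel bound of Lemma \ref{lowfrequency}, convert to a Gronwall-type inequality for the equivalent energy $\widetilde{\mathcal E}^N(t)$, and close by smallness of $\delta$. The one genuine difference is in the bookkeeping of the bootstrap, and it is to your credit. The paper works with the unweighted quantity $\sup_{0\leq\tau\leq t}\|\na^N(n,v)(\tau)\|_{L^2}^2$ and ends up asserting an inequality of the form $\sup_{0\leq\tau\leq t}\|\na^N(n,v)(\tau)\|_{L^2}^2\leq C(1+t)^{-\f32-N}$, which cannot hold literally (the left-hand side is bounded below by a fixed constant near $\tau=0$); read strictly, feeding the unweighted Lemma \ref{lowfrequency} into the Gronwall inequality only yields $\|\na^N(n,v)(t)\|_{L^2}\lesssim\delta$ rather than decay. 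Your weighted sup $\mathcal{D}(t)=\sup_{0\le\tau\le t}(1+\tau)^{\f34+\f N2}\|\na^N(n,v)(\tau)\|_{L^2}$, together with the upgraded low-frequency estimate $\|\na^N(n^l,v^l)(t)\|_{L^2}\le C(\delta\mathcal D(t)+1)(1+t)^{-\f34-\f N2}$, is exactly the device that makes the absorption rigorous; and your justification of that upgrade is sound, since in the proof of Lemma \ref{lowfrequency} the only place the supremum enters is the term $N_2=\int_{t/2}^{t}(1+t-\tau)^{-\f54}\bigl(\delta\|\na^N(n,v)(\tau)\|_{L^2}+(1+\tau)^{-1-\f N2}\bigr)d\tau$, where inserting $\|\na^N(n,v)(\tau)\|_{L^2}\le\mathcal D(t)(1+\tau)^{-\f34-\f N2}$ and applying Lemma \ref{tt2} gives the weighted bound. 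The a priori finiteness $\mathcal D(t)\le C(1+t)^{1/2}$ from Lemma \ref{N-1decay}, which you note, is also what legitimizes dividing through by $1-C\delta$ at the end. In short, your argument buys a fully rigorous closure of the bootstrap at no extra cost, while the paper's version is the same idea written with a small abuse of notation.
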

\begin{proof}
	We may rewrite the estimate \eqref{en6} in Lemma \ref{highfrequency} as\beq\label{ddt}
	\f{d}{dt}\widetilde{\mathcal E}^N(t)+\|\na^{N}v^h\|_{L^2}^2+\eta_3\|\na^{N}n^h\|_{L^2}^2
	\leq  C_4\|\na^{N}(n,v)^l\|_{L^2}^2+C(1+t)^{-3-N}.
	\eeq
	where the energy $\widetilde{\mathcal E}^N(t)$ is defined by
	\[\widetilde{\mathcal E}^N(t)\overset{def}{=}\|\na^{N}(n,v)\|_{L^2}^2-\eta_3\int_{|\xi|\geq\eta}\widehat{\na^{N-1}v}\cdot \overline{\widehat{\na^{N}n}}d\xi.\]
	With the help of Young inequality, by choosing $\eta_3$ small enough, one obtains the equivalent relation\beq\label{endj}
	c_5\|\na^{N}(n,v)\|_{L^2}^2\leq\widetilde{\mathcal E}^N(t)\leq c_6 \|\na^{N}(n,v)\|_{L^2}^2,
	\eeq
	where the constants $c_5$ and $c_6$ are independent of time.
	Then adding on both sides of \eqref{ddt} by $\|\na^{N}(n^l,v^l)\|_{L^2}^2$
     and applying the estimate \eqref{lowfre} in Lemma \ref{lowfrequency}, we find
     \beno
	\begin{split}
		\f{d}{dt}\widetilde{\mathcal E}^N(t)+\|\na^{N}(n,v)\|_{L^2}^2
		\leq ( C_4+1)\|\na^{N}(n^l,v^l)\|_{L^2}^2+C(1+t)^{-3-N}\leq C\delta \sup_{0\leq \tau\leq t}\|\na^N(n,v)\|_{L^2}^2+C(1+t)^{-\f32-N}.
	\end{split}
	\eeno
   By virtue of the equivalent relation \eqref{endj}, we have\beq\label{en7}
	\begin{split}
			\f{d}{dt}\widetilde{\mathcal E}^N(t)+\widetilde{\mathcal E}^N(t)
		\leq C\delta \sup_{0\leq \tau\leq t}\|\na^N(n,v)\|_{L^2}^2+C(1+t)^{-\f32-N},
	\end{split}
	\eeq
	which, using Gronwall inequality, gives immediately\beq\label{estimateE}
	\begin{split}
		\widetilde{\mathcal E}^N(t)\leq e^{-t} \widetilde{\mathcal E}^N(0)+C\delta\sup_{0\leq \tau\leq t}\|\na^N(n,v)\|_{L^2}^2\int_0^te^{\tau-t}d\tau+C\int_0^te^{\tau-t}(1+\tau)^{-\f32-N}d\tau.
	\end{split}
	\eeq
	It is easy to deduce that
    $$
	\int_0^te^{\tau-t}d\tau\leq C
    $$
    and
    $$\int_0^te^{\tau-t}(1+\tau)^{-\f32-N}d\tau\leq C(1+t)^{-\f32-N}.$$
	From the equivalent relation \eqref{endj} and \eqref{estimateE}, it holds
	\beno
	\begin{split}
		\sup_{0\leq \tau\leq t}\|\na^N(n,v)(\tau)\|_{L^2}^2\leq Ce^{-t}\|\na^N(n_0,v_0)\|_{L^2}^2+C\delta\sup_{0\leq \tau\leq t}\|\na^N(n,v)\|_{L^2}^2+ C(1+t)^{-\f32-N}.
	\end{split}
	\eeno
	Applying the smallness of $\delta$, one arrives at
    \beno
		\sup_{0\leq \tau\leq t}\|\na^N(n,v)(\tau)\|_{L^2}^2\leq C(1+t)^{-\f32-N}.
	\eeno
	Therefore, we cpmlete the proof of this lemma.
\end{proof}

\underline{\noindent\textbf{The Proof of Theorem  \ref{them3}.}}
Combining the estimate \eqref{n1h1} in Lemma \ref{N-1decay} with
estimate \eqref{n1h2} in Lemma \ref{optimaln}, then we can obtain the
decay rate \eqref{kdecay} in Theorem \ref{them3}.
  Consequently, we finish the proof of Theorem \ref{them3}.

\subsection{Lower bound of decay rate}\label{lower}
In this subsection, the content of our analysis is to establish the lower bound of decay rate for the global solution and its spatial derivatives of the initial value problem \eqref{ns5}.
In order to achieve this target, we need to analyze the linearized system \eqref{linear}.
We obtain the following proposition immediately, whose proof is similar to \cite{chen2021} and standard, so we omit here.
\begin{prop}\label{lamma-lower}
Let $U_0=(n_0,v_0)\in L^1(\R^3)\cap H^l(\R^3)$ with $l\geq3$,
assume that $M_n\overset{def}{=} \int_{\R^3}n_0(x) d x$
and $M_v\overset{def}{=}\int_{\R^3}v_0(x) d x$
are at least one nonzero.
Then there exists a positive constant $\widetilde c$ independent of time
such that for any large enough $t$, the global solution $(\widetilde n,\widetilde v)$
of the linearized system \eqref{linear} satisfies
\beq\label{linearnudecay}
\min\{\|\pa_{x}^{k} \widetilde n(t)\|_{L^2(\R^3)},
      \|\pa_{x}^{k} \widetilde v(t)\|_{L^2(\R^3)}\}
\geq \widetilde c(1+t)^{-\f34-\f{k}{2}},\quad\text{for}~~ 0\leq k\leq l.
\eeq
Here $\widetilde c$ is a positive constant depending only on $M_n$ and $M_v$.
\end{prop}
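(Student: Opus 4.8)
The plan is to pass to Fourier variables and extract the sharp low-frequency profile of the solution of \eqref{linear}. First I would take the Fourier transform and split $\widehat{\widetilde v}(t,\xi)$ into the compressible scalar $\psi(t,\xi)\overset{def}{=}\tfrac{\xi}{|\xi|}\cdot\widehat{\widetilde v}(t,\xi)$ and the divergence-free remainder $\widehat{\widetilde v}^{\perp}=\widehat{\widetilde v}-\psi\tfrac{\xi}{|\xi|}$. The divergence-free part decouples and solves $\partial_t\widehat{\widetilde v}^{\perp}=-\mu_1|\xi|^2\widehat{\widetilde v}^{\perp}$, so $\widehat{\widetilde v}^{\perp}(t,\xi)=e^{-\mu_1|\xi|^2t}\widehat{v_0}^{\perp}(\xi)$, while $(\widehat{\widetilde n},\psi)$ solves a $2\times2$ system whose symbol
\[
B(\xi)=\begin{pmatrix} 0 & -\gamma i|\xi| \\ -\gamma i|\xi| & -(\mu_1+\mu_2)|\xi|^2\end{pmatrix}
\]
has eigenvalues $\lambda_{\pm}(\xi)=\pm i\gamma|\xi|-\tfrac{\mu_1+\mu_2}{2}|\xi|^2+O(|\xi|^3)$ for $|\xi|$ small. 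Writing $e^{tB(\xi)}$ explicitly, one obtains, uniformly on a fixed small ball $|\xi|\le\rho_0$,
\[
\widehat{\widetilde n}(t,\xi)=e^{-\frac{\mu_1+\mu_2}{2}|\xi|^2t}\big(\cos(\gamma|\xi|t)\,\widehat{n_0}(\xi)-i\sin(\gamma|\xi|t)\,\psi_0(\xi)\big)+R_n(t,\xi),
\]
and a symmetric identity for $\psi(t,\xi)$ with $\widehat{n_0}$ and $\psi_0$ interchanged, the error $R_n$ being controlled by the $O(|\xi|^3)$ corrections to $\lambda_\pm$ and the $O(|\xi|)$ corrections to the eigenprojections.

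Second, I would restrict the Plancherel identity to a thin shell $A_t=\{\,\rho_1(1+t)^{-1/2}\le|\xi|\le\rho_2(1+t)^{-1/2}\,\}$, with $\rho_1<\rho_2$ chosen so that on $A_t$ the Gaussian factor $e^{-\frac{\mu_1+\mu_2}{2}|\xi|^2t}$ stays bounded below by a positive constant and the phase $\gamma|\xi|t$ sweeps an interval of length at least $4\pi$. Since $U_0\in L^1$, $\widehat{U_0}$ is continuous, so after possibly shrinking $\rho_0$ we have $|\widehat{n_0}(\xi)-M_n|+|\widehat{v_0}(\xi)-M_v|<\varepsilon$ on $A_t$ once $t$ is large; in particular $\psi_0(\xi)$ is within $\varepsilon$ of $\tfrac{\xi}{|\xi|}\cdot M_v$. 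Computing $\int_{A_t}|\xi|^{2k}|\widehat{\widetilde n}(t,\xi)|^2\,d\xi$ in polar coordinates, the cross term $\cos(\gamma|\xi|t)\sin(\gamma|\xi|t)=\tfrac12\sin(2\gamma|\xi|t)$ is of lower order (a radial integration by parts gains a factor $t^{-1}$), while $\cos^2$ and $\sin^2$ average to $\tfrac12$ over the phase window and the angular average of $|\tfrac{\xi}{|\xi|}\cdot M_v|^2$ over $S^2$ equals $\tfrac13|M_v|^2$; together with $|A_t|\sim(1+t)^{-3/2}$ this yields
\[
\int_{A_t}|\xi|^{2k}\,|\widehat{\widetilde n}(t,\xi)|^2\,d\xi\ \ge\ c\big(|M_n|^2+\tfrac13|M_v|^2-C\varepsilon\big)(1+t)^{-\frac32-k}.
\]
The same computation applied to $\psi$ (whose averaged square contributes $\gtrsim|M_n|^2+|M_v|^2$) together with the heat part $\widehat{\widetilde v}^{\perp}$ gives the analogous bound for $\int_{A_t}|\xi|^{2k}|\widehat{\widetilde v}(t,\xi)|^2d\xi$. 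Taking square roots and invoking Plancherel then proves \eqref{linearnudecay}, with $\widetilde c$ depending only on $M_n$ and $M_v$.

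The principal difficulty is the acoustic oscillation: the factors $\cos(\gamma|\xi|t),\sin(\gamma|\xi|t)$ preclude any pointwise lower bound on $|\widehat{\widetilde n}(t,\xi)|$, so one cannot work on a ball and must instead integrate over the shell $|\xi|\sim(1+t)^{-1/2}$ and exploit the exact averages of $\cos^2,\sin^2$ plus the genuine lower-order nature of the cross term — this is exactly why the phase window must have length $\ge4\pi$ and why the radial integration by parts is needed. A secondary point is the case distinction forced by the hypothesis that $M_n,M_v$ are not both zero: when $M_n=0$ the lower bound for $\widetilde n$ survives through the $\sin^2|\psi_0|^2$ term because $M_v\neq0$ makes $\psi_0$ nonzero for a.e. direction, and symmetrically the lower bound for $\widetilde v$ when $M_v=0$ comes from the $\sin^2|\widehat{n_0}|^2$ contribution to $|\psi|^2$; hence the averaged main term is strictly positive in every case, uniformly in $0\le k\le l$. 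Finally one checks that $R_n$, the analogous $R_\psi$, and the deviations $\widehat{U_0}(\xi)-\widehat{U_0}(0)$ contribute only $o(|A_t|)$ to the shell integral, by their uniform smallness on $A_t$ for large $t$, so they are absorbed into the constant $C\varepsilon$ above.
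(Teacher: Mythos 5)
Your argument is correct and is essentially the standard spectral lower-bound argument that the paper itself does not reproduce (it only remarks that the proof is ``similar to \cite{chen2021} and standard''): diagonalize the compressible $2\times 2$ symbol, restrict Plancherel to the shell $|\xi|\sim (1+t)^{-1/2}$, and average out the acoustic factors $\cos^2(\gamma|\xi|t),\ \sin^2(\gamma|\xi|t)$ so that the main term is $\tfrac12\big(|M_n|^2+\tfrac13|M_v|^2\big)$ up to $C\varepsilon$, which is positive under the hypothesis on $(M_n,M_v)$. The only presentational point to tighten is the order of operations in the cross-term estimate: since $\widehat{n_0},\widehat{v_0}$ are merely continuous, you should first replace them by their values at $\xi=0$ (absorbing the deviation into $C\varepsilon$, as you do elsewhere) and only then perform the radial integration by parts on $\sin(2\gamma|\xi|t)$ against the smooth weight $|\xi|^{2k+2}e^{-(\mu_1+\mu_2)|\xi|^2 t}$.
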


Define the difference $(n_{\d},v_{\d})\overset{def}{=}(n-\widetilde n,v-\widetilde v)$,
then the quantity $(n_{\d},v_{\d})$ satisfies the following system:
\beq\label{ns7}
\left\{\begin{array}{lr}
	n_{\d t} +\gamma\dive v_{\d}=\widetilde S_1,\quad (t,x)\in \mathbb{R}^{+}\times \mathbb{R}^3,\\
	v_{\d t}+\gamma\na n_{\d}-\mu_1\tri v_{\d}-\mu_2\na\dive v_{\d} =\widetilde  S_2,\quad (t,x)\in \mathbb{R}^{+}\times \mathbb{R}^3,\\
	(n_\d,v_\d)|_{t=0}=(0,0).
\end{array}\right.
\eeq
By virtue of the formula \eqref{Uexpress}, the solution of system \eqref{ns7}
can be represented as\beno
(n_\d,v_\d)^{t}=\int_0^tS(t-\tau)F(U(\tau))d\tau.
\eeno
Next, we aim to establish the upper bounds of decay rates for the difference $(n_\d,v_\d)$ and its first order spatial derivative.
\begin{lemm}
	Under the assumptions of Theorem \ref{them4}, assume $(n_\d,v_\d)$ be the smooth solution of the initial value problem \eqref{ns7}. Then, it holds on for $t\geq0$,
	\beq\label{nuddecay}
	\|\na^k(n_\d,v_\d)(t)\|_{L^2}\leq \widetilde C\d(1+t)^{-\f34-\f k2},\quad \text{for}~~k=0,1,
	\eeq
	where $\widetilde C$ is a constant independent of time.
\end{lemm}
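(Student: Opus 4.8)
The plan is to estimate the Duhamel integral
$(n_\d,v_\d)^t=\int_0^t S(t-\tau)F(U(\tau))\,d\tau$ directly, splitting the time
interval at $t/2$ and, within each piece, splitting the frequency at $|\xi|=\eta$.
For the low-frequency part I would use the $L^1$-type bound
$\|\na^k (S(t-\tau)F)^l\|_{L^2}\le C(1+t-\tau)^{-3/4-k/2}\|F(\tau)\|_{L^1}$
coming from \eqref{linearlow}, together with the estimates
$\|\widetilde S_1\|_{L^1}+\|\widetilde S_2\|_{L^1}\le C\delta(1+\tau)^{-5/4}$ that were
already derived in \eqref{SS1}--\eqref{SS2}; for $k=1$ I would instead peel off one
derivative onto the semigroup, using
$\|\na(S(t-\tau)\phi)^l\|_{L^2}\le C(1+t-\tau)^{-5/4}\|\phi\|_{L^1}$, and keep $F$ in
$L^1$. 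For the high-frequency part I would use that $S(t-\tau)$ decays exponentially
on $\{|\xi|\ge\eta\}$ (the parabolic smoothing of the linearized operator $A$ on high
frequencies), so that $\|\na^k(S(t-\tau)F)^h\|_{L^2}\le Ce^{-c(t-\tau)}\|\na^k F\|_{H^{-1}}$
or similar, and then control $\|F\|_{H^{k-1}}$ by the energy bound \eqref{energy-thm-01}
and the decay rates \eqref{n1h1}, \eqref{n1h2} already established for $(n,v)$.

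Concretely, I would first record the nonlinear estimates: using
$\widetilde S_1=-\frac{\mu_1\gamma}{\mu}\dive[(n+\bar\rho)v]$ and the product/Hardy
inequalities (exactly as in the proof of Lemma \ref{lowfrequency}), one gets
$\|\na^m \widetilde S_1\|_{L^2}+\|\na^m \widetilde S_2\|_{L^2}\le C\delta(1+\tau)^{-5/4-m/2}$
for the relevant range of $m$, and $\|\widetilde S_i\|_{L^1}\le C\delta(1+\tau)^{-5/4}$.
These are the inputs; the key point is that every term in $\widetilde S_1,\widetilde S_2$
carries either a factor of $(n,v)$ (hence a factor $\delta$ via smallness and a decay
factor via \eqref{n1h1}) or a factor of $\bar\rho$ (hence a factor $\delta$ via
\eqref{density-control}), so a clean $\delta$ can always be extracted. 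Then, for
$k=0$: $\int_0^{t/2}(1+t-\tau)^{-3/4}\|F\|_{L^1}\,d\tau
\le C\delta\int_0^{t/2}(1+t-\tau)^{-3/4}(1+\tau)^{-5/4}\,d\tau\le C\delta(1+t)^{-3/4}$
by Lemma \ref{tt2}, and $\int_{t/2}^t$ is handled by putting the exponentially decaying
semigroup on the high part and $(1+t-\tau)^{-3/4}$ on the low part against
$\|F(\tau)\|_{L^1}\le C\delta(1+\tau)^{-5/4}\le C\delta(1+t)^{-5/4}$, giving again
$C\delta(1+t)^{-3/4}$; the case $k=1$ is identical with exponents shifted by $1/2$,
using $(1+t-\tau)^{-5/4}$ and $\int_{t/2}^t(1+t-\tau)^{-5/4}\,d\tau\le C$.

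The main obstacle I anticipate is the high-frequency, late-time piece
$\int_{t/2}^t S(t-\tau)F(\tau)\,d\tau$ restricted to $|\xi|\ge\eta$: there the only
gain is the exponential factor $e^{-c(t-\tau)}$ from the dissipative part of the
linearized semigroup, so I must bound $\|\na^k F(\tau)\|_{L^2}$ (not just $\|F\|_{L^1}$)
and then absorb the time integral. The subtlety is that $\widetilde S_2$ contains the
term $\widetilde f(n+\bar\rho)(\mu_1\triangle v+\mu_2\na\dive v)$, which at spatial
order $k$ costs $\na^{k+2}v$; this is fine for $k=0$ using
$\|\na^{k+1}F\|_{H^{-1}}$ and the dissipation integral in \eqref{energy-thm-01}, but
one must be careful with the weighted terms $\h(n,\bar\rho)\na\bar\rho$, where Hardy's
inequality (Lemma \ref{hardy}) together with \eqref{density-control} is needed to trade
the growth of $\bar\rho$ against $n/(1+|x|)^k$. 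Once these nonlinear high-frequency
bounds are in place, $\int_{t/2}^t e^{-c(t-\tau)}(1+\tau)^{-3/4-k/2}\,d\tau\le
C(1+t)^{-3/4-k/2}$ closes the estimate. Collecting the low- and high-frequency
contributions over $[0,t/2]$ and $[t/2,t]$ yields \eqref{nuddecay} with
$\widetilde C$ proportional to the constants from Lemma \ref{tt2}, \eqref{linearlow}
and the energy/decay estimates, and independent of time.
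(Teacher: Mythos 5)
Your proposal is correct in substance and rests on the same two pillars as the paper's proof: Duhamel's formula for $(n_\d,v_\d)$ and the nonlinear source estimates $\|(\widetilde S_1,\widetilde S_2)\|_{L^1}\le C\delta(1+\tau)^{-5/4}$, $\|\na^k(\widetilde S_1,\widetilde S_2)\|_{L^2}\le C\delta(1+\tau)^{-5/4-k/2}$, obtained exactly as you describe (every term carries either a factor of $(n,v)$ with its decay from \eqref{n1h1}, or a factor of $\bar\rho$ controlled via Hardy's inequality and \eqref{density-control}). The difference is in how the linear decay is packaged. The paper does not split frequencies at all: it applies the single combined estimate $\|\na^k S(t-\tau)\phi\|_{L^2}\le C(1+t-\tau)^{-3/4-k/2}(\|\phi\|_{L^1}+\|\na^k\phi\|_{L^2})$ to the whole Duhamel integrand and then invokes Lemma \ref{tt2} once (which performs the $t/2$ split internally), giving a three-line conclusion. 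Your explicit low/high-frequency decomposition with exponential decay of $S(t)$ on $\{|\xi|\ge\eta\}$ reproves this combined estimate by hand; it works, but buys nothing here, since the exponential high-frequency decay is already absorbed into the algebraic rate of the combined bound. One caveat on your high-frequency piece: the bound $\|\na^k(S(t-\tau)F)^h\|_{L^2}\le Ce^{-c(t-\tau)}\|\na^kF\|_{H^{-1}}$ overstates the smoothing — the density component of the linearized system is hyperbolic at high frequencies and the semigroup gains no derivatives there, so you must pay $\|\na^kF\|_{L^2}$ in full. This is harmless for $k=0,1$ because $\|\na F\|_{L^2}$ only costs $\na^3 v$ and $\na^2 n$, which are controlled by \eqref{n1h1} since $N\ge3$, but the $H^{-1}$ gain should not be relied upon.
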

\begin{proof}
	By Duhamel's principle, it holds for $k\geq0$,\beq\label{nvd}
	\|\na^k(n_\d,v_\d)(t)\|_{L^2}\leq \int_0^t(1+t-\tau)^{-\f34-\f k2}\big(\|(\widetilde {S}_1,\widetilde {S}_2)(\tau)\|_{L^1}+\|\na^k(\widetilde {S}_1,\widetilde {S}_2)(\tau)\|_{L^2}\big)d\tau.
	\eeq
	It then follows from decay estimates \eqref{SS1} and \eqref{SS2} that\beq\label{s1s2}
	\|(\widetilde {S}_1,\widetilde {S}_2)\|_{L^1}\leq C\d(1+t)^{-\f54}.
	\eeq
In view of Sobolev and Hardy inequality, it is easy to deduce that
\begin{equation}\label{ds1}
\|\widetilde {S}_1\|_{L^2}
\leq C\|\bar\rho\|_{L^\infty}\|\na n\|_{L^2}
     +C\|(1+|x|)\na\bar\rho\|_{L^\infty}\|\f{n}{1+|x|}\|_{L^2}
\leq C\delta\|\na n\|_{L^2}\leq C\d(1+t)^{-\f54},
\end{equation}
and
\begin{equation}\label{ds2}
\begin{aligned}
\|\widetilde {S}_2\|_{L^2}
\leq& C\|v\|_{L^\infty}\|\na v\|_{L^2}
      +\|(n+\bar\rho)\|_{L^\infty}\|(\na^2 v,\na n)\|_{L^2}
      +\|\f{n}{1+|x|}\|_{L^\infty}\|(1+|x|)\na \bar\rho\|_{L^2}\\
\leq& C\delta\|(\na n,\na v,\na^2v)\|_{L^2}\leq C\delta(1+t)^{-\f54}.
\end{aligned}
\end{equation}
Then, substituting the decay estimates \eqref{s1s2}, \eqref{ds1}
and \eqref{ds2} into \eqref{nvd} with $k=0$, it holds
\beno
\begin{split}
\|(n_\d,v_\d)(t)\|_{L^2}\leq& \int_0^t(1+t-\tau)^{-\f34}\big(\|(\widetilde {S}_1,\widetilde {S}_2)(\tau)\|_{L^1}+\|(\widetilde {S}_1,\widetilde {S}_2)(\tau)\|_{L^2}\big)d\tau\\
		\leq& C\d \int_0^t(1+t-\tau)^{-\f34}(1+\tau)^{-\f54}d\tau\\
		\leq& C\d(1+t)^{-\f34},
\end{split}
\eeno
where we have used the Lemma \ref{tt2} in the last inequality.
Similar to \eqref{ds1} and \eqref{ds2}, one arrives at
\beno
	\begin{split}
		\|\na\widetilde {S}_1\|_{L^2}\leq& C\|\bar\rho\|_{L^\infty}\|\na^2 n\|_{L^2}+C\|(1+|x|)\na\bar\rho\|_{L^\infty}\|\f{\na n}{1+|x|}\|_{L^2}+C\|(1+|x|)^2\na^2\bar\rho\|_{L^\infty}\|\f{n}{(1+|x|)^2}\|_{L^2}\\
		\leq& C\delta\|\na^2 n\|_{L^2}\leq C\d(1+t)^{-\f74},\\
		\|\na\widetilde {S}_2\|_{L^2}\leq& C\|v\|_{L^\infty}\|\na^2 v\|_{L^2}+\|\na v\|_{L^3}\|\na v\|_{L^6}+\|n+\bar\rho\|_{L^\infty}\|(\na^3 v,\na^2 n)\|_{L^2}+\|\na n\|_{L^3}\|(\na^2v,\na n)\|_{L^6}\\
		&+\|(1+|x|)\na\bar\rho\|_{L^\infty}\|(\f{\na^2v}{1+|x|},\f{\na n}{1+|x|})\|_{L^2}+\|(1+|x|)\na \bar\rho\|_{L^\infty}^2\|\f{ n}{(1+|x|)^2}\|_{L^2}\\
		\leq& C\delta\|(\na^2 n,\na^2 v,\na^3v)\|_{L^2}\leq C\delta(1+t)^{-\f74},
	\end{split}
	\eeno
which, together with \eqref{nvd}, \eqref{s1s2} and Lemma \ref{tt2},
yields directly
\beno
	\begin{split}
		\|\na(n_\d,v_\d)(t)\|_{L^2}\leq& \int_0^t(1+t-\tau)^{-\f54}\big(\|(\widetilde {S}_1,\widetilde {S}_2)(\tau)\|_{L^1}+\|\na(\widetilde {S}_1,\widetilde {S}_2)(\tau)\|_{L^2}\big)d\tau\\
		\leq&C\d\int_0^t(1+t-\tau)^{-\f54}(1+\tau)^{-\f54}d\tau\leq C\d(1+t)^{-\f54}.
	\end{split}
	\eeno
	Therefore, the proof of this lemma is completed.
\end{proof}
Finally, we establish the lower bound of decay rate for the global solution and its spatial derivatives of compressible Navier-Stokes equations with potential force.
Then, the estimate \eqref{lower-estimate} in Lemma \ref{lemmalower} below will
yield the optimal decay estimate in Theorem \ref{them4}.
\begin{lemm}\label{lemmalower}
Under the assumptions of Theorem \ref{them4}, then for any large enough $t$, we have
\beq\label{lower-estimate}
\min\{\|\na^kn(t)\|_{L^2},\|\na^kv(t)\|_{L^2} \}
\geq c_1(1+t)^{-\f34-\f k2},\quad \text{for}~~ 0\leq k\leq N,
\eeq
where $c_1$ is a positive constant independent of time.
\end{lemm}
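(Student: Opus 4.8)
The plan is to prove \eqref{lower-estimate} in three stages: first the base case $k=0$ via the linear/nonlinear splitting, then $k=1$ by the entirely analogous argument, and finally $2\le k\le N$ by a Sobolev interpolation that plays the $k=1$ lower bound off against the $k=0$ upper bound.

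For $k=0$ I would write $n=\widetilde n+n_\d$ and $v=\widetilde v+v_\d$, where $(\widetilde n,\widetilde v)$ solves the linearized problem \eqref{linear} with the same initial data $(n_0,v_0)$ and $(n_\d,v_\d)$ solves \eqref{ns7}, so that $\|n(t)\|_{L^2}\ge\|\widetilde n(t)\|_{L^2}-\|n_\d(t)\|_{L^2}$ and similarly for $v$. Proposition \ref{lamma-lower}, applicable since $M_n=\int_{\R^3}n_0\,dx$ and $M_v=\int_{\R^3}v_0\,dx$ are not both zero, yields $\min\{\|\widetilde n(t)\|_{L^2},\|\widetilde v(t)\|_{L^2}\}\ge\widetilde c(1+t)^{-\f34}$ for all sufficiently large $t$, with $\widetilde c>0$ depending only on $M_n,M_v$; the difference estimate \eqref{nuddecay} with $k=0$ gives $\|(n_\d,v_\d)(t)\|_{L^2}\le\widetilde C\d(1+t)^{-\f34}$, the constant $\widetilde C$ being independent of $\d$ (it arises from the quadratic and potential structure of $\widetilde S_1,\widetilde S_2$ combined with the already established decay \eqref{n1h1}). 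Fixing $\widetilde c$ from the data and then shrinking $\delta_0$ so that $\widetilde C\d\le\f12\widetilde c$ whenever $\d\le\delta_0$, we obtain $\min\{\|n(t)\|_{L^2},\|v(t)\|_{L^2}\}\ge\f12\widetilde c(1+t)^{-\f34}$, which is \eqref{lower-estimate} for $k=0$.

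The case $k=1$ is handled verbatim, now using Proposition \ref{lamma-lower} and \eqref{nuddecay} with $k=1$, and gives $\min\{\|\na n(t)\|_{L^2},\|\na v(t)\|_{L^2}\}\ge(\widetilde c-\widetilde C\d)(1+t)^{-\f54}\ge\f12\widetilde c(1+t)^{-\f54}$ for large $t$. For $2\le k\le N$ I would invoke the Sobolev interpolation inequality \eqref{Sobolev} of Lemma \ref{inter} with $l=0$ and $m=k$, which reads $\|\na f\|_{L^2}\le C\|f\|_{L^2}^{1-1/k}\|\na^kf\|_{L^2}^{1/k}$ and rearranges to
\beno
\|\na^kf(t)\|_{L^2}\ge C^{-k}\|\na f(t)\|_{L^2}^{k}\|f(t)\|_{L^2}^{-(k-1)},\qquad f\in\{n,v\}.
\eeno
Inserting the $k=1$ lower bound just obtained together with the upper bound $\|f(t)\|_{L^2}\le\bar C_0(1+t)^{-\f34}$ from \eqref{kdecay}, the powers combine as $-\f{5k}{4}+\f{3(k-1)}{4}=-\f34-\f k2$, hence $\|\na^kf(t)\|_{L^2}\ge c_1(1+t)^{-\f34-\f k2}$ for large $t$; taking $c_1$ to be the minimum of the finitely many constants produced yields \eqref{lower-estimate} for all $0\le k\le N$.

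The one point requiring care is the smallness bookkeeping in the base case: the argument works only because the constant $\widetilde C$ in \eqref{nuddecay} does not depend on the smallness parameter $\d$, so that after the initial data (and therefore $\widetilde c$) have been fixed one may genuinely absorb $\widetilde C\d$ into $\f12\widetilde c$ by further restricting $\delta_0$. Everything else is a routine combination of the linear lower bound of Proposition \ref{lamma-lower}, the difference estimate \eqref{nuddecay}, and the interpolation inequality.
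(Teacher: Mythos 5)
Your proposal is correct and follows essentially the same route as the paper's proof: the triangle inequality $\|\na^k n\|_{L^2}\ge\|\na^k\widetilde n\|_{L^2}-\|\na^k n_\d\|_{L^2}$ combined with Proposition \ref{lamma-lower} and \eqref{nuddecay} for $k=0,1$ (absorbing $\widetilde C\d$ into $\tfrac12\widetilde c$ by shrinking $\delta$), followed by the interpolation $\|\na^k f\|_{L^2}\ge C\|\na f\|_{L^2}^{k}\|f\|_{L^2}^{-(k-1)}$ for $2\le k\le N$. The exponent bookkeeping $-\tfrac{5k}{4}+\tfrac{3(k-1)}{4}=-\tfrac34-\tfrac k2$ matches the paper exactly, and your remark on the $\d$-independence of $\widetilde C$ is precisely the point the paper also emphasizes.
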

\begin{proof}
By virtue of the definition of $n_\d$, it holds true
\beno
\|\na^k\widetilde n\|_{L^2}\leq \|\na^kn\|_{L^2}+\|\na^kn_\d\|_{L^2},
\eeno
which, together with the  lower bound decay \eqref{linearnudecay} and upper bound decay \eqref{nuddecay}, yields directly
\beq\label{lowlow-01}
\|\na^kn\|_{L^2}\geq \|\na^k\widetilde n\|_{L^2}-\|\na^kn_\d\|_{L^2}
\geq  \widetilde c(1+t)^{-\f34-\f k2}-\widetilde C\d (1+t)^{-\f34-\f k2},
\eeq
where $k=0,1$. It is worth noting that the small constant $\delta$ is used
to control the upper bound of initial data in $L^2-$norm instead of $L^1$ one
(see \eqref{phik} and \eqref{initial-H2}).
From the estimate \eqref{linearnudecay} in Lemma \ref{lamma-lower},
the constant $\widetilde c$ in \eqref{lowlow-01} only depends on the quantities $M_n$ and $M_v$.
Then, we can choose $\delta$ small enough such that
$\widetilde C \d\leq\f{1}{2} \widetilde c$, and hence,
it follows from \eqref{lowlow-01} that
\beq\label{lowlow}
\|\na^k n\|_{L^2} \geq  \frac12\widetilde c(1+t)^{-\f34-\f k2}, \quad k=0,1.
\eeq
By virtue of the Sobolev interpolation inequality in Lemma \ref{inter},
it holds true for $k\geq2$
\beno
\|\na n\|_{L^2}\leq C\|n\|_{L^2}^{1-\f1k}\|\na^kn\|_{L^2}^{\f1k},
\eeno
which, together with the lower bound decay \eqref{lowlow}
and upper bound decay \eqref{kdecay}, implies directly
\beq\label{highlow}
\|\na^kn\|_{L^2}
\geq C\|\na n\|_{L^2}^k\|n\|_{L^2}^{-(k-1)}
\geq C(1+t)^{-\f{5k}{4}}(1+t)^{\f{3(k-1)}{4}}
\geq c_1(1+t)^{-\f34-\f k2},
\eeq
for all $k \geq 2$.
In the same manner, it is easy to deduce that
\beq\label{vhighlow}
\|\na^kv\|_{L^2}\geq c_1(1+t)^{-\f34-\f k2},\quad \text{for}~~k \geq 0.
\eeq
Then, the combination of estimates \eqref{lowlow}, \eqref{highlow}
and \eqref{vhighlow} yields the estimate \eqref{lower-estimate}.
Therefore, we complete the proof of this lemma.
\end{proof}






\section*{Acknowledgements}

This research was partially supported by NNSF of China(11801586, 11971496, 12026244), Guangzhou Science and technology project of China(202102020769), Natural Science Foundation of Guangdong Province of China (2020A1515110942),
National Key Research and Development Program of China(2020YFA0712500).

\phantomsection
\addcontentsline{toc}{section}{\refname}

\end{document}